%%%%%%%%%%%%%%%%%%%%%%%%%%%%%%%%%%%%%%%%%%
% M. Kalantar and M. Kennedy
% Boundaries of reduced C*-algebras of discrete groups
%%%%%%%%%%%%%%%%%%%%%%%%%%%%%%%%%%%%%%%%%%
\documentclass[12pt]{amsart}
\usepackage{amsmath}
\usepackage{amstext}
\usepackage{amsfonts}
\usepackage{amssymb}
\usepackage{amsthm}
\usepackage{amsrefs}

\usepackage{microtype}
\usepackage{hyperref}

\usepackage{tikz}
\usetikzlibrary{arrows}
\usetikzlibrary{positioning}

%     normal style theorems,
%     numbered within the section
\theoremstyle{plain}
\newtheorem{thm}{Theorem}[section]
\newtheorem{conj}[thm]{Conjecture}
\newtheorem{cor}[thm]{Corollary}
\newtheorem{prop}[thm]{Proposition}
\newtheorem{lem}[thm]{Lemma}

%     Theorem style with roman text
%     numbered within section
\theoremstyle{definition}
\newtheorem{defn}[thm]{Definition}

\newtheorem{rem}[thm]{Remark}

%      Proof environment

\numberwithin{equation}{section}

%      Blackboard bold letters
\newcommand{\bC}{{\mathbb{C}}}
\newcommand{\bF}{{\mathbb{F}}}

%      Capital script letters
\newcommand{\A}{{\mathcal{A}}}
\newcommand{\B}{{\mathcal{B}}}

\newcommand{\J}{{\mathcal{J}}}

\newcommand{\M}{{\mathcal{M}}}

\renewcommand{\O}{{\mathcal{O}}}
\renewcommand{\P}{{\mathcal{P}}}
\renewcommand{\S}{{\mathcal{S}}}
\newcommand{\T}{{\mathcal{T}}}
\newcommand{\U}{{\mathcal{U}}}

%Roman letters for math

%Greek Letters

\renewcommand{\phi}{\varphi}

%Fraktur letters

%      Text used in equations

%Operators

%      Useful shortforms
\newcommand{\rcp}[2]{{#1} \rtimes_\mathrm{r} {#2}}

\newcommand{\ca}{\mathrm{C}^*}

\newcommand{\fb}{\partial_F G}
\newcommand{\hb}{\partial_H G}

%%%%%%%%%%%%%%%%%%%%%%%%%%%%%%%%%%%%%%%%%%
\begin{document}
\title[Boundaries of reduced C*-algebras of discrete groups]{Boundaries of reduced C*-algebras of discrete groups}

\author[M. Kalantar]{Mehrdad Kalantar}
\address{Department of Mathematics and Statistics\\ Carleton University\\
Ottawa, ON \; K1S 5B6 \\Canada}
\email{mkalanta@math.carleton.ca}

\author[M. Kennedy]{Matthew Kennedy}
\address{Department of Mathematics and Statistics\\ Carleton University\\
Ottawa, ON \; K1S 5B6 \\Canada}
\email{mkennedy@math.carleton.ca}

\begin{abstract}
For a discrete group $G$, we consider the minimal C*-subalgebra of $\ell^\infty(G)$ that arises as the image of a unital positive $G$-equivariant projection. This algebra always exists and is unique up to isomorphism. It is trivial if and only if $G$ is amenable. We prove that, more generally, it can be identified with the algebra $C(\fb)$ of continuous functions on Furstenberg's universal $G$-boundary $\fb$.

This operator-algebraic construction of the Furstenberg boundary has a number of interesting consequences. We prove that $G$ is exact precisely when the $G$-action on $\fb$ is amenable, and use this fact to prove Ozawa's conjecture that if $G$ is exact, then there is an embedding of the reduced C*-algebra $\ca_r(G)$ of $G$ into a nuclear C*-algebra which is contained in the injective envelope of $\ca_r(G)$.

The algebra $C(\fb)$ arises as an injective envelope in the sense of Hamana, which implies rigidity results for certain $G$-equivariant maps. We prove a generalization of a rigidity result of Ozawa for $G$-equivariant maps between spaces of functions on the hyperbolic boundary of a hyperbolic group. Our result applies to hyperbolic groups, but also to groups that are not hyperbolic or even relatively hyperbolic, including certain mapping class groups.

It is a longstanding open problem to determine which groups are C*-simple, in the sense that the algebra $\ca_r(G)$ is simple. We prove that this problem can be reformulated as a problem about the structure of the $G$-action on the Furstenberg boundary. Specifically, we prove that a discrete group $G$ is C*-simple if and only if the $G$-action on the Furstenberg boundary is topologically free. We apply this result to prove that Tarski monster groups are C*-simple. This provides another solution to a problem of de la Harpe (recently answered by Olshanskii and Osin) about the existence of C*-simple groups with no free subgroups.
\end{abstract}

\subjclass[2010]{Primary 46L35; Secondary 20F65, 37A20, 43A07}
\keywords{discrete group, Furstenberg boundary, exact group, amenable action, reduced C*-algebra, C*-simple group}
\thanks{Second author partially supported by research grant from NSERC (Canada).}
\maketitle

%%%%%%%%%%%%%%%%%%%%%%%%%%%%%%%%%%%%%%%%%%
\section{Introduction}
%%%%%%%%%%%%%%%%%%%%%%%%%%%%%%%%%%%%%%%%%%

A discrete group $G$ is said to be \emph{amenable} if there is a left-invariant mean on the algebra $\ell^\infty(G)$, i.e. a unital positive linear map $\lambda : \ell^\infty(G) \to \bC$ that is invariant with respect to the action of $G$ on $\ell^\infty(G)$ by left translation. By identifying the algebra $\bC$ with a subalgebra of $\ell^\infty(G)$, the map $\lambda$ can be viewed as a unital positive $G$-equivariant projection. From this perspective, a group $G$ is non-amenable if the algebra $\bC$ is ``too small,'' relative to $G$, to admit such a projection.

In this paper, we consider a natural generalization of the notion of amenability. Specifically, for an arbitrary discrete group $G$, we consider the minimal C*-subalgebra of $\ell^\infty(G)$ that arises as the image of a positive $G$-equivariant projection on $\ell^\infty(G)$. It follows from Hamana's work on $G$-injective envelopes \cites{Ham78,Ham85} that this algebra exists and is unique up to isomorphism.  We prove that it can be identified with the algebra $C(\fb)$ of continuous functions on Furstenberg's universal $G$-boundary $\fb$.

This operator-algebraic viewpoint provides a powerful tool for the study of Furstenberg boundaries of discrete groups which, generally speaking, are not as well-studied as their counterparts for connected Lie groups. Using operator-algebraic techniques, we obtain a number of results about the structure of the Furstenberg boundary of a discrete group that appear to be new. On the other hand, we also show that the $G$-action on the Furstenberg boundary encodes a great deal of information about the operator-algebraic and ergodic-theoretic properties of $G$.

We recall that a discrete group $G$ is \emph{exact} if the reduced C*-algebra $\ca_r(G)$ of $G$ is exact. This notion, introduced by Kirchberg and Wasserman in \cite{KW99} has several equivalent formulations. Ozawa proved in  \cite{Oza00}*{Theorem 3} that $G$ is exact if and only the $G$-action on the Stone-Cech compactification $\beta G$ of $G$ is amenable. We prove the exactness of $G$ can also be detected by the amenability of the $G$-action on the Furstenberg boundary $\fb$.

\begin{thm} \label{main-thm:exact-iff-amenable}
Let $G$ be a discrete group. Then $G$ is exact if and only if the $G$-action on the Furstenberg boundary $\fb$ is amenable.
\end{thm}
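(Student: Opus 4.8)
The plan is to dualize the structure supplied by the identification $C(\fb) \subseteq \ell^\infty(G)$ and then transport topological amenability across the resulting map. Since $C(\fb)$ is identified with a unital $G$-invariant C*-subalgebra of $\ell^\infty(G) = C(\beta G)$, Gelfand duality turns the inclusion $C(\fb) \hookrightarrow C(\beta G)$ into a $G$-equivariant continuous surjection $p : \beta G \to \fb$; this single map drives the whole argument. The construction also furnishes a $G$-equivariant unital positive projection $\phi : \ell^\infty(G) \to C(\fb)$, which by Tomiyama's theorem is a conditional expectation onto $C(\fb)$. The other ingredient is Ozawa's theorem, quoted in the excerpt: $G$ is exact if and only if $G \acts \beta G$ is amenable. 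It therefore suffices to show that $G \acts \beta G$ is amenable if and only if $G \acts \fb$ is amenable, and for this I would exploit $p$ and $\phi$.

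For the reverse implication I would use the easy permanence property that amenability pulls back along equivariant maps. If $G \acts \fb$ is amenable, witnessed by a net $(\mu_i)$ of continuous maps $\fb \to \operatorname{Prob}(G)$ with $\sup_{y}\|s\cdot\mu_i(y) - \mu_i(sy)\|_1 \to 0$ for each $s \in G$, then the maps $\mu_i \circ p : \beta G \to \operatorname{Prob}(G)$ are continuous, and equivariance of $p$ gives $\sup_{x}\|s\cdot\mu_i(p(x)) - \mu_i(p(sx))\|_1 = \sup_{x}\|s\cdot\mu_i(p(x)) - \mu_i(s\,p(x))\|_1 \le \sup_{y}\|s\cdot\mu_i(y) - \mu_i(sy)\|_1 \to 0$. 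Hence $G \acts \beta G$ is amenable, and $G$ is exact by Ozawa's theorem.

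The forward implication is where the work lies, and it is the main obstacle: one wants to descend amenability from $\beta G$ to its equivariant image $\fb$, but one cannot simply push the net $(\mu_i)$ forward along $p$, since $p$ admits no continuous section and there is no canonical way to average over its fibres. Instead I would route through crossed products. If $G$ is exact, then $G \acts \beta G$ is amenable, so $\rcp{\ell^\infty(G)}{G}$ is nuclear. The equivariant conditional expectation $\phi$ induces a conditional expectation $\tilde\phi : \rcp{\ell^\infty(G)}{G} \to \rcp{C(\fb)}{G}$, and the image of a conditional expectation out of a nuclear C*-algebra is again nuclear (conjugate a completely positive approximating net by $\tilde\phi$ and the inclusion). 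Thus $\rcp{C(\fb)}{G}$ is nuclear, and since $C(\fb)$ is commutative, nuclearity of $\rcp{C(\fb)}{G}$ is equivalent to amenability of $G \acts \fb$; this gives the remaining implication.

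In summary, the formal skeleton is: realize the boundary as the equivariant factor $p : \beta G \to \fb$, apply Ozawa's theorem at both ends, and bridge the two actions by pull-back of measures on one side and by the induced conditional expectation on reduced crossed products on the other. The only genuinely technical point is the forward direction, where the absence of a continuous section forces the passage to crossed products; the conditional expectation $\phi$ coming from the construction of $\fb$ is exactly what makes this passage work, so I expect the proof to hinge on verifying that $\phi$ descends to a conditional expectation between the reduced crossed products together with the standard equivalence between amenability of $G \acts X$ and nuclearity of $\rcp{C(X)}{G}$.
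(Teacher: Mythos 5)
Your argument is correct, but the forward (hard) direction takes a genuinely different route from the paper's. The paper stays entirely at the level of topological dynamics: from the $G$-equivariant u.c.p.\ projection $\psi\colon \ell^\infty(G)\to C(\fb)$ it forms the adjoint and restricts to point masses to get a continuous $G$-equivariant map $\alpha\colon \fb\to\P(\beta G)$, invokes the lemma (quoted from Caprace--Monod and Higson) that amenability of $G\acts X$ passes to $G\acts\P(X)$, and then pulls the amenability witness back along $\alpha$ --- exactly the mechanism you use in your easy direction, applied in reverse to sidestep the absence of a section of $p$. You instead route through operator algebras: nuclearity of $\rcp{\ell^\infty(G)}{G}$, the induced conditional expectation $\tilde\phi$ onto $\rcp{C(\fb)}{G}$, the CPAP argument showing that the range of a conditional expectation out of a nuclear C*-algebra is nuclear, and finally the converse direction of the equivalence between amenability of an action on a compact space and nuclearity of the reduced crossed product. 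All of these steps are standard and correct (in particular the induced expectation on reduced crossed products and the equivalence for commutative coefficient algebras are both in Brown--Ozawa, Theorem 4.3.4 and the surrounding material), so your proof goes through. The trade-off is that your approach replaces one nontrivial input (amenability of $G\acts X$ implies amenability of $G\acts\P(X)$) with another (nuclearity of $\rcp{C(X)}{G}$ implies amenability of $G\acts X$); the paper's version is lighter and, notably, its dual-map trick is precisely the ``averaging over fibres'' you thought was unavailable: one does not need a section of $p$, only an equivariant map from $\fb$ into probability measures on $\beta G$, which the projection $\psi$ provides for free.
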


Wasserman \cite{Was90} proved that a C*-algebra is exact if it can be embedded into a nuclear C*-algebra, and Kirchberg \cite{Kir95} proved the converse, that every exact C*-algebra can be embedded into a nuclear C*-algebra. A deep result of Kirchberg and Phillips \cite{KP00} implies that for a separable exact C*-algebra, the nuclear C*-algebra can be taken to be the Cuntz algebra on two generators.

Although concrete nuclear embeddings have been constructed for exact C*-algebras in certain special cases (see e.g. \cites{Cho79,KN11}), in general the nuclear embeddings that are guaranteed to exist by the more abstract results above can be difficult to realize. Additionally, the image of an exact C*-algebra under a nuclear embedding can be relatively small, and consequently, properties like simplicity and primeness are not necessarily reflected in the larger C*-algebra.

Ozawa \cite{Oza07} considered the problem of constructing a more ``tight'' nuclear embedding of the reduced C*-algebra $\ca_r(\bF_n)$ of the free group $\bF_n$ on $n$ generators. This C*-algebra is known to be exact. Ozawa proved there is a canonical nuclear C*-algebra $N(\ca_r(\bF_n))$ such that 
\[
\ca_r(\bF_n) \subset N(\ca_r(\bF_n)) \subset I(\ca_r(\bF_n)),
\]
where $I(\ca_r(\bF_n))$ denotes the injective envelope of $\ca_r(\bF_n)$.

More generally, Ozawa conjectures it should be possible to construct such an embedding for any exact C*-algebra.

\begin{conj}[Ozawa's Conjecture] \label{conj:ozawa}
Let $\A$ be an exact C*-algebra. Then there is a nuclear C*-algebra $N(\A)$ such that
\[
\A \subset N(\A) \subset I(\A),
\]
where $I(\A)$ denotes the injective envelope of $\A$.
\end{conj}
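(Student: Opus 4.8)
The plan is to reduce Conjecture~\ref{conj:ozawa} to the problem of locating a nuclear C*-subalgebra of $I(\A)$ that contains $\A$, and to produce the approximation data needed for such an algebra by combining exactness with the injectivity of $I(\A)$. Two facts are available. First, $\A$ is exact precisely when, for some (equivalently, any) faithful representation $\A \subset \B(H)$, the inclusion $\A \hookrightarrow \B(H)$ is a \emph{nuclear map}; that is, there are nets of \ucp maps $\phi_i : \A \to M_{n_i}$ and $\psi_i : M_{n_i} \to \B(H)$ with $\psi_i \circ \phi_i \to \mathrm{id}_\A$ in the point-norm topology (Kirchberg and Wasserman). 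Second, since $I(\A)$ is injective we may realize it as $\A \subset I(\A) \subset \B(H)$ together with a \ucp projection $E : \B(H) \to I(\A)$ restricting to the identity on $I(\A)$.

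First I would transport the exactness data into $I(\A)$. Setting $\psi_i' = E \circ \psi_i : M_{n_i} \to I(\A)$, each $\psi_i'$ is \ucp and, because $E$ restricts to the identity on $\A \subset I(\A)$ and is contractive,
\[
\psi_i' \circ \phi_i = E \circ \psi_i \circ \phi_i \longrightarrow E|_\A = \mathrm{id}_\A
\]
in point-norm. Thus the inclusion $\A \hookrightarrow I(\A)$ is itself a nuclear map whose finite-dimensional approximants factor through matrix algebras by maps landing inside $I(\A)$. Moreover $\A$ lies in the multiplicative domain of $E$, so $E$ is an $\A$-bimodule map and the operator systems $\psi_i'(M_{n_i})$, together with $\A$, generate a C*-subalgebra $N \subset I(\A)$ containing $\A$, which is the natural candidate for $N(\A)$.

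The hard part --- and the reason the statement is a conjecture rather than a theorem --- is to arrange that this intermediate C*-algebra $N$ is genuinely nuclear. The maps $\psi_i'$ are only completely positive, not multiplicative, so the approximation above controls $\A$ as an operator system but says nothing about products of elements of $I(\A)$; the C*-algebra they generate need not inherit the completely positive approximation property, and $I(\A)$ itself is injective but typically far from nuclear. In the group case this obstruction is resolved by an external structure: Theorem~\ref{main-thm:exact-iff-amenable} shows that exactness of $G$ makes the $G$-action on the Furstenberg boundary $\fb$ amenable, so the reduced crossed product $\rcp{C(\fb)}{G}$ is nuclear, and one checks that it sits between $\ca_r(G)$ and $I(\ca_r(G))$. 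For a general exact $\A$ there is no crossed-product model to supply such an algebra, and the main task is to manufacture one intrinsically --- for instance by producing, for each exact $\A$, a canonical amenable boundary action or groupoid whose reduced crossed product realizes the required nuclear C*-algebra inside $I(\A)$. I expect this construction of the nuclear intermediate algebra, rather than the approximation estimates, to be the decisive obstacle.
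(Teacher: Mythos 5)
Your overall assessment is the right one: the statement you were handed is Conjecture~\ref{conj:ozawa}, and the paper does not prove it in this generality either. The paper's actual contribution (Theorem~\ref{thm:nuclear-embedding}) is exactly the special case $\A = \ca_r(G)$ for a discrete exact group $G$, resolved by the route you describe in your third paragraph: exactness of $G$ is shown to be equivalent to amenability of the $G$-action on $\fb$ (Theorem~\ref{cor:action-amenable}), the reduced crossed product $\rcp{C(\fb)}{G}$ is then nuclear by \cite{BroOza08}*{Theorem 4.3.4}, and this algebra is taken as $N(\ca_r(G))$. So your diagnosis --- that the decisive obstacle in general is manufacturing a nuclear intermediate algebra, for which the group case has an external boundary model and a general exact $\A$ does not --- matches the state of affairs in the paper.

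Two corrections to your sketch are worth recording. First, the step you wave at with ``one checks that it sits between $\ca_r(G)$ and $I(\ca_r(G))$'' is not a routine verification: the paper invokes Hamana's theorem \cite{Ham85}*{Theorem 3.4}, which, combined with the identification $C(\fb) = I_G(\bC)$, gives $\ca_r(G) = \rcp{\bC}{G} \subset \rcp{C(\fb)}{G} \subset I(\rcp{\bC}{G}) = I(\ca_r(G))$; this compatibility of $G$-injective envelopes with reduced crossed products is the genuine second ingredient (alongside boundary amenability), and it is also what makes the embedding canonical and yields the simplicity and primeness assertions via \cite{Ham85}*{Corollary 3.5}. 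Second, your transported approximation $\psi_i' \circ \phi_i \to \mathrm{id}_\A$ is correct but carries no new information: nuclearity of the inclusion of an exact $\A$ into any C*-algebra containing it is automatic, and, as you yourself note, the C*-subalgebra of $I(\A)$ (with its Choi--Effros product) generated by $\A$ and the operator systems $\psi_i'(M_{n_i})$ has no reason to be nuclear --- injective algebras such as $I(\A)$ are typically far from nuclear, and completely positive approximation of $\A$ as an operator system does not control products in the generated subalgebra. So nothing in your first two paragraphs advances the conjecture beyond its hypotheses; the honest content of your proposal is the (accurate) reduction of the group case to boundary amenability plus the Hamana sandwich, which is the paper's proof.
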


We prove Conjecture \ref{conj:ozawa} for the reduced C*-algebra of every discrete exact group.

\begin{thm} \label{main-thm:exact-nuclear-embedding}
Let $G$ be a discrete exact group, and let $\ca_r(G)$ denote the reduced C*-algebra of $G$. There is a canonical unital nuclear C*-algebra $N(\ca_r(G))$ such that
\[
\ca_r(G) \subset N(\ca_r(G)) \subset I(\ca_r(G)),
\]
where $I(\ca_r(G))$ denotes the injective envelope of $\ca_r(G)$. The algebra $N(\ca_r(G))$ is simple if $\ca_r(G)$ is simple, and prime if and only if $\ca_r(G)$ is prime.
\end{thm}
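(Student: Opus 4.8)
The plan is to take $N(\ca_r(G))$ to be (the image of) the reduced crossed product $\rcp{C(\fb)}{G}$, realized concretely inside $I(\ca_r(G))$. Represent $\ca_r(G) \subseteq B(\ell^2(G))$ in the usual way and use Hamana's realization of the injective envelope as $I(\ca_r(G)) = \Phi(B(\ell^2(G)))$, where $\Phi : B(\ell^2(G)) \to B(\ell^2(G))$ is a minimal $\ca_r(G)$-bimodule u.c.p. projection. Since the unitaries $\lambda_g$ lie in $\ca_r(G)$, the map $\Phi$ commutes with conjugation by each $\lambda_g$; restricting $\Phi$ to the diagonal subalgebra $\ell^\infty(G)$, on which conjugation by $\lambda_g$ implements left translation, yields a $G$-equivariant u.c.p. map $\ell^\infty(G) \to I(\ca_r(G))$. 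First I would restrict further to $C(\fb) \subseteq \ell^\infty(G)$ to obtain a $G$-equivariant map $\iota := \Phi|_{C(\fb)} : C(\fb) \to I(\ca_r(G))$ and argue that $\iota$ is a $*$-embedding: because $C(\fb) = I_G(\bC)$ is $G$-rigid and $G$-essential over $\bC$, and $\iota$ restricts to the isometric unital inclusion on $\bC$, rigidity forces $\iota$ to be a complete order embedding, and since $C(\fb)$ is commutative and generated by its projections this embedding is multiplicative. The bimodule identity $\Phi(\lambda_g x \lambda_g^*) = \lambda_g \Phi(x)\lambda_g^*$ then reads, on $C(\fb)$, as the covariance relation $\lambda_g\,\iota(f)\,\lambda_g^* = \iota(g \cdot f)$, so $(\iota,\lambda)$ is a covariant pair inside $I(\ca_r(G))$.

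Since $G$ is exact, Theorem \ref{main-thm:exact-iff-amenable} gives that the $G$-action on $\fb$ is amenable, so the full and reduced crossed products coincide and $\rcp{C(\fb)}{G}$ is nuclear. The covariant pair $(\iota,\lambda)$ therefore integrates to a $*$-homomorphism $\pi : \rcp{C(\fb)}{G} \to I(\ca_r(G))$, and I would \emph{define} $N(\ca_r(G)) := \pi(\rcp{C(\fb)}{G})$. As the image of a $*$-homomorphism it is a C*-subalgebra of $I(\ca_r(G))$; as a quotient of the nuclear algebra $\rcp{C(\fb)}{G}$ it is nuclear; and since $\pi(\lambda_g) = \lambda_g$ and $\iota(1) = \Phi(1) = 1$, it contains $\ca_r(G) = \ol{\spn}\{\lambda_g : g \in G\}$, with $\pi$ the identity there. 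This yields $\ca_r(G) \subseteq N(\ca_r(G)) \subseteq I(\ca_r(G))$ with $N(\ca_r(G))$ nuclear. Taking the image, rather than claiming $\pi$ is injective, deliberately sidesteps the ideal structure of the crossed product in the non-simple case.

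For the structural statements, primeness is the cleaner one. Since $\ca_r(G) \subseteq N(\ca_r(G)) \subseteq I(\ca_r(G))$ and $I(\ca_r(G))$ is an injective essential extension of $\ca_r(G)$, it is also an injective essential extension of $N(\ca_r(G))$, whence $I(N(\ca_r(G))) = I(\ca_r(G))$. Invoking Hamana's characterization that a unital C*-algebra $\A$ is prime precisely when $Z(I(\A))$ is one-dimensional, I get that $N(\ca_r(G))$ is prime iff $Z(I(N(\ca_r(G)))) = \bC$ iff $Z(I(\ca_r(G))) = \bC$ iff $\ca_r(G)$ is prime. For simplicity, suppose $\ca_r(G)$ is simple; then $G$ is C*-simple, and by the boundary characterization of C*-simplicity the $G$-action on the minimal space $\fb$ is topologically free. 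An amenable, minimal, topologically free action has a simple reduced crossed product, so $\rcp{C(\fb)}{G}$ is simple; hence its nonzero quotient $N(\ca_r(G))$ is isomorphic to it and is simple (and $\pi$ is in fact injective in this case).

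The hard part will be the first step: producing the $G$-equivariant $*$-embedding $\iota$ and, in particular, verifying that $\Phi|_{C(\fb)}$ is genuinely isometric and multiplicative rather than merely u.c.p. This is exactly where the rigidity and essentiality of the $G$-injective envelope $C(\fb) = I_G(\bC)$ must be deployed, and it is the crux on which the covariance relation, and hence the entire construction, depends. By contrast, the nuclearity and the primeness dichotomy are comparatively formal consequences of amenability and of the identity $I(N(\ca_r(G))) = I(\ca_r(G))$, while the simplicity claim reduces to the separate boundary characterization of C*-simplicity together with the standard simplicity criterion for crossed products.
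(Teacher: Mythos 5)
Your overall architecture --- take $N(\ca_r(G)) = \rcp{C(\fb)}{G}$, get nuclearity from amenability of the boundary action via Theorem \ref{main-thm:exact-iff-amenable}, and read off primeness from $I(N(\ca_r(G))) = I(\ca_r(G))$ --- matches the paper's, but there is a genuine gap at exactly the point you flag as the crux. You claim that because $\iota = \Phi|_{C(\fb)}$ is a unital $G$-equivariant complete order embedding and $C(\fb)$ is commutative and generated by its projections, $\iota$ is automatically multiplicative. That inference is false, and the paper itself supplies the counterexamples: by Theorem \ref{thm:rigidity}(1) every unital positive $G$-equivariant map out of $C(\fb)$ is completely isometric, so each Poisson map $P_\nu : C(\fb) \to \ell^\infty(G)$ is a unital $G$-equivariant complete order embedding of this commutative, projection-generated algebra, yet by Proposition \ref{G-equi-copies} it is multiplicative only when $\nu$ is a point mass. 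A complete order embedding sends projections to positive contractions, not necessarily to projections, so the multiplicative-domain argument you are implicitly relying on never gets started. Without multiplicativity of $\iota$ there is no covariant pair, hence no integrated $*$-homomorphism $\pi$, and the construction of $N(\ca_r(G))$ inside $I(\ca_r(G))$ collapses. This is precisely the difficulty the paper confronts in the proof of (2) $\Rightarrow$ (1) of Theorem \ref{thm:c-star-simplicity}, where a $G$-equivariant order embedding of $C(\fb)$ fails to be a $*$-embedding and has to be repaired by passing to the bidual and cutting by a support projection.

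The paper's own proof sidesteps all of this by citing Hamana's Theorem 3.4 of \cite{Ham85}, which already delivers the chain of C*-algebra inclusions $\rcp{\bC}{G} \subset \rcp{I_G(\bC)}{G} \subset I(\rcp{\bC}{G})$, and it obtains both the simplicity and the primeness statements from Corollary 3.5 of the same paper rather than from the C*-simplicity characterization (which only appears later, in Section \ref{sec:C*-sim}). Your remaining steps are essentially sound: nuclearity of a quotient of a nuclear algebra, $I(N(\ca_r(G))) = I(\ca_r(G))$ by essentiality, and the primeness criterion via the center of the injective envelope all go through once the embedding is in hand. But to complete your route you would either have to genuinely prove that the minimal $\ca_r(G)$-bimodule projection $\Phi$ is multiplicative on the chosen copy of $C(\fb)$ with respect to the Choi--Effros product on $I(\ca_r(G))$ --- which amounts to reproving Hamana's theorem --- or simply cite it.
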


Ozawa takes $N(\ca_r(\bF_n))$ to be the reduced crossed product $\rcp{C(\partial \bF_n)}{\bF_n}$, where $\partial \bF_n$ denotes the hyperbolic boundary of $\bF_n$. It is known that the $\bF_n$-action on $\partial \bF_n$ is amenable, and hence that this crossed product is nuclear (see e.g. \cite{BroOza08}*{Theorem 4.3.4}).

For an arbitrary, potentially non-hyperbolic, exact group, Theorem \ref{main-thm:exact-iff-amenable} suggests that the role of the hyperbolic boundary in Ozawa's proof should be played here by the Furstenberg boundary. We take $N(\ca_r(G))$ to be the reduced crossed product $\rcp{C(\fb)}{G}$.

We construct the algebra $C(\fb)$ of continuous functions on $\fb$ as a $G$-injective envelope in the sense of Hamana \cites{Ham78,Ham85}. This construction of the Furstenberg boundary implies some powerful rigidity results for $G$-equivariant maps on $C(\fb)$, and hence by contravariance, for $G$-equivariant maps on $\fb$. However, even the fact that $C(\fb)$ is an injective algebra appears to have gone unnoticed. We make use of these properties throughout our paper.

A key ingredient in Ozawa's paper is a rigidity result \cite{Oza07}*{Proposition 3} for unital positive equivariant maps between spaces of functions on the hyperbolic boundary of a hyperbolic group. We prove a generalization of this result which imposes slightly weaker requirements.

\begin{thm} \label{main-thm:rigidity-maps}
Let $G$ be a non-amenable hyperbolic group and let $\mu$ be an irreducible probability measure on $G$. Let $\nu$ be a $\mu$-stationary measure on the hyperbolic boundary $\partial G$. If $\phi : C(\partial G) \to L^\infty(\partial G, \nu)$ is a unital positive $G$-equivariant map, then $\phi = \operatorname{id}$.
\end{thm}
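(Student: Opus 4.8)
The plan is to combine a barycenter computation, which forces the ``averaged'' values of $\phi$ to agree with those of the inclusion, with the injectivity of the Poisson transform on the boundary, which then upgrades this to pointwise ($\nu$-a.e.) equality. Write $\iota : C(\partial G) \to L^\infty(\partial G, \nu)$ for the canonical inclusion, so that $\phi = \operatorname{id}$ means $\phi(f) = \iota(f)$ for every $f$. I would use two standard dynamical facts about the Gromov boundary of a non-elementary (equivalently non-amenable) hyperbolic group $G$ with nondegenerate (irreducible) $\mu$: first, that the $\mu$-stationary measure $\nu$ on $\partial G$ is \emph{unique}; and second, that $(\partial G,\nu)$ is a $\mu$-boundary, so that the Poisson transform
\[
P_\nu : L^\infty(\partial G, \nu) \to H^\infty(G,\mu), \qquad P_\nu F(g) = \int_{\partial G} F \, d(g_*\nu),
\]
into the bounded $\mu$-harmonic functions is injective. (Here $g_*\nu$ denotes the pushforward; stationarity $\mu * \nu = \nu$ is exactly what makes $P_\nu F$ harmonic, and $\nu$ is quasi-invariant, so the integral is well defined for $F \in L^\infty$.)

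First I would manufacture a new stationary measure from $\phi$. Define a state on $C(\partial G)$ by $f \mapsto \int_{\partial G} \phi(f)\, d\nu$, and let $\sigma$ be the corresponding probability measure on $\partial G$. A short calculation using only $G$-equivariance of $\phi$ and stationarity of $\nu$ shows that $\mu * \sigma = \sigma$, i.e. $\sigma$ is itself $\mu$-stationary; by uniqueness of the stationary measure, $\sigma = \nu$. This yields the barycenter identity
\[
\int_{\partial G} \phi(f)\, d\nu = \int_{\partial G} f\, d\nu \qquad \text{for all } f \in C(\partial G).
\]

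Next I would translate this around the group. Since $\int F\, d(g_*\nu) = \int (g^{-1}\cdot F)\, d\nu$, applying the barycenter identity to $g^{-1}\cdot f$ in place of $f$ and using equivariance of $\phi$ gives
\[
\int_{\partial G} \phi(f)\, d(g_*\nu) = \int_{\partial G} f\, d(g_*\nu) \qquad \text{for all } g \in G,\ f \in C(\partial G);
\]
that is, $P_\nu(\phi(f)) = P_\nu(\iota(f))$ as bounded harmonic functions on $G$. Because $(\partial G,\nu)$ is a $\mu$-boundary, $P_\nu$ is injective, and hence $\phi(f) = \iota(f)$ in $L^\infty(\partial G,\nu)$ for every $f$, i.e. $\phi = \operatorname{id}$.

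The two inputs — uniqueness of the stationary measure and injectivity of the Poisson transform — are where hyperbolicity really enters (via the fact that $\partial G$ is a minimal, strongly proximal $G$-boundary realizing the Poisson boundary), and pinning down the cleanest citable form of these facts valid for the given class of $\mu$ is the main technical point; everything else is the formal argument above. I expect the only delicate bookkeeping to be the verification that $\sigma$ is stationary and that the translation step is carried out with consistent action and pushforward conventions, rather than any genuine difficulty. I would also emphasize that the argument uses only positivity and unitality of $\phi$, never complete positivity, which is precisely what allows the hypotheses to be relaxed relative to Ozawa's original formulation.
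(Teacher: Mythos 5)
Your proposal is correct, and its skeleton coincides with the paper's proof of Theorem \ref{prop3}: both use Kaimanovich's uniqueness of the $\mu$-stationary measure to force $\phi^*\nu=\nu$, propagate this by equivariance to $\langle s\nu,\phi(f)\rangle=\langle s\nu,f\rangle$ for all $s\in G$, and then conclude because the translates $s\nu$ separate the points of $L^\infty(\partial G,\nu)$. The one genuine difference is how that last separation property is justified. The paper first shows $P_\nu$ is isometric by factoring it through $L^\infty(\Pi_\mu,\mu_\infty)\to H^\infty(G,\mu)$ (using that $(\partial G,\nu)$ is a $\mu$-boundary), then invokes Jaworski's theorem to deduce that $\nu$ is strongly approximately transitive, and finally uses the resulting norm-density of the span of $G\nu$ in $L^1(\partial G,\nu)$. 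You instead appeal directly to the injectivity of $P_\nu$ on $L^\infty(\partial G,\nu)$, which by Hahn--Banach is \emph{equivalent} to that density of the span of $G\nu$ in $L^1(\partial G,\nu)$; since injectivity is exactly what the factorization through the Poisson boundary provides, your route bypasses Jaworski's SAT machinery entirely (SAT being the formally stronger assertion that the \emph{convex hull} of $G\nu$ is total-variation dense among the absolutely continuous probability measures, which is more than this argument needs). What the paper's detour buys is the additional structural information that $\nu$ is SAT; what your version buys is one fewer external input. The only bookkeeping point, which you already flag, is that quasi-invariance of $\nu$ (from irreducibility of $\mu$ and stationarity) is needed so that $s\nu\ll\nu$ and the pairings of $s\nu$ with elements of $L^\infty(\partial G,\nu)$ are well defined.
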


The proof of our result is significantly different than Ozawa's. In fact, our techniques also apply to groups that are not necessarily hyperbolic or even relatively hyperbolic, including certain mapping class groups. We utilize Jaworski's  \cite{Jaworski} theory of strongly approximately transitive measures, combined with uniqueness results of Kaimanovich \cite{Kai00} and Kaimanovich-Masur \cite{Kai-Mas96} for stationary measures.

A group $G$ is \emph{C*-simple} if the reduced C*-algebra $\ca_r(G)$ is simple. A great deal of work has been devoted to understanding which groups are C*-simple. It turns out this problem is equivalent to the topological freeness of the $G$-action on the Furstenberg boundary. We prove the following result.

\begin{thm}
Let $G$ be a discrete group, and let $\fb$ denote the Furstenberg boundary of $G$. Then the following are equivalent:
\begin{enumerate}
\item The group $G$ is C*-simple, i.e. the reduced C*-algebra $\ca_r(G)$ is simple.
\item The reduced crossed product $\rcp{C(\fb)}{G}$ is simple.
\item The reduced crossed product $\rcp{C(B)}{G}$ is simple for some $G$-boundary $B$.
\item The $G$-action on $\fb$ is topologically free.
\item The $G$-action on some $G$-boundary is topologically free.
\end{enumerate}
\end{thm}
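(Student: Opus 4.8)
The plan is to organise the five conditions around the core equivalence $(1)\Leftrightarrow(2)\Leftrightarrow(4)$ and to attach $(3)$ and $(5)$ through the passages $(2)\Rightarrow(3)$, $(4)\Rightarrow(5)$ (both immediate, as $\fb$ is itself a $G$-boundary), together with $(5)\Rightarrow(3)$ and $(3)\Rightarrow(1)$. For these last two I would exploit the universality of $\fb$: every $G$-boundary $B$ admits a $G$-equivariant continuous surjection $\fb\to B$, so $C(B)$ embeds $G$-equivariantly into $C(\fb)$, and every $G$-boundary is minimal and strongly proximal. Consequently the arguments below, stated for $\fb$, apply verbatim to an arbitrary boundary $B$, and it suffices to establish $(1)\Leftrightarrow(2)\Leftrightarrow(4)$ and $(3)\Rightarrow(1)$.

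I would first treat the dynamics-to-simplicity implication $(4)\Rightarrow(2)$, and identically $(5)\Rightarrow(3)$. Here only minimality and topological freeness of a boundary action are needed: by the standard theorem of Archbold--Spielberg (equivalently Kawamura--Tomiyama), topological freeness yields the intersection property, that every nonzero ideal of $\rcp{C(\fb)}{G}$ meets $C(\fb)$ nontrivially, while minimality forbids nontrivial $G$-invariant ideals in $C(\fb)$. Together these force $\rcp{C(\fb)}{G}$ to be simple.

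The core is $(1)\Leftrightarrow(2)$, where I would use the realisation of $C(\fb)$ as the $G$-injective envelope of $\bC$. The starting point is to show that $C(\fb)$ embeds $G$-equivariantly into the $G$-injective algebra $I(\ca_r(G))$ with the canonical unitaries $\lambda_g$ implementing the $G$-action on this copy; the covariance relations are then satisfied and one obtains a chain $\ca_r(G)\subseteq\rcp{C(\fb)}{G}\subseteq I(\ca_r(G))$ in which $I(\ca_r(G))$ serves as a common injective envelope. To transfer simplicity I would \emph{not} attempt to restrict ideals, since a boundary carries no invariant probability measure and there is no conditional expectation $\rcp{C(\fb)}{G}\to\ca_r(G)$. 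Instead I would use strong proximality of $\fb$ as an averaging device: pushing it through the canonical expectation $\rcp{C(\fb)}{G}\to C(\fb)$, whose restriction to $\ca_r(G)$ is the canonical trace, one aims to establish a Dixmier-type property, namely that for each $a\in\ca_r(G)$ the set $\cconv\{\lambda_g a\lambda_g^{*}:g\in G\}$ approaches the scalars, from which simplicity of $\ca_r(G)$ is read off; and dually that a proper ideal of $\ca_r(G)$ survives into the crossed product. Converting the measure-theoretic collapse guaranteed by strong proximality into operator-norm estimates, with the rigidity of $G$-equivariant maps supplied by the injective envelope, is the \emph{main obstacle}. The same argument, run with $C(B)\subseteq C(\fb)$ for an arbitrary boundary $B$, gives $(3)\Rightarrow(1)$.

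Finally I would close the cycle with $(2)\Rightarrow(4)$, by contraposition. Suppose the action on $\fb$ is not topologically free, so some $g\neq e$ fixes a nonempty open set pointwise. Because $C(\fb)$ is injective, $\fb$ is Stonean, so the closure of that set is clopen and still fixed pointwise by $g$; let $p=1_V$ be the associated projection. Then $g\cdot p=p$, so $p$ commutes with the canonical unitary $u_g$, and $pu_g$ is a nontrivial unitary of the corner $p(\rcp{C(\fb)}{G})p$ commuting with $pC(\fb)p$. The plan is to promote this local fixed behaviour, using minimality together with strong proximality, into a genuine nontrivial ideal of $\rcp{C(\fb)}{G}$, contradicting simplicity. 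I expect this step to be delicate, since a merely minimal non-topologically-free action can still have a simple reduced crossed product; it is precisely the strong proximality of $\fb$, beyond minimality, that must be exploited here. Combining $(4)\Rightarrow(2)$, $(2)\Rightarrow(4)$ and $(1)\Leftrightarrow(2)$ with the trivial inclusions and the universality reductions of the first paragraph yields the equivalence of all five conditions.
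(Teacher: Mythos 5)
Your outer scaffolding --- reducing (3) and (5) to (2) and (4) via the universality of $\fb$, deducing (4)$\Rightarrow$(2) and (5)$\Rightarrow$(3) from Archbold--Spielberg together with minimality, and the trivial implications --- agrees with the paper. But both hard implications are left at the level of an acknowledged ``obstacle,'' and the strategy you propose for the central one cannot work as stated. For (1)$\Leftrightarrow$(2) you want to derive a Dixmier-type property, that $\cconv\{\lambda_g a \lambda_g^* : g \in G\}$ meets the scalars for every $a \in \ca_r(G)$, from the strong proximality of the action on $\fb$. This is impossible: such a property implies simplicity of $\ca_r(G)$ outright, yet \emph{every} non-amenable group acts minimally and strongly proximally on its nontrivial Furstenberg boundary, while plenty of non-amenable groups (any with nontrivial amenable radical, e.g.\ $\bF_2 \times \mathbb{Z}/2\mathbb{Z}$) fail to be C*-simple. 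Strong proximality collapses measures on $\fb$; converting that into norm collapse of averages in $\ca_r(G)$ requires exactly the freeness of stabilizers, i.e.\ condition (4), which you are not assuming at this stage. The paper's proof of (2)$\Rightarrow$(1) is entirely different: given an ideal $\J \subset \ca_r(G)$, the quotient map extends by $G$-injectivity to a UCP $G$-equivariant map $\tilde\pi$ on $\rcp{C(\fb)}{G}$, which is then corrected to a genuine $*$-homomorphism $\sigma(x)=\tilde\pi(x)q$ by cutting with a support projection $q$ living in the bidual of $I_G(\ca_r(G)/\J)$; simplicity of the crossed product forces $\ker\sigma=0$ and hence $\J=0$. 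For (1)$\Rightarrow$(2) and (3)$\Rightarrow$(2) the paper simply invokes Hamana's theorem that simplicity of $\rcp{\A}{G}$ passes to $\rcp{I_G(\A)}{G}$, with $C(\fb)=I_G(\bC)=I_G(C(B))$.

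The implication (2)$\Rightarrow$(4) also has a genuine gap. Your clopen projection $p=1_V$ commuting with $\lambda_s$ does not by itself produce an ideal, and no amount of minimality or strong proximality will ``promote'' it without the key fact you are missing: for $x$ in the interior of the fixed-point set, the stabilizer $G_x$ is \emph{amenable}. The paper proves this by composing a $G$-equivariant projection $\ell^\infty(G)\to C(\fb)$ with evaluation at $x$ and pulling back along a section of $G_x\backslash G$ to obtain an invariant mean on $\ell^\infty(G_x)$. Amenability of $G_x$ makes the quasi-regular representation on $\ell^2(G/G_x)$ weakly contained in $\lambda$, so that via Fell's absorption principle the covariant pair $(\lambda',\pi_x)$ defines a representation of $\rcp{C(\fb)}{G}$ annihilating $f(1-\lambda_s)$ for $f$ supported in the fixed-point set; that exhibits the nontrivial ideal. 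Without the amenability of stabilizers your approach stalls exactly where you predict it will.
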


We apply this result to prove that Tarski monster groups are C*-simple. This provides another solution to a problem of de la Harpe (recently answered by Olshanskii and Osin) about the existence of C*-simple groups with no free subgroups.

\begin{thm}
Tarski monster groups are C*-simple.
\end{thm}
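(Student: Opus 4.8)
The plan is to deduce the statement from the C*-simplicity criterion established above: writing $\fb$ for the Furstenberg boundary, it suffices to prove that the $G$-action on $\fb$ is topologically free, i.e.\ that $\operatorname{Fix}(g) = \{x \in \fb : gx = x\}$ has empty interior for every $g \ne e$. Fix the defining prime $p$, so that every proper nontrivial subgroup of the Tarski monster $G$ is cyclic of order $p$. I would first record three structural facts. First, $G$ is non-amenable (Olshanskii), so by the characterization in the abstract $\fb$ is nontrivial; since the boundary action is minimal and $|\fb| > 1$, there is no global fixed point, and hence every point stabilizer $G_x$ is a \emph{proper} subgroup of $G$, forcing $G_x$ to be trivial or cyclic of order $p$. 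Second, $G$ is simple (a nontrivial proper normal subgroup would be central of order $p$, which is incompatible with the Tarski structure). Third, $\fb$ is a $G$-boundary, so the action is minimal and strongly proximal.

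The analytic input is a standard contraction property of boundary actions. Since the action is strongly proximal, for every $\nu \in \operatorname{Prob}(\fb)$ the weak-$*$ closure $\overline{G_*\nu}$ contains at least one point mass; as this closure is $G$-invariant and the action is minimal, it in fact contains $\{\delta_x : x \in \fb\}$. Consequently, for every nonempty open $U \subseteq \fb$ and every $\ep > 0$ there exists $h \in G$ with $(h_*\nu)(U) > 1 - \ep$.

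Now I would argue by contradiction. Suppose topological freeness fails, so there is $g \ne e$ (necessarily of order $p$) with $U := \operatorname{int}\operatorname{Fix}(g) \ne \emptyset$; thus $g$ fixes $U$ pointwise. Since $g \ne e$ and the action is faithful, choose $y \in \fb$ with $gy \ne y$, and apply the contraction property to $\nu = \tfrac12(\delta_y + \delta_{gy})$ with $\ep = \tfrac12$: there is $h \in G$ with $(h_*\nu)(U) > \tfrac12$, which forces both $hy \in U$ and $hgy \in U$. Put $g' = h^{-1} g h$. Because $g$ fixes $U$ pointwise, $g(hy) = hy$ and $g(hgy) = hgy$, i.e.\ $g'y = y$ and $g'(gy) = gy$. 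In particular $G_y$ contains the nontrivial element $g'$, so $G_y = \langle g' \rangle$ has order $p$. From $g'(gy) = gy$ we get $(g^{-1} g' g)y = y$, so $g^{-1} g' g \in G_y = \langle g' \rangle$, meaning $g$ normalizes $G_y$. But $gy \ne y$ gives $g \notin G_y$, so $\langle G_y, g \rangle \supsetneq G_y$; since $G_y$ is already cyclic of order $p$, the Tarski structure forces $\langle G_y, g \rangle = G$, whence $G_y \trianglelefteq G$. This contradicts the simplicity of $G$, and the contradiction completes the proof of topological freeness, hence of C*-simplicity.

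The main obstacle is precisely the step of excluding an element that fixes a nonempty open set, and the argument above shows that strong proximality is exactly what resolves it: it lets me slide a two-point set into the putative open fixed set $U$, producing a \emph{conjugate} of $g$ whose stabilizer rigidly captures $g$ via the normalizer. I expect the delicate points to be justifying the contraction statement at the level of a single group element (rather than a net) and confirming the stabilizer dichotomy from the no-global-fixed-point property; both are routine once the boundary structure is in hand. It is worth noting that the argument only uses that proper subgroups are amenable together with simplicity of $G$, so the same reasoning applies to any infinite simple group all of whose proper subgroups are finite.
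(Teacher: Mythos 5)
Your proof is correct, and it takes a genuinely different route from the paper's. Both arguments reduce the theorem to topological freeness of the $G$-action on $\fb$ and both hinge on the same normalizer rigidity of order-$p$ subgroups (a nontrivial proper stabilizer $G_x$ is cyclic of order $p$ and satisfies $N_G(G_x)=G_x$ unless it is normal), but the contradictions are reached differently. The paper fixes a nonempty open set $V$ inside a fixed-point set and uses only minimality: translating one point of $V$ to other points of $V$ produces, step by step, infinitely many distinct elements of the finite stabilizer $G_x$, so $V$ must be finite; a finite nonempty open set gives an isolated point, contradicting Proposition \ref{prop:no-isolated-points}, a feature of $\fb$ coming from its construction as an injective envelope. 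You instead use strong proximality quantitatively: contracting $\tfrac12(\delta_y+\delta_{gy})$ into $U=\operatorname{int}\operatorname{Fix}(g)$ yields a conjugate $g'$ of $g$ stabilizing both $y$ and $gy$, and the normalizer computation then makes $G_y$ normal, contradicting simplicity. Your version has the advantage of working on an arbitrary nontrivial $G$-boundary (it never uses that $\fb$ has no isolated points), so it feeds directly into the implication (5) $\Rightarrow$ (1) of Theorem \ref{thm:c-star-simplicity}; the paper's version does not need strong proximality at all, only minimality plus the Stonean structure of $\fb$. Two points to tighten: faithfulness of the action deserves a line (the kernel is a proper normal subgroup because $\fb$ is nontrivial and minimal, hence trivial by simplicity), and your closing claim that the argument applies to every infinite simple group whose proper subgroups are finite overreaches slightly, since both the step where $g^{-1}g'g\in G_y$ forces $g$ to normalize $G_y$ and the step $\langle G_y,g\rangle=G$ use that $G_y$ is cyclic of prime order.
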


In addition to this introduction, this paper has five other sections. In Section \ref{sec:prelim} we briefly review the requisite background material. In Section \ref{sec:op-alg-construction}, we construct the Furstenberg boundary using Hamana's theory of equivariant injective envelopes, and study some of its properties. In Section \ref{sec:exactness-and-nuclear-embeddings} we prove a group is exact if and only its action on the Furstenberg boundary is amenable, and we use this to prove our nuclear embedding theorem for the reduced C*-algebra of an exact group.  In Section \ref{sec:rigidity-stationary-measures} we prove our rigidity results. In Section \ref{sec:C*-sim} we consider the connection between the C*-simplicity of a group and the ergodic properties of its action on the Furstenberg boundary

We note that our construction of the Furstenberg boundary generalizes to certain locally compact quantum groups. In fact, many of the results in this paper also hold in that setting. This suggests that our construction provides an appropriate quantum-group-theoretic analogue of the Furstenberg boundary. We intend to pursue these issues in a later paper.

%%%%%%%%%%%%%%%%%%%%%%%%%%%%%%%%%%%%%%%%%%%
\subsection*{Acknowledgements}
%%%%%%%%%%%%%%%%%%%%%%%%%%%%%%%%%%%%%%%%%%%
We are grateful to Zhong-Jin Ruan for introducing us to Ozawa's conjecture, and for his support and encouragement throughout this project. We are also grateful to our colleagues Wojciech Jaworski and Vadim Kaimanovich for many helpful comments and suggestions. We extend our sincere thanks to R\'emi Boutonnet, Masamichi Hamana and Nicolas Monod for their feedback on an earlier draft of this paper. Finally, we thank Narutaka Ozawa for many helpful suggestions, and in particular for showing us how to strengthen Theorem \ref{thm:c-star-simplicity}.

%%%%%%%%%%%%%%%%%%%%%%%%%%%%%%%%%%%%%%%%%%
\section{Preliminaries} \label{sec:prelim}
%%%%%%%%%%%%%%%%%%%%%%%%%%%%%%%%%%%%%%%%%%

%%%%%%%%%%%%%%%%%%%%%%%%%%%%%%%%%%%%%%%%%%
\subsection{Group actions}
%%%%%%%%%%%%%%%%%%%%%%%%%%%%%%%%%%%%%%%%%%

Let $G$ be a discrete group.  A locally compact Hausdorff space $X$ is said to be a \emph{$G$-space} if there is a continuous homomorphism from $G$ into the group of homeomorphisms on $X$ that sends the the identity element of $G$ to the identity map on $X$. We refer to such a homomorphism as a \emph{$G$-action} on $X$.

If $X$ is a compact $G$-space, then by contravariance, the $G$-action on $X$ induces a $G$-action on the C*-algebra $C(X)$ of continuous functions on $X$,
\[
s f(x) = f(s^{-1} x), \quad s \in G,\ f \in C(X),\ x \in X.
\]
Similarly, the $G$-action on $X$ induces a $G$-action on the set $\P(X)$ of probability measures on $X$,
\[
s \nu(Y) = \nu(s^{-1}Y), \quad s \in G,\ \nu \in \P(X),\ Y \subset X.
\]
Equipped with this $G$-action and endowed with the weak* topology, the space $\P(X)$ is a compact $G$-space.

If $X$ is a compact $G$-space, then for every probability measure $\nu \in \P(X)$, there is a natural unital positive $G$-equivariant map $P_\nu : C(X) \to \ell^\infty(G)$,
\begin{equation}\label{eqn:poisson-map}
P_\nu(f)(s) = \langle f, s\nu \rangle = \int_{X} f(sx) \, d\nu(x), \quad s \in G,\ f \in C(X).
\end{equation}
We will refer to the map $P_\nu$ as the \emph{Poisson map} corresponding to $\nu$.

%%%%%%%%%%%%%%%%%%%%%%%%%%%%%%%%%%%%%%%%%%
\subsection{Injective envelopes} \label{sec:G-inj-envelope}
%%%%%%%%%%%%%%%%%%%%%%%%%%%%%%%%%%%%%%%%%%

We require Hamana's theory  \cite{Ham85} of injective envelopes for operator systems equipped with a group action. For the general theory of operator systems and injective envelopes, we refer the reader to Hamana's papers \cites{Ham79a,Ham79b}, or to Paulsen's book \cite{Pau02}*{Chapter 15}.

Let $G$ be a discrete group, and let $\S$ be an \emph{operator system}, i.e. a unital self-adjoint subspace of a unital C*-algebra. We say $\S$ is a $G$-operator system if there is a homomorphism from  $G$ into the group of order isomorphisms on $\S$ that sends the the identity element of $G$ to the unit in $\S$. We refer to such a homomorphism as a \emph{$G$-action} on $\S$.

If $\T$ is another $G$-operator system, then a map $\phi : \S \to \T$ is said to be \emph{$G$-equivariant} if it commutes with the $G$-actions on $\S$ and $\T$,
\[
\phi(s T) = s \phi(T),\quad \forall s \in G,\ \forall T \in \S.
\]

A $G$-operator system $\U$ is said to be \emph{$G$-injective} if for every unital completely isometric $G$-equivariant map $\iota : \S \to \T$ and every unital completely positive $G$-equivariant map $\psi : \S \to \U$ there is a unital completely positive $G$-equivariant map $\hat{\phi} : \T \to \U$ such that $\hat{\phi} \iota = \phi$.

Hamana proves in \cite{Ham85}*{Lemma 2.2} that if $\S$ is an injective operator system, then the $G$-operator system $\ell^\infty(G,\S)$ is always $G$-injective. 

A \emph{$G$-extension of $\S$} is a pair $(\T, \iota)$ consisting of a $G$-operator system $\T$, and a completely isometric $G$-equivariant map $\iota : \S \to \T$. We say the image $\iota(\S)$ of $\S$ under $\iota$ is a \emph{completely isometric $G$-equivariant copy of $\S$}.

A $G$-extension $(\U, \iota)$ is \emph{$G$-injective} if $\U$ is $G$-injective. It is \emph{$G$-essential} if for every unital completely positive $G$-equivariant map $\phi : \U \to \T$ such that $\phi \iota$ is completely isometric on $\S$, $\phi$ is necessarily completely isometric on $\U$.  It is \emph{$G$-rigid} if for every unital completely positive $G$-equivariant map $\phi : \U \to \U$ such that $\phi \iota = \iota$ on $\S$, $\phi$ is necessarily the identity map on $\U$.

\begin{defn}
Let $G$ be a discrete group, and let $\S$ be a $G$-operator system. A $G$-extension of $\S$ that is $G$-injective and $G$-essential is said to be a \emph{$G$-injective envelope of $\S$}.
\end{defn}

We note that by \cite{Ham85}*{Lemma 2.4}, every $G$-injective envelope of $\S$ is $G$-rigid. 

\begin{thm}[Hamana]\label{thm:existence-G-inj-envelope}
Let $G$ be a discrete group, and let $\S$ be a $G$-operator system. Then $\S$ has a $G$-injective envelope $(I_G(\S), \kappa)$. This injective envelope is unique, in the sense that for every $G$-injective envelope $(\U, \iota)$ of $\S$, there is a completely isometric $G$-equivariant map $\phi : I_G(\S) \to \U$ such that $\phi \kappa = \iota$.
\end{thm}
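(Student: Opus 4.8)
The plan is to adapt Hamana's classical construction of the injective envelope of an operator system to the equivariant setting, carrying out every step within the category of $G$-operator systems and $G$-equivariant maps. The construction will proceed in two stages: first I would embed $\S$ completely isometrically and $G$-equivariantly into a $G$-injective operator system, and then extract a minimal $G$-equivariant projection onto an envelope by a seminorm-minimization argument. Uniqueness will then follow formally from the defining properties of $G$-injectivity and $G$-essentiality.

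For the embedding, I would forget the $G$-action on $\S$ and choose a unital complete order embedding $j : \S \to \M$ into an injective operator system $\M$ (for instance $\M = B(H)$ for a faithful representation of the enveloping C*-algebra). The cited lemma \cite{Ham85}*{Lemma 2.2} shows that $\ell^\infty(G,\M)$, equipped with the translation action $(t\cdot f)(s) = f(t^{-1}s)$, is $G$-injective. I would then define $\kappa : \S \to \ell^\infty(G,\M)$ by $\kappa(x)(s) = j(s^{-1}x)$. A direct computation gives $\kappa(tx) = t\cdot\kappa(x)$, so $\kappa$ is $G$-equivariant, and since each element of $G$ acts on $\S$ by a complete order isomorphism, evaluation at the identity shows $\kappa$ is completely isometric. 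Thus $(\ell^\infty(G,\M),\kappa)$ is a $G$-injective $G$-extension of $\S$.

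Writing $\U_0 = \ell^\infty(G,\M)$, I would next consider the set $\F$ of $G$-equivariant unital completely positive maps $\phi : \U_0 \to \U_0$ with $\phi\kappa = \kappa$. This set is nonempty (it contains the identity), convex, closed under composition, and --- realizing $\U_0$ inside some $B(K)$ --- compact in the point-weak* topology. I would then run Hamana's minimization of the seminorms $p_\phi(x) = \|\phi(x)\|$ entirely inside $\F$: a decreasing chain of such seminorms has a pointwise infimum that is again realized by an element of $\F$, by compactness, so Zorn's lemma produces a minimal element $\psi \in \F$. Because composition and point-weak* limits preserve both $G$-equivariance and the condition $\phi\kappa = \kappa$, the argument never leaves the equivariant class. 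A Schwarz-inequality (multiplicative-domain) argument then shows that minimality of $p_\psi$ forces $\psi$ to be idempotent, $\psi^2 = \psi$, and forces the extension onto its range to be $G$-rigid. Setting $I_G(\S) = \psi(\U_0)$ with its operator-system structure as the range of a UCP idempotent, $I_G(\S)$ is $G$-injective (being the image of a $G$-equivariant UCP idempotent on the $G$-injective system $\U_0$), and $\kappa$ maps $\S$ into $I_G(\S)$ since $\psi\kappa = \kappa$; together with $G$-injectivity the rigidity gives $G$-essentiality, so $(I_G(\S),\kappa)$ is a $G$-injective envelope.

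For uniqueness, let $(\U,\iota)$ be another $G$-injective envelope. Applying $G$-injectivity of $\U$ to the embedding $\kappa : \S \to I_G(\S)$ and the map $\iota : \S \to \U$ produces a $G$-equivariant UCP map $\phi : I_G(\S)\to\U$ with $\phi\kappa = \iota$; symmetrically, $G$-injectivity of $I_G(\S)$ produces $\rho : \U \to I_G(\S)$ with $\rho\iota = \kappa$. Then $\rho\phi$ is a $G$-equivariant UCP self-map of $I_G(\S)$ fixing $\kappa(\S)$, so $\rho\phi = \operatorname{id}$ by $G$-rigidity (valid since $I_G(\S)$ is an injective envelope, by \cite{Ham85}*{Lemma 2.4}); and since $\phi\kappa = \iota$ is completely isometric, $G$-essentiality of $I_G(\S)$ forces $\phi$ to be completely isometric, giving exactly the required map. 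The main obstacle is the middle step: establishing, within the equivariant class, that a minimal seminorm is attained and that the minimizing map is idempotent and rigid. The attainment rests on point-weak* compactness of $\F$ together with stability of the equivariance and fixing conditions under limits, and the idempotency rests on the operator-system rigidity argument; once the minimal $G$-projection is in hand, both the envelope property and uniqueness are essentially formal consequences of $G$-injectivity, $G$-rigidity, and $G$-essentiality.
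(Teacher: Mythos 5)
The paper offers no proof of this theorem---it is attributed to Hamana and imported from \cite{Ham85}---so the comparison is really with Hamana's original argument, and your reconstruction follows it faithfully. The embedding $\kappa(x)(s)=j(s^{-1}x)$ into $\ell^\infty(G,\M)$ with $\M$ injective (so that \cite{Ham85}*{Lemma 2.2} applies) is exactly the embedding the paper records in Section \ref{sec:G-inj-envelope}, and your key observation---that the set $\F$ of $G$-equivariant UCP self-maps of $\ell^\infty(G,\M)$ fixing $\kappa(\S)$ is convex, closed under composition, point-weak* compact (since $\ell^\infty(G,B(H))$ is weak*-closed and the translation action is normal), and stable under the limits arising in the seminorm minimization---is precisely what allows Hamana's non-equivariant construction to run verbatim in the $G$-category. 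The uniqueness argument (maps in both directions from $G$-injectivity, one composite equal to the identity by $G$-rigidity via \cite{Ham85}*{Lemma 2.4}, complete isometry of $\phi$) is also standard and correct; indeed $\rho\phi=\operatorname{id}$ already forces $\phi$ to be completely isometric, so the appeal to essentiality there is redundant but harmless.

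The one step that does not work as you state it is the derivation of idempotency. Minimality of the seminorm $p_\psi$ does not yield $\psi^2=\psi$ by a Schwarz-inequality or multiplicative-domain argument; the multiplicative domain enters later, in setting up the Choi--Effros product on the envelope, not here. The correct device is Ces\`aro averaging: the maps $\omega_n=\frac{1}{n}(\psi+\psi^2+\cdots+\psi^n)$ lie in $\F$ (which is convex and closed under composition), any point-weak* cluster point $\omega$ satisfies $p_\omega\le p_\psi$, hence $p_\omega=p_\psi$ by minimality, while the telescoping identity $\omega_n(\psi(x)-x)=\frac{1}{n}\bigl(\psi^{n+1}(x)-\psi(x)\bigr)\to 0$ gives $\|\psi^2(x)-\psi(x)\|=p_\psi(\psi(x)-x)=p_\omega(\psi(x)-x)=0$. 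Note also that all the $\omega_n$ remain $G$-equivariant and fix $\kappa(\S)$, so this repair stays inside the equivariant class exactly as your argument requires. With that substitution (or by simply citing Hamana's minimal-seminorm lemma), your proof is complete.
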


If $\S$ is a $G$-operator system, there is a natural unital completely isometric $G$-equivariant map $\iota : \S \to \ell^\infty(G,\S)$ embedding $\S$ into $\ell^\infty(G,\S)$,
\[
\iota(S)(s) = s^{-1} S, \quad S \in \S,\ s \in G.
\]

If $\A$ is an injective C*-algebra, then $\ell^\infty(G,\A)$ is $G$-injective and hence the map $\iota \kappa^{-1} : \kappa(\A) \to \ell^\infty(G,\A)$ extends to a unital completely positive $G$-equivariant map $\phi : I_G(\A) \to \ell^\infty(G,\A)$. This map is completely isometric on $\A$, and hence by the $G$-essentiality of $I_G(\A)$ it is completely isometric. Thus the image $\phi(I_G(\A))$ is a completely isometric $G$-equivariant copy of $I_G(\A)$ in $\ell^\infty(G,\A)$. We will see below that there are generally many $G$-equivariant copies of $I_G(\A)$ in $\ell^\infty(G,\A)$. 

If we identify $I_G(\A)$ with any completely isometric $G$-equivariant copy of itself in $\ell^\infty(G,\A)$, then by the $G$-injectivity of $I_G(\A)$, there is an idempotent unital completely positive $G$-equivariant map $\psi :\ell^\infty(G,\A) \to I_G(\A)$. Since $\ell^\infty(G,\A)$ is injective, it follows that $I_G(\A)$ is also injective. Hence by a result of Choi and Effros \cite{ChoEff}, the $G$-injective envelope $I_G(\A)$ is itself a C*-algebra with respect to the \emph{Choi-Effros product},
\[
A \cdot B = \psi(\psi(A)\psi(B)), \quad A,B \in I_G(\A).
\]
The C*-algebra obtained in this way is unique up to isomorphism, and in particular does not depend on the map $\psi$. We will identify $I_G(\A)$ with this abstract C*-algebra.

%%%%%%%%%%%%%%%%%%%%%%%%%%%%%%%%%%%%%%%%%%
\section{The Furstenberg boundary} \label{sec:op-alg-construction}
%%%%%%%%%%%%%%%%%%%%%%%%%%%%%%%%%%%%%%%%%%

%%%%%%%%%%%%%%%%%%%%%%%%%%%%%%%%%%%%%%%%%%
\subsection{The Hamana boundary} \label{sec:defn-hamana-boundary}
%%%%%%%%%%%%%%%%%%%%%%%%%%%%%%%%%%%%%%%%%%

Let $G$ be a discrete group. We want to consider the minimal C*-subalgebra of $\ell^\infty(G)$ that arises as the image of a unital positive $G$-equivariant projection on $\ell^\infty(G)$. By the discussion in Section \ref{sec:G-inj-envelope}, this is precisely the $G$-injective envelope $I_G(\bC)$ of $\bC$, where $\bC$ is equipped with the trivial $G$-action.

We know from Section \ref{sec:G-inj-envelope} that $I_G(\bC)$ is a unital C*-algebra, and it follows immediately from the commutativity of $\ell^\infty(G)$ and the definition of the product on $I_G(\bC)$ that it is also commutative. In particular, $I_G(\bC)$ can be identified with the algebra of continuous functions on a compact Hausdorff space.

\begin{defn}
Let $G$ be a discrete group. The \emph{Hamana boundary} $\hb$ of $G$ is the compact space such that $I_G(\bC) = C(\hb)$. By contravariance, the $G$-action on $C(\hb)$ induces a $G$-action on $\hb$ which we will refer to as the \emph{$G$-action on $\hb$}.
\end{defn}

We will soon prove the Hamana boundary $\hb$ can be identified with Furstenberg's universal $G$-boundary $\fb$. But first, we pause to observe that the size of the Hamana boundary can be viewed as a measure of the non-amenability of $G$. Recall that $G$ is \emph{amenable} if there is a positive unital $G$-equivariant map from $\ell^\infty(G)$ to $\bC$. If we identify the algebra $\bC$ with a subalgebra of $\ell^\infty(G)$, then this is equivalent to the existence of a positive unital $G$-equivariant projection from $\ell^\infty(G)$ onto $\bC$. Thus we obtain the following result (see also \cite{Ham78}*{Section 4}).

\begin{prop} \label{prop:trivial-iff-amenable}
Let $G$ be a discrete group. If the Hamana boundary $\hb$ is trivial, then $C(\hb) = \bC$ and $G$ is amenable. Otherwise, if $G$ is non-amenable, then $C(\hb) \ne \bC$, and $\hb$ is necessarily non-trivial.
\end{prop}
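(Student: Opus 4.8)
The plan is to prove the two directions as near-tautologies once the definitions are properly aligned. Recall that $C(\hb) = I_G(\bC)$ is by construction the $G$-injective envelope of $\bC$, realized inside $\ell^\infty(G)$ as the image of an idempotent unital positive $G$-equivariant map, and that $\bC \subset \ell^\infty(G)$ sits inside $C(\hb)$ as the scalars. The stated dichotomy is really the single equivalence: $\hb$ is trivial (i.e. $C(\hb) = \bC$) if and only if $G$ is amenable.

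For the first direction, suppose $\hb$ is trivial, so that $C(\hb) = \bC$. The injective envelope comes equipped with an idempotent unital positive $G$-equivariant projection $\psi : \ell^\infty(G) \to C(\hb) = \bC$. A unital positive $G$-equivariant map $\ell^\infty(G) \to \bC$ is precisely a left-invariant mean on $G$, so its existence shows $G$ is amenable. This is immediate once one unwinds the definition of $I_G(\bC)$ as the range of a $G$-equivariant projection, together with the remark in the excerpt that amenability is equivalent to the existence of such a projection onto the scalars.

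Conversely, suppose $G$ is amenable, so there is a unital positive $G$-equivariant projection $\lambda : \ell^\infty(G) \to \bC$. I would argue that $\bC$, with its trivial $G$-action, is already $G$-injective: given a unital completely positive $G$-equivariant map $\phi : \S \to \bC$ and a completely isometric $G$-equivariant inclusion $\iota : \S \to \T$, one can use injectivity of $\bC$ as an (ordinary) operator system to extend $\phi$ to some unital completely positive map $\hat\phi : \T \to \bC$, and then average $\hat\phi$ against the invariant mean $\lambda$ to render it $G$-equivariant while preserving that it agrees with $\phi$ on $\S$. Hence $(\bC, \mathrm{id})$ is a $G$-injective extension of $\bC$; since it is also trivially $G$-essential (indeed $G$-rigid), it is a $G$-injective envelope. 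By the uniqueness in Theorem \ref{thm:existence-G-inj-envelope}, $I_G(\bC) \cong \bC$, so $C(\hb) = \bC$ and $\hb$ is trivial.

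The only genuine subtlety, and the step I expect to require the most care, is the averaging argument in the converse: one must check that applying the invariant mean $\lambda$ coordinatewise to the non-equivariant extension $\hat\phi$ yields a map that is still unital and completely positive, that the averaging does not disturb the values on $\S$ (which holds because $\phi$ is already equivariant), and that the result genuinely commutes with the $G$-action. Since the target is the scalars $\bC$, the \emph{complete} positivity bookkeeping collapses to ordinary positivity, so this is routine but should be spelled out. Everything else is a direct application of the definitions and of Hamana's uniqueness theorem already available in the excerpt.
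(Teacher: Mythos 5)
Your argument is correct, and the first half coincides with the paper's entire proof: the proposition as literally stated only asserts that triviality of $\hb$ implies amenability (the second sentence is just the contrapositive of the first), and the paper obtains this exactly as you do, by observing that the idempotent unital positive $G$-equivariant map $\ell^\infty(G) \to C(\hb) = \bC$ is an invariant mean. Where you diverge is in proving the converse (amenable $\Rightarrow$ $\hb$ trivial), which the paper does not spell out here but clearly intends and later uses (it underlies the claim $C(\fb)=\bC$ for amenable $G$ in Corollary~\ref{cor:infinite-non-separable}). The paper's implicit route is shorter: it has already identified $C(\hb)=I_G(\bC)$ with the \emph{minimal} C*-subalgebra of $\ell^\infty(G)$ arising as the image of a unital positive $G$-equivariant projection, and when $G$ is amenable the subalgebra $\bC$ is itself such an image, hence equals the minimal one. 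Your route instead verifies directly that $\bC$ is $G$-injective by extending a map via ordinary injectivity of $\bC$ and averaging against the invariant mean $\lambda$, then invokes $G$-essentiality (trivial here) and the uniqueness of the $G$-injective envelope from Theorem~\ref{thm:existence-G-inj-envelope}. The averaging step you flag does work: setting $\tilde\phi(T) = \lambda\bigl(s \mapsto \hat\phi(s^{-1}T)\bigr)$ gives a unital positive (hence, with scalar target, completely positive) functional, invariance of $\lambda$ yields $G$-equivariance, and equivariance of $\phi$ together with the triviality of the action on $\bC$ shows the values on $\S$ are undisturbed. Your version costs a little more bookkeeping but is self-contained and makes explicit the standard fact that amenability of $G$ renders $\bC$ $G$-injective, which the paper leaves buried in the minimality characterization.
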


We will say more about the size of the Hamana boundary, and hence the size of the Furstenberg boundary, in Section \ref{sec:size-univ-boundary}.

%%%%%%%%%%%%%%%%%%%%%%%%%%%%%%%%%%%%%%%%%%
\subsection{Boundaries} \label{sec:boundaries}
%%%%%%%%%%%%%%%%%%%%%%%%%%%%%%%%%%%%%%%%%%

Let $G$ be a discrete group. In this section we will prove the Hamana boundary $\hb$ is a $G$-boundary in the sense of Furstenberg \cite{Furs73}.

\begin{defn} \label{defn:minimal-action}
Let $G$ be a discrete group, and let $X$ be a compact $G$-space. The $G$-action on $X$ is \emph{minimal} if for every $x$ in $X$, the $G$-orbit $Gx = \{sx \mid s \in G\}$ is dense in $X$.
\end{defn}

\begin{prop} \label{prop:action-is-minimal}
Let $G$ be a discrete group. Then the $G$-action on the Hamana boundary $\hb$ is minimal.
\end{prop}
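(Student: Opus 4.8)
The plan is to translate minimality into a statement about $*$-homomorphisms out of $C(\hb)$ and then invoke the $G$-essentiality of the $G$-injective envelope. Recall that the $G$-action on $\hb$ is minimal precisely when the only closed $G$-invariant subsets of $\hb$ are $\emptyset$ and $\hb$ itself. I would therefore argue by contradiction: assuming the action is not minimal, there is a point $x \in \hb$ whose orbit is not dense, and then $Y := \ol{Gx}$ is a nonempty, proper, closed, $G$-invariant subset of $\hb$. Since $Y$ is $G$-invariant it inherits a $G$-action, so that $C(Y)$ is a $G$-operator system with the induced action.

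Given such a $Y$, consider the restriction map $r : C(\hb) \to C(Y)$, $r(f) = f|_Y$. Since $Y$ is $G$-invariant, a direct check using $(sf)(x) = f(s^{-1}x)$ shows that $r$ is $G$-equivariant; it is moreover a unital $*$-homomorphism, hence unital and completely positive. I would now view $C(\hb) = I_G(\bC)$ together with its canonical embedding $\kappa : \bC \to C(\hb)$ as a $G$-injective envelope of $\bC$, and apply $G$-essentiality to the map $\phi = r$. The key point is that the hypothesis of essentiality---that $\phi\kappa$ be completely isometric on $\bC$---is automatic here: $\kappa$ sends $\bC$ onto the scalars $\bC 1 \subset C(\hb)$ and $r$ fixes the unit, so $r\kappa$ is just the isometric unital inclusion $\bC \to C(Y)$. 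Essentiality then forces $r$ to be completely isometric on all of $C(\hb)$, and in particular injective.

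This yields the contradiction: for a commutative C*-algebra the restriction $r$ is injective if and only if $Y$ is dense in $\hb$, which, since $Y$ is closed, means $Y = \hb$, contradicting that $Y$ was chosen proper. Hence no proper nonempty closed $G$-invariant set exists and the $G$-action on $\hb$ is minimal. I do not anticipate a serious obstacle: the whole argument is essentially a one-line application of essentiality once the restriction map is in place. The only point requiring care is recognizing that essentiality applies with no extra hypothesis---because every unital map automatically restricts to a completely isometric map on the one-dimensional system $\bC$---so that $G$-essentiality of $I_G(\bC)$ amounts to the assertion that \emph{every} unital $G$-equivariant completely positive map out of $C(\hb)$ is automatically completely isometric. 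One could alternatively route the same contradiction through the $G$-rigidity of $I_G(\bC)$ by splitting $r$ to produce a nontrivial $G$-equivariant idempotent, but the essentiality argument is the most direct.
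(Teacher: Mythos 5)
Your argument is correct and is essentially the paper's own proof: both apply the $G$-essentiality of $C(\hb)=I_G(\bC)$ to the unital positive $G$-equivariant restriction map $C(\hb)\to C(\ol{Gx})$ to conclude it is isometric, hence injective, forcing $\ol{Gx}=\hb$. The only difference is that you phrase it as a contradiction while the paper argues directly, and you spell out the (correct) observation that essentiality needs no extra hypothesis since every unital map is isometric on $\bC$.
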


\begin{proof}
For every $x \in \partial_H G$, the restriction map $C(\partial_H G) \to C(\overline{Gx})$ is unital and positive, where $\overline{Gx}$ denotes the closure of the orbit $Gx$. Hence by the $G$-essentiality of $C(\hb)$, it is  isometric, and in particular is injective. By contravariance, the inclusion map $\overline{Gx} \to \hb$ is surjective, and it follows that $Gx$ is dense in $\hb$.
\end{proof}

\begin{defn}
Let $G$ be a group, and let $X$ be a compact $G$-space. The $G$-action on $X$ is \emph{strongly proximal} if for every probability measure $\nu \in \P(X)$, the weak* closure of the $G$-orbit $G \nu$ contains a point mass $\delta_x \in \P(X)$ for some $x \in X$.
\end{defn}

\begin{lem} \label{lem:poisson-map-isometry}
Let $G$ be a discrete group, let $\hb$ denote the Hamana boundary of $G$, and let $\nu \in \P(\hb)$ be a probability measure. Then the Poisson map $P_\nu : C(\hb) \to \ell^\infty(G)$ defined in (\ref{eqn:poisson-map}) is an isometry.
\end{lem}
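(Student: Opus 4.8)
The plan is to deduce the lemma directly from the $G$-essentiality of the injective envelope $C(\hb) = I_G(\bC)$, with no estimate on the Poisson kernel required. The entire point is that a unital positive $G$-equivariant map out of $C(\hb)$ has no room to collapse, precisely because $C(\hb)$ is $G$-essential over the one-dimensional system $\bC$.

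First I would verify that $P_\nu$ meets the hypotheses needed to invoke essentiality. It is unital because $\nu$ is a probability measure, so that $P_\nu(1)(s) = \int_{\hb} 1 \, d\nu = 1$ for every $s \in G$. It is positive by construction, and it is $G$-equivariant by the direct computation $P_\nu(tf)(s) = \int_{\hb} f(t^{-1}sx)\,d\nu(x) = P_\nu(f)(t^{-1}s) = (t\,P_\nu(f))(s)$. Since the domain $C(\hb)$ is commutative, the standard fact that a positive map from a commutative C*-algebra is automatically completely positive applies, so that $P_\nu : C(\hb) \to \ell^\infty(G)$ is a unital completely positive $G$-equivariant map into the $G$-operator system $\ell^\infty(G)$.

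Next I would apply essentiality. Let $\kappa : \bC \to C(\hb)$ denote the canonical embedding realizing $C(\hb)$ as the $G$-injective envelope of $\bC$. The composite $P_\nu \kappa : \bC \to \ell^\infty(G)$ sends $\lambda \mapsto \lambda \cdot 1$, and any unital map out of the one-dimensional operator system $\bC$ into a unital operator system is completely isometric. Hence the hypothesis in the definition of $G$-essentiality is automatically satisfied, and essentiality forces $P_\nu$ to be completely isometric on all of $C(\hb)$. In particular $P_\nu$ is an isometry, which is the assertion.

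The point worth emphasizing, and the only place any thought is genuinely required, is that there is essentially no obstacle to overcome: the content lives entirely in the universal property of the injective envelope. The two things to check with care are (i) that positivity promotes to complete positivity because the domain $C(\hb)$ is commutative, and (ii) that the essentiality hypothesis holds vacuously, since the base system $\bC$ is one-dimensional and $P_\nu$ is unital. Once these are in place, essentiality does all the work, and no analysis of the measure $\nu$ or of the kernel $(s,x) \mapsto f(sx)$ is needed.
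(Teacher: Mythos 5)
Your proof is correct and takes exactly the route the paper does: the paper's own argument is the one-line observation that $P_\nu$ is unital, positive and $G$-equivariant, so $G$-essentiality of $C(\hb)$ over $\bC$ forces it to be (completely) isometric. Your additional checks --- that positivity from the commutative domain $C(\hb)$ upgrades to complete positivity, and that the essentiality hypothesis is vacuous because the base system is $\bC$ --- are precisely the details the paper leaves implicit.
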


\begin{proof}
Since the map $P_\nu$ is unital, positive and $G$-equivariant, the result follows immediately from the $G$-essentiality of $C(\hb)$.
\end{proof}

The following result can be proved using Lemma \ref{lem:poisson-map-isometry} and a theorem of Azencott \cite{Aze70}. However, the proof of Azencott seems to use the commutativity of the C*-algebra $C(\hb)$ in an essential way, and therefore does not generalize to the non-commutative setting. We give a completely different proof which does generalize to the noncommutative setting. We believe this could be important for developing a notion of ``noncommutative Furstenberg boundary'' for a quantum group.

\begin{prop} \label{prop:hamana-strongly-proximal}
Let $G$ be a discrete group. Then the $G$-action on the Hamana boundary $\hb$ is strongly proximal.
\end{prop}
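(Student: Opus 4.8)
The plan is to fix an arbitrary probability measure $\nu \in \P(\hb)$ and to show that its weak* orbit closure $\ol{G\nu}$ contains a point mass. The central tool is the $G$-rigidity of the injective envelope $C(\hb) = I_G(\bC)$ (available since every $G$-injective envelope is $G$-rigid), combined with a barycenter argument; this replaces Azencott's commutative argument and is what makes the proof carry over to the noncommutative setting.

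First I would produce a $G$-equivariant projection onto $C(\hb)$. Since $C(\hb) = I_G(\bC)$ is $G$-injective and sits as a completely isometric $G$-equivariant copy inside the $G$-injective system $\ell^\infty(G)$, the identity map on $C(\hb)$ extends to a unital completely positive $G$-equivariant map $E : \ell^\infty(G) \to C(\hb)$ restricting to the identity on $C(\hb)$. Composing with the Poisson map yields a unital positive $G$-equivariant self-map $E \circ P_\nu : C(\hb) \to C(\hb)$. By the $G$-rigidity of $C(\hb)$, this map must equal $\operatorname{id}_{C(\hb)}$.

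Next I would translate this identity into a statement about measures. The assignment $s \mapsto s\nu$ extends, by the universal property of $\beta G$ together with the weak* compactness of $\P(\hb)$, to a continuous $G$-equivariant map $\Phi : \beta G \to \P(\hb)$ whose range lies in $\ol{G\nu}$; in these terms $P_\nu(f)(\omega) = \langle f, \Phi(\omega) \rangle$ for $\omega \in \beta G$. Representing $E$ through its states as $E(g)(x) = \int_{\beta G} g \, d\beta_x$ with $\beta_x \in \P(\beta G)$, the relation $E \circ P_\nu = \operatorname{id}$ becomes, for every $x \in \hb$,
\[
\delta_x = \int_{\beta G} \Phi(\omega) \, d\beta_x(\omega),
\]
so each point mass $\delta_x$ is realized as the barycenter of the pushforward $\Phi_* \beta_x$.

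Finally, since point masses are extreme points of the convex set $\P(\hb)$, a standard Choquet-theoretic fact forces $\Phi_* \beta_x$ to be concentrated at $\delta_x$; hence $\Phi(\omega) = \delta_x$ for $\beta_x$-almost every $\omega$, and in particular for at least one $\omega \in \beta G$. As $\Phi(\omega) \in \ol{G\nu}$, this exhibits the point mass $\delta_x$ in the orbit closure, giving strong proximality. I expect the main obstacle to be this last step: justifying the extreme-point argument when $\P(\hb)$ need not be metrizable, and carefully verifying that the range of $\Phi$ genuinely lies in $\ol{G\nu}$. The conceptual crux, though, is the appearance of $G$-rigidity in the identity $E \circ P_\nu = \operatorname{id}$, which does all the real work.
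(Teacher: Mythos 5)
Your proof is correct, and it takes a genuinely different route from the paper's. The paper's argument uses only the $G$-\emph{essentiality} of $C(\hb)$: Lemma \ref{lem:poisson-map-isometry} gives that $P_\nu$ is isometric, a Hahn--Banach separation argument then shows that the weak* closed convex hull of $G\nu$ is all of $\P(\hb)$, and Milman's converse to the Krein--Milman theorem finishes. You instead invoke $G$-injectivity (to produce $E$) and $G$-\emph{rigidity} (to get the identity $E \circ P_\nu = \operatorname{id}$, a strictly stronger statement that in particular implies the isometry of $P_\nu$), and then close with a barycenter argument. The two convexity endgames are equivalent pieces of Choquet theory: your last step is exactly Bauer's characterization of extreme points --- the only probability measure on a compact convex set whose barycenter is an extreme point $\delta_x$ is the point mass at $\delta_x$ --- and this holds for arbitrary compact convex subsets of locally convex Hausdorff spaces, so the metrizability worry you flag is not an actual obstacle; likewise $\Phi(\beta G) \subseteq \ol{G\nu}$ is immediate from continuity of $\Phi$ and density of $G$ in $\beta G$. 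What the paper's route buys is economy: it needs only essentiality, avoids choosing a projection $E$ and a disintegration $\{\beta_x\}$, and uses nothing beyond separation and Milman. What yours buys is the explicit structural identity $\delta_x = \int_{\beta G}\Phi(\omega)\,d\beta_x(\omega)$ and the observation that rigidity alone forces strong proximality --- both arguments avoid commutativity of $C(\hb)$ in any essential way, which is the feature the paper is after for the noncommutative generalization.
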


\begin{proof}
Let $\nu \in \P(\hb)$ be a probability measure. By Proposition \ref{prop:action-is-minimal}, the $G$-action on $\hb$ is minimal. Hence we must prove that for every point $x \in \hb$, the point mass $\delta_x \in \P(\hb)$ is contained in the weak* closure of the $G$-orbit $G\nu$.

Let $K$ denote the weak* closed convex hull of $G\nu$ and fix $x \in \hb$. We claim $\delta_x \in K$. Indeed, supposing otherwise, we can apply the Hahn-Banach separation theorem to find a positive function $f \in C(\hb)$ and $\epsilon > 0$ such that
\[
\langle f, t\nu \rangle \leq \langle f, \delta_x \rangle - \epsilon \leq \|f\| - \epsilon, \quad \forall t \in G.
\]
This implies that the Poisson map $P_\nu : C(\hb) \to \ell^\infty(G)$ satisfies
\[
P_\nu f (t) = \langle f, t\nu \rangle \leq \|f\| - \epsilon, \quad \forall t \in G. 
\]
But by Lemma \ref{lem:poisson-map-isometry}, $P_\nu$ is an isometry, which gives a contradiction. Thus $\delta_x$ is contained in $K$.

Since $\delta_x$ is an extreme point of the compact convex set $\P(\hb)$, and since $x$ was arbitrary, $K = \P(\hb)$. Hence by Milman's converse to the Krein-Milman theorem, it follows that the weak* closure of $G\nu$ contains the point mass $\delta_x$ for every $x \in \hb$.
\end{proof}

\begin{defn}
Let $G$ be a group. A compact $G$-space $X$ is said to be a \emph{$G$-boundary} if the $G$-action on $X$ is both minimal and strongly proximal.
\end{defn}

The following result follows immediately from Proposition \ref{prop:action-is-minimal} and Proposition \ref{prop:hamana-strongly-proximal}. 

\begin{cor}
Let $G$ be a discrete group. Then the Hamana boundary $\hb$ is a $G$-boundary.
\end{cor}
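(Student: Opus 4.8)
The plan is to observe that this corollary is a direct assembly of the two preceding propositions against the definition of a $G$-boundary, so essentially no new work is required. Recall that a compact $G$-space $X$ is defined to be a $G$-boundary precisely when the $G$-action on $X$ is both minimal and strongly proximal. Thus it suffices to verify that $\hb$ is a compact $G$-space and that each of these two properties holds.

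First I would note that $\hb$ is a compact Hausdorff $G$-space essentially by construction: the algebra $I_G(\bC)$ was shown to be a commutative unital C*-algebra, so by Gelfand duality it has the form $C(\hb)$ for a compact Hausdorff space $\hb$, and the $G$-action on $C(\hb)$ induces by contravariance a $G$-action on $\hb$. This is exactly the setup recorded in the definition of the Hamana boundary, so nothing further needs to be checked here.

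Next I would invoke the two propositions directly. By Proposition \ref{prop:action-is-minimal}, the $G$-action on $\hb$ is minimal, and by Proposition \ref{prop:hamana-strongly-proximal}, the $G$-action on $\hb$ is strongly proximal. Combining these two facts with the definition of a $G$-boundary immediately yields that $\hb$ is a $G$-boundary, which completes the argument.

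I do not expect any genuine obstacle in this corollary itself, since the substantive content has already been dispatched: minimality was deduced from the $G$-essentiality of $C(\hb)$ applied to restriction maps onto orbit closures, and strong proximality was obtained via the Poisson map isometry of Lemma \ref{lem:poisson-map-isometry} together with a Hahn–Banach separation and Milman's converse to Krein–Milman. The only thing to be careful about is purely expository, namely making sure the statement is phrased so that the reader sees it as the formal conjunction of the minimal and strongly proximal properties rather than as requiring a fresh proof. Consequently I would keep the write-up to a single sentence citing Proposition \ref{prop:action-is-minimal} and Proposition \ref{prop:hamana-strongly-proximal}.
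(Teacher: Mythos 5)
Your proposal is correct and matches the paper exactly: the corollary is stated there as following immediately from Proposition \ref{prop:action-is-minimal} (minimality) and Proposition \ref{prop:hamana-strongly-proximal} (strong proximality), combined with the definition of a $G$-boundary. No further commentary is needed.
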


%%%%%%%%%%%%%%%%%%%%%%%%%%%%%%%%%%%%%%%%%%
\subsection{Universality} \label{sec:universality}
%%%%%%%%%%%%%%%%%%%%%%%%%%%%%%%%%%%%%%%%%%

Furstenberg proved in \cite{Furs73}*{Proposition 4.6}  that every group $G$ has a unique $G$-boundary $\fb$ which is \emph{universal}, in the sense that every $G$-boundary is a continuous $G$-equivariant image of $\fb$. We will refer to $\fb$ as the \emph{Furstenberg boundary of $G$}. In this section we will prove that the Hamana boundary $\hb$ can be identified with the Furstenberg boundary $\fb$.

\begin{lem} \label{lem:map-boundary-to-minimal-mult}
Let $G$ be a group, let $M$ be a minimal compact $G$-space and let $B$ be a compact $G$-boundary. There is at most one unital positive $G$-equivariant map from $C(B)$ to $C(M)$, and if such a map exists, then it is a unital injective *-homomorphism.
\end{lem}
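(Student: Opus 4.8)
The plan is to translate the statement into the language of measures and boundary maps, show that any such map is automatically supported on point masses, and then read off both the multiplicativity and the uniqueness from this single structural fact.

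First I would set up the standard correspondence. A unital positive map $\phi : C(B) \to C(M)$ assigns to each $m \in M$ a state $f \mapsto \phi(f)(m)$ on $C(B)$, i.e.\ a probability measure $\nu_m \in \P(B)$ with $\phi(f)(m) = \langle f, \nu_m \rangle$; continuity of each function $\phi(f) \in C(M)$ makes $m \mapsto \nu_m$ weak* continuous. A direct computation with the conventions of Section \ref{sec:prelim} shows that $G$-equivariance of $\phi$ is equivalent to $\nu_{sm} = s\nu_m$, i.e.\ to equivariance of the map $\beta : M \to \P(B)$, $\beta(m) = \nu_m$.

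The crucial step is to show $\beta$ takes values in the point masses. Fix $m \in M$. Since $\beta$ is equivariant and $\beta(M)$ is compact (hence closed), the orbit $G\nu_m = \beta(Gm)$ lies in $\beta(M)$, and therefore so does its weak* closure. By strong proximality of $B$ this closure contains some point mass $\delta_b$, so the set $M_0 = \beta^{-1}(\{\delta_x : x \in B\})$ is nonempty. Because $b \mapsto \delta_b$ is a homeomorphism onto a closed $G$-invariant subset of $\P(B)$, the set $M_0$ is closed and $G$-invariant, and minimality of $M$ now forces $M_0 = M$. Thus there is a continuous $G$-equivariant map $\pi : M \to B$ with $\nu_m = \delta_{\pi(m)}$, whence $\phi(f) = f \circ \pi$ is precisely the unital $*$-homomorphism induced by $\pi$. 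Injectivity of $\phi$ is equivalent to surjectivity of $\pi$, which holds because $\pi(M)$ is a nonempty compact $G$-invariant subset of the minimal space $B$.

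Uniqueness then follows painlessly from the same point-mass principle. Given two such maps $\phi, \psi$ with associated measures $\nu_m, \mu_m$, the average $\tfrac12(\phi + \psi)$ is again unital, positive and $G$-equivariant, so by the step above its associated measure $\tfrac12(\nu_m + \mu_m)$ is a point mass for every $m$. Since point masses are the extreme points of $\P(B)$, this forces $\nu_m = \mu_m$ for all $m$, i.e.\ $\phi = \psi$. I expect the middle paragraph to be the only real obstacle: it is exactly there that the two hypotheses are used in tandem — strong proximality of $B$ produces a single point mass in the image $\beta(M)$, and minimality of $M$ propagates it across all of $M$ — and the only points demanding care are the equivariance bookkeeping and the closedness of the set of point masses in $\P(B)$.
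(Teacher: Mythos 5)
Your proof is correct, and at the top level it follows the same strategy as the paper: dualize $\phi$ to a continuous $G$-equivariant map $M \to \P(B)$ and show that this map must land in the point masses. The difference is that the paper outsources precisely that step to Furstenberg's Proposition 4.2 (cited as a black box for both the point-mass property and uniqueness), whereas you prove it from scratch: strong proximality of $B$ puts one point mass into the closed set $\beta(M)$, the set of point masses is closed and $G$-invariant in $\P(B)$, and minimality of $M$ then forces $\beta^{-1}(\{\delta_x : x\in B\})$ to be all of $M$; your averaging-plus-extreme-points argument for uniqueness is likewise a clean self-contained substitute for the citation. What the paper's version buys is brevity; what yours buys is transparency about exactly where each hypothesis enters (strong proximality to seed a point mass, minimality of $M$ to propagate it, minimality of $B$ for surjectivity of $\pi$ and hence injectivity of $\phi$), and it keeps the lemma independent of Furstenberg's paper. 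The only details worth double-checking are ones you already flag: $\beta(M)$ is closed because it is a continuous image of a compact space, and the equivariance convention $\nu_{sm}=s\nu_m$ matches the paper's actions on $C(X)$ and $\P(X)$ --- both check out.
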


\begin{proof}
Let $\phi : C(B) \to C(M)$ be a unital $G$-equivariant map. Consider the adjoint map $\phi^* : \M(M) \to \M(B)$, where $\M(M)$ and $\M(B)$ denote the spaces of regular Borel measures on $M$ and $B$ respectively. Restricting $\phi^*$ to the space of point masses on $M$ gives a continuous $G$-equivariant map $\alpha : M \to \P(B)$, where $\P(B)$ denotes the space of probability measures on $B$.

Since $M$ is minimal and $B$ is a boundary, it follows from \cite{Furs73}*{Proposition 4.2} that the range of $\alpha$ is the space of point masses on $B$, and that moreover that $\alpha$ must be unique. In particular, $\alpha$ can be identified with the unique continuous $G$-equivariant map from $M$ onto $B$

By contravariance, $\alpha$ induces an injective *-homomorphism from $C(B)$ to $C(M)$, and it is easy to check that this map necessarily agrees with $\phi$.
\end{proof}

We are grateful to Masamichi Hamana, who contacted us after receiving an earlier draft of this paper to point out the relevance of the reference \cite{Ham78}. In this work, which does not seem to be well known, Hamana constructs the injective envelope of a Banach module equipped with the action of a discrete group $G$. If the Banach module is a unital commutative C*-algebra, say $C(X)$ for some compact $G$-space $X$, then this injective envelope is precisely the $G$-injective envelope $I_G(C(X))$ of $C(X)$.

In particular, we discovered that the following result was stated without proof by Hamana in \cite{Ham78}*{Remark 4}. A proof can be deduced from Hamana's work using a dynamical characterization of the Furstenberg boundary contained in Glasner's book \cite{Gla76}, however the proof we give here has an operator-theoretic flavour.

\begin{thm} \label{thm:hamana-furstenberg-boundaries-agree}
Let $G$ be a discrete group, and let $B$ be a $G$-boundary, i.e. a minimal strongly proximal compact $G$-space. Then there is a continuous $G$-equivariant map from the Hamana boundary $\hb$ onto $B$. Hence the Hamana boundary $\hb$ can be identified with the Furstenberg boundary $\fb$.
\end{thm}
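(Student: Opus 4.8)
The plan is to establish the universality of $\hb$ directly: for an arbitrary $G$-boundary $B$ I will produce a continuous $G$-equivariant surjection $\hb \to B$. By contravariance this amounts to producing an injective, unital, $G$-equivariant $*$-homomorphism $C(B) \to C(\hb)$, and the essential point is that the $G$-injectivity of $C(\hb) = I_G(\bC)$ supplies such a map almost for free. Concretely, $\bC$ (with the trivial $G$-action) embeds as the scalars into both $C(B)$ and $C(\hb)$: the scalar inclusion $\iota : \bC \to C(B)$ is a unital completely isometric $G$-equivariant map, and the scalar inclusion $\psi : \bC \to C(\hb)$ is a unital completely positive $G$-equivariant map. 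Applying the defining extension property of the $G$-injective system $C(\hb)$ to this pair yields a unital completely positive $G$-equivariant map $\phi : C(B) \to C(\hb)$ with $\phi \iota = \psi$ (the latter identity being automatic for any unital map).

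The next step is to upgrade this positive map to an isomorphism onto its image. By the corollary preceding this theorem, $\hb$ is a $G$-boundary, and in particular the $G$-action on $\hb$ is minimal by Proposition \ref{prop:action-is-minimal}. Since $\phi$ is in particular a unital positive $G$-equivariant map from $C(B)$ into $C(\hb)$ with $\hb$ minimal and $B$ a boundary, Lemma \ref{lem:map-boundary-to-minimal-mult} (taking $M = \hb$) shows that $\phi$ is automatically a unital injective $*$-homomorphism. By Gelfand duality the dual of $\phi$ is a continuous $G$-equivariant map $\hb \to B$, and the injectivity of $\phi$ forces this map to be surjective, giving the desired continuous $G$-equivariant surjection.

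It then remains to identify $\hb$ with $\fb$. By Furstenberg's \cite{Furs73}*{Proposition 4.6} there is a unique universal $G$-boundary $\fb$, onto which every $G$-boundary maps continuously and $G$-equivariantly. The preceding corollary shows $\hb$ is itself a $G$-boundary, while the surjection just constructed shows that every $G$-boundary is a continuous $G$-equivariant image of $\hb$; thus $\hb$ is also universal. Two universal $G$-boundaries are necessarily isomorphic: the two mutual surjections compose to $G$-equivariant continuous self-maps of $\hb$ and of $\fb$, and by the uniqueness clause of Lemma \ref{lem:map-boundary-to-minimal-mult} (applied with $M = B$ a boundary, where the identity is already one such map) each composite must be the identity, so the surjections are mutually inverse homeomorphisms. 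Hence $\hb \cong \fb$.

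I expect the only genuine content to reside in the first two steps: recognizing that the scalars $\bC$ provide a canonical $G$-equivariant source through which $G$-injectivity manufactures the map $\phi$, and that Lemma \ref{lem:map-boundary-to-minimal-mult} promotes any such positive equivariant map into an injective $*$-homomorphism. Everything afterward is formal. The points requiring care are purely bookkeeping: tracking the reversal of arrows under contravariance, and verifying that the scalar inclusions really are $G$-equivariant — which holds because the action on $\bC$ is trivial and the constant functions are fixed by the $G$-actions on $C(B)$ and $C(\hb)$.
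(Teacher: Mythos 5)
Your proof is correct, and its second half --- promoting the unital positive $G$-equivariant map $C(B) \to C(\hb)$ to an injective $*$-homomorphism via Lemma \ref{lem:map-boundary-to-minimal-mult} with $M = \hb$ (minimal by Proposition \ref{prop:action-is-minimal}), then dualizing to get a surjection $\hb \to B$ --- is exactly the paper's argument. Where you genuinely differ is in how the map $\phi : C(B) \to C(\hb)$ is produced. The paper takes a concrete route: it fixes $x \in B$, extends the orbit map $s \mapsto sx$ to a continuous $G$-equivariant surjection $\beta G \to B$ (onto by minimality of $B$ and compactness), thereby realizing $C(B)$ as a $G$-invariant C*-subalgebra of $\ell^\infty(G) = C(\beta G)$, and then restricts a unital positive $G$-equivariant projection $\ell^\infty(G) \to C(\hb)$ to this copy of $C(B)$. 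You instead apply the extension property in the definition of $G$-injectivity directly to the pair of scalar inclusions $\bC \hookrightarrow C(B)$ and $\bC \hookrightarrow C(\hb)$; this is legitimate, since both are unital, completely isometric, and $G$-equivariant for the trivial action on $\bC$, and the compatibility condition $\phi\iota = \psi$ reduces to unitality. Your version is leaner --- it bypasses the Stone-Cech compactification entirely and isolates the fact that the only structural input is the $G$-injectivity of $C(\hb)$ over $\bC$ --- whereas the paper's detour has the side benefit of exhibiting $C(B)$ concretely inside $\ell^\infty(G)$, a picture it exploits again in Section \ref{sec:copies-boundary}. Your closing argument, that the mutual surjections between the two universal boundaries compose to the identity by the uniqueness clause of Lemma \ref{lem:map-boundary-to-minimal-mult}, is also sound (one could equally cite the $G$-rigidity of $C(\hb)$); the paper leaves this final identification implicit.
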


\begin{proof}
We must prove there is a continuous $G$-equivariant map from $\hb$ onto $B$. Make the identification $\ell^\infty(G) = C(\beta G)$, where $\beta G$ denotes the Stone-Cech compactification of $G$. Fix any point $x \in B$ and consider the continuous $G$-equivariant map $s \to sx,\ s \in G$. This extends to a continuous $G$-equivariant map  $\tau : \beta G \to B$. Moreover, by the minimality of the $G$-action on $B$ and the compactness of $\beta G$, the image of $\tau$ is both dense and compact. Hence $\tau$ maps $\beta G$ onto $B$.

By contravariance, $\tau$ induces a unital isometric $G$-equivariant *-homomorphism from $C(B)$ into $\ell^\infty(G)$. Identify $C(B)$ with its image under this map. 

By the $G$-injectivity of $C(\hb)$, there is a unital positive $G$-equivariant map from $\ell^\infty(G)$ onto $C(\hb)$. Let $\psi : C(B) \to C(\hb)$ denote the restriction to $C(B)$ of this map. Since $B$ is a boundary and $\hb$ is minimal, Lemma \ref{lem:map-boundary-to-minimal-mult} implies $\psi$ is an injective *-homomorphism. Applying contravariance again, $\psi$ induces a continuous $G$-equivariant map from $\hb$ onto $B$. 
\end{proof}

For the remainder of this paper, we will refer to the Furstenberg boundary $\fb$, instead of the Hamana boundary $\hb$. However, we will continue to make use of the operator-algebraic construction of $\hb$, as well as the results in Section \ref{sec:defn-hamana-boundary}.

%%%%%%%%%%%%%%%%%%%%%%%%%%%%%%%%%%%%%%%%%%
\subsection{Rigidity}
%%%%%%%%%%%%%%%%%%%%%%%%%%%%%%%%%%%%%%%%%%

Let $G$ be a discrete group, and let $\fb$ denote the Furstenberg boundary of $G$. The construction of $\fb$ using the theory of injective envelopes implies some powerful rigidity results for $G$-equivariant maps on $C(\fb)$, and by contravariance, for $G$-equivariant maps on $\fb$. We collect some of these results in the following theorem.

\begin{thm} \label{thm:rigidity}
Let $G$ be a discrete group, and let $\fb$ denote the Furstenberg boundary of $G$. The algebra $C(\fb)$ of continuous functions is $G$-injective, and in particular is injective. Moreover, we have the following rigidity results:
\begin{enumerate}
\item Every unital positive $G$-equivariant map from $C(\fb)$ into a $G$-operator system $S$ is completely isometric.
\item The only unital positive $G$-equivariant map from $C(\fb)$ to itself is the identity map.
\item If $B$ is a compact $G$-boundary, then there is a unique unital $G$-equivariant map from $C(B)$ to $C(\fb)$, and it is a unital injective *-homomorphism.
\item If $M$ is a minimal compact $G$-space, then there is at most one unital $G$-equivariant map from $C(\fb)$ to $C(M)$, and if such a map exists, then it is a unital injective *-homomorphism.
\end{enumerate}
\end{thm}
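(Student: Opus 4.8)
The plan is to establish each of the four rigidity statements as a consequence of the two core structural facts that the excerpt has already provided: first, the $G$-essentiality of the $G$-injective envelope $C(\fb) = I_G(\bC)$, and second, Lemma \ref{lem:map-boundary-to-minimal-mult}, which governs maps between boundaries and minimal spaces. I would open the proof by recording that $C(\fb)$ is $G$-injective by construction (it is $I_G(\bC)$), and that $G$-injectivity implies ordinary injectivity as an operator system, since $\ell^\infty(G,\bC) = \ell^\infty(G)$ is injective and there is a $G$-equivariant conditional expectation onto $C(\fb)$; this is already spelled out in the discussion at the end of Section \ref{sec:G-inj-envelope}.

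For part (1), I would argue that any unital positive $G$-equivariant map $\phi : C(\fb) \to S$ is completely isometric directly from $G$-essentiality. The subtlety here is that $G$-essentiality as defined concerns maps $\phi : \U \to \T$ where the composition $\phi\iota$ is completely isometric on the \emph{subsystem} $\S = \bC$; so I need to note that for $C(\fb) = I_G(\bC)$, the relevant embedded copy of $\bC$ is the unit line, and any unital map is automatically completely isometric on $\bC$. Thus essentiality forces $\phi$ to be completely isometric on all of $C(\fb)$. (A cleaner route, which I would mention, is that $C(\fb)$ is a commutative C*-algebra, so a unital positive map out of it is automatically completely positive, and then $G$-essentiality of the $G$-injective envelope applies verbatim.) Part (2) is then an immediate corollary of $G$-rigidity, which the excerpt notes holds for every $G$-injective envelope by \cite{Ham85}*{Lemma 2.4}: a unital positive $G$-equivariant self-map of $C(\fb)$ fixes the embedded copy of $\bC$ tautologically, hence must be the identity.

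For parts (3) and (4), I would invoke Lemma \ref{lem:map-boundary-to-minimal-mult} with the Furstenberg boundary playing the appropriate role. Part (4) follows instantly: $\fb$ is a $G$-boundary (hence a boundary) and $M$ is minimal, so taking $B = \fb$ in the lemma gives uniqueness and the $*$-homomorphism conclusion for maps $C(\fb) \to C(M)$. Part (3) requires slightly more, since the lemma only yields \emph{uniqueness}; I must separately produce \emph{existence} of a map $C(B) \to C(\fb)$. For this I would reuse the argument from Theorem \ref{thm:hamana-furstenberg-boundaries-agree}: by $G$-injectivity of $C(\fb)$, the identity on $C(\fb)$ extends along the embedding $C(B) \hookrightarrow \ell^\infty(G)$ to a unital positive $G$-equivariant map $\ell^\infty(G) \to C(\fb)$, whose restriction to $C(B)$ is the desired map; that it is an injective $*$-homomorphism follows from Lemma \ref{lem:map-boundary-to-minimal-mult} since $\fb$ is minimal and $B$ is a boundary.

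The main obstacle I anticipate is purely bookkeeping about which direction each map runs and which space must be minimal versus a boundary in the two applications of the lemma: in part (3) the target $C(\fb)$ must be recognized as (functions on) a minimal space and $B$ as a boundary, whereas in part (4) the roles are swapped, with $C(\fb)$ the boundary and $M$ merely minimal. Since $\fb$ is simultaneously minimal and a boundary, both applications are legitimate, but I would be careful to state the hypotheses of Lemma \ref{lem:map-boundary-to-minimal-mult} correctly in each case rather than conflating them.
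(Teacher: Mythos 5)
Your proposal is correct and follows essentially the same route as the paper, which likewise derives (1) and (2) from the $G$-essentiality and $G$-rigidity of $C(\fb)=I_G(\bC)$ and derives (3) and (4) from Lemma \ref{lem:map-boundary-to-minimal-mult} together with the fact that $\fb$ is both minimal and a $G$-boundary (with existence in (3) supplied by the embedding argument of Theorem \ref{thm:hamana-furstenberg-boundaries-agree}). Your extra remarks on automatic complete positivity of unital positive maps out of the commutative algebra $C(\fb)$ and on the existence step in (3) merely make explicit what the paper leaves implicit.
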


\begin{proof}
The first two results follows immediately from Theorem \ref{thm:hamana-furstenberg-boundaries-agree} and the $G$-essentiality and $G$-rigidity of $C(\fb)$. The second two results follow immediately from Lemma \ref{lem:map-boundary-to-minimal-mult} and the fact that $\fb$ is a $G$-boundary.
\end{proof}

%%%%%%%%%%%%%%%%%%%%%%%%%%%%%%%%%%%%%%%%%%%
\subsection{Copies of the Furstenberg boundary} \label{sec:copies-boundary}
%%%%%%%%%%%%%%%%%%%%%%%%%%%%%%%%%%%%%%%%%%%
Let $G$ be a discrete group, and let $\fb$ denote the Furstenberg boundary of $G$. We know from Theorem \ref{thm:hamana-furstenberg-boundaries-agree} and the results in Section \ref{sec:G-inj-envelope} that there is at least one unital positive $G$-equivariant map from $C(\fb)$ into $\ell^\infty(G)$, and the image of $C(\fb)$ under this map is an isometric $G$-equivariant copy of $C(\fb)$. However, in general there will be many isometric $G$-equivariant copies of $C(\fb)$ in $\ell^\infty(G)$. In this section we will give a complete description of these copies.

Let $\nu \in \P(\fb)$ be a probability measure, and consider the Poisson map $P_\nu : C(\fb) \to \ell^\infty(G)$ defined in (\ref{eqn:poisson-map}). By Lemma \ref{lem:poisson-map-isometry}, $P_\nu$ is an isometry, and hence the image $P_\nu(C(\fb))$ is an isometric $G$-equivariant copy of $C(\fb)$. The next result shows this correspondence is bijective.

\begin{prop}\label{G-equi-copies}
Let $G$ be a discrete group, and let $\fb$ denote the Furstenberg boundary of $G$. The map taking a probability measure $\nu \in \P(\fb)$ to $P_\nu(C(\fb))$ is a bijection between $\P(\fb)$ and the collection of isometric $G$-equivariant copies of $C(\fb)$ in $\ell^\infty(G)$. The image $P_\nu(C(\fb))$ is a C*-subalgebra if and only if $\nu$ is a point mass.
\end{prop}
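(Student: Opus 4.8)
The plan is to establish the claimed bijection in two parts: injectivity and surjectivity of the map $\nu \mapsto P_\nu(C(\fb))$, and then separately characterize when the image is a C*-subalgebra.

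The plan is to prove the bijection by pinning down every copy as a Poisson map, and then to handle the C*-subalgebra claim by a separate multiplicativity argument. For surjectivity, the key observation is that a unital positive $G$-equivariant map $\Phi : C(\fb) \to \ell^\infty(G)$ is completely determined by its value at the identity $e \in G$. First I would note that $f \mapsto \Phi(f)(e)$ is a state on $C(\fb)$, hence integration against a unique $\nu \in \P(\fb)$; then, using $G$-equivariance and the left-translation action on $\ell^\infty(G)$, I would compute
$$\Phi(f)(s) = \bigl(s^{-1}\Phi(f)\bigr)(e) = \Phi(s^{-1}f)(e) = \int_{\fb} f(sx)\, d\nu(x) = P_\nu(f)(s)$$
for every $s \in G$, so that $\Phi = P_\nu$. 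Since every isometric $G$-equivariant copy of $C(\fb)$ in $\ell^\infty(G)$ is by definition the image of such a map, this shows each copy has the form $P_\nu(C(\fb))$.

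For injectivity I would suppose $P_\nu(C(\fb)) = P_{\nu'}(C(\fb)) =: \M$ and exploit rigidity. By Theorem \ref{thm:rigidity}(1) both $P_\nu$ and $P_{\nu'}$ are completely isometric, hence complete order isomorphisms onto $\M$, so $P_{\nu'}^{-1}P_\nu$ is a unital positive $G$-equivariant self-map of $C(\fb)$. Theorem \ref{thm:rigidity}(2) then forces it to be the identity, giving $P_\nu = P_{\nu'}$; evaluating at $e$ yields $\nu = \nu'$.

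For the C*-subalgebra claim, the easy direction is that a point mass $\nu = \delta_x$ gives $P_{\delta_x}(f)(s) = f(sx)$, which is multiplicative in $f$, so $P_{\delta_x}$ is a unital $*$-homomorphism and its image is a C*-subalgebra. The converse is where the real work lies. Assuming $\M := P_\nu(C(\fb))$ is a C*-subalgebra, I would regard $P_\nu$ as a unital surjective linear isometry between the commutative C*-algebras $C(\fb)$ and $\M$; by the Banach--Stone theorem together with unitality it must be a $*$-isomorphism, in particular multiplicative. Evaluating $P_\nu(fg) = P_\nu(f)P_\nu(g)$ at $e$ then gives $\int fg\,d\nu = \bigl(\int f\,d\nu\bigr)\bigl(\int g\,d\nu\bigr)$ for all $f,g \in C(\fb)$, so $\nu$ is a character of $C(\fb)$ and hence a point mass. (Alternatively, writing $\M = C(Y)$, the order isomorphism $P_\nu$ identifies $Y$ with the minimal $G$-space $\fb$, and Theorem \ref{thm:rigidity}(4) forces $P_\nu$ to be a $*$-homomorphism.)

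The main obstacle is precisely this converse. A priori $P_\nu$ is only an order isomorphism onto its image, and order isomorphisms between C*-algebras need not respect products; the content of the statement is that once the image happens to be a C*-subalgebra, the Poisson map is forced to be genuinely multiplicative. This is where an input beyond the order structure is required---either the Banach--Stone theorem, or equivalently the $*$-homomorphism conclusion of the rigidity theorem applied after identifying the image with a minimal $G$-space.
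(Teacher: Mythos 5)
Your proof is correct and follows essentially the same route as the paper: surjectivity by recovering $\nu$ as the state $f\mapsto\Phi(f)(e)$ (the paper writes this as $\nu=\psi^*(\delta_e)$ and does the same computation), injectivity via the rigidity of $C(\fb)$ forcing $P_{\nu'}^{-1}P_\nu=\operatorname{id}$, and the point-mass characterization by evaluating multiplicativity at $e$. The one place you go beyond the paper is the converse of the C*-subalgebra claim: the paper's proof silently passes from ``the image is a C*-subalgebra'' to ``$P_\nu$ is a $*$-homomorphism,'' whereas you justify this step explicitly via Banach--Stone (or the rigidity theorem), which is a worthwhile clarification rather than a different argument.
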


\begin{proof}
Let $\mu, \nu \in \P(\fb)$ be probability measures. If $P_\mu(C(\fb)) = P_\nu(C(\fb))$, then $P_\nu^{-1}P_\mu(C(\fb)) = C(\fb)$, where $P_\nu^{-1}$ is restricted to the image of $P_\nu$. By Theorem \ref{thm:rigidity}, $P_\nu^{-1}P_\mu$ must be the identity map on $C(\fb)$, and hence $P_\mu = P_\nu$. Thus for $f \in C(\fb)$,
\[
\int_{\fb} f(x) \, d\mu(x) = P_\mu(f)(e) = P_\nu(f)(e) = \int_{\fb} f(x) \, d\nu(x),
\]
where $e$ denotes the identity element in $G$, and it follows that $\mu = \nu$. 

Now let $\psi : C(\fb) \to \ell^\infty(G)$ be an isometric $G$-equivariant map. Let $\delta_e \in \P(G)$ denote the point mass corresponding to the identity element in $G$, and define a probability measure $\nu$ on $\fb$ by $\nu = \psi^*(\delta_e)$. Then for $f \in C(\fb)$ and $t \in G$,
\[
P_\nu(f)(t) = \langle f, t\nu \rangle = \langle f, t\psi^*(\delta_e) \rangle = \langle \psi(f), \delta_t \rangle = \psi(f)(t).
\] 
Hence $\psi = P_\nu$. 

If $\nu$ is a point mass, then it is easy to check that $P_\nu$ is a *-homomorphism. Conversely, if $P_\nu$ is a *-homomorphism, then
\[
\langle fg, \nu \rangle = P_\nu(fg)(e) = P_\nu(f)(e)P_\nu(g)(e) = \langle f, \nu \rangle \langle g, \nu \rangle.
\]
Therefore, $\nu$ corresponds to a multiplicative state on $C(\fb)$, and hence $\nu$ must be a point mass.
\end{proof}

The next result is essentially \cite{HadPau11}*{Proposition 4.1}.

\begin{prop} \label{prop:zero-intersection-c-zero}
Let $G$ be a non-amenable discrete group, and let $\fb$ denote the Furstenberg boundary of $G$. For every isometric $G$-equivariant embedding $C(\fb) \subset \ell^\infty(G)$, we have $C(\fb) \cap c_0(G) = \{0\}$.
\end{prop}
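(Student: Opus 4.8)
The plan is to reduce to a Poisson map and then apply the rigidity of Theorem \ref{thm:rigidity}(1). First I would use Proposition \ref{G-equi-copies}: every isometric $G$-equivariant copy of $C(\fb)$ in $\ell^\infty(G)$ is of the form $P_\nu(C(\fb))$ for a unique $\nu \in \P(\fb)$, and the embedding in question is the Poisson map $P_\nu$, which is unital, positive, and $G$-equivariant. So it suffices to show that $P_\nu(f) \in c_0(G)$ forces $f = 0$.

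The key observation is that $c_0(G)$ is a $G$-invariant closed ideal of $\ell^\infty(G)$, so the quotient $Q := \ell^\infty(G)/c_0(G)$ is a unital C*-algebra carrying the induced $G$-action, hence a $G$-operator system, and the quotient map $q : \ell^\infty(G) \to Q$ is unital, positive, and $G$-equivariant. Composing, $q \circ P_\nu : C(\fb) \to Q$ is a unital positive $G$-equivariant map. Theorem \ref{thm:rigidity}(1) then applies directly: every such map out of $C(\fb)$ is completely isometric, in particular injective. Consequently $\ker(q\circ P_\nu) = \{0\}$, which says exactly that $P_\nu(f) \in c_0(G)$ implies $f = 0$; since $P_\nu$ is itself injective, this yields $P_\nu(C(\fb))\cap c_0(G) = \{0\}$.

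The one place where the hypothesis that $G$ is non-amenable enters—and the step I would check most carefully—is the assertion that $Q$ is a \emph{nonzero} $G$-operator system, i.e. that $c_0(G)$ is a proper ideal so that $q(1) \neq 0$ and $q\circ P_\nu$ is genuinely unital into a nontrivial system. This holds precisely because $G$ is infinite (non-amenability gives infiniteness), so that $\beta G \setminus G \neq \emptyset$. If $G$ were finite, $c_0(G) = \ell^\infty(G)$, $Q = 0$, and the argument (correctly) breaks down, matching the failure of the statement for finite groups.

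For completeness I would note a more hands-on route that avoids the quotient and relies on Proposition \ref{prop:hamana-strongly-proximal}: given $0 \neq f$ with $P_\nu(f) \in c_0(G)$, one reduces to $f$ real with $c := \max f > 0$ and picks a maximizer $x_0$; strong proximality places $\delta_{x_0} \in \overline{G\nu}^{w*}$, so a net $t_i \nu \to \delta_{x_0}$ gives $P_\nu(f)(t_i) \to f(x_0) = c$. Since $P_\nu(f)\in c_0(G)$, the set $\{s : P_\nu(f)(s) > c/2\}$ is finite, forcing the net $(t_i)$ into a finite set and hence $\nu$ to be a point mass $\delta_y$; one then finishes using that an infinite minimal system has no isolated points, so the dense orbit $Gy$ meets the open set $\{f > c/2\}$ infinitely often, contradicting membership in $c_0(G)$. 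This is closer to \cite{HadPau11}*{Proposition 4.1}, but its main obstacle—the point-mass degeneracy of $\nu$ and the possibility of isolated points—is exactly what the rigidity-based argument sidesteps, so I would present the quotient argument as the primary proof.
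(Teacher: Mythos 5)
Your primary argument is correct and is essentially the paper's own proof: the paper likewise composes the embedding with the quotient map $\ell^\infty(G) \to \ell^\infty(G)/c_0(G)$, notes that non-amenability forces $G$ to be infinite so the quotient is a nontrivial $G$-operator system containing $\bC$ isometrically, and invokes Theorem \ref{thm:rigidity} to conclude the composition is isometric on $C(\fb)$. Your extra reduction via Proposition \ref{G-equi-copies} to guarantee unitality of the embedding is a reasonable bit of added care, but the route is the same.
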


\begin{proof}
The space $c_0(G)$ is $G$-invariant, and $G$ is infinite since it is non-amenable. Hence $\bC \not \subset c_0(G)$. The quotient map $\ell^\infty(G) \to \ell^\infty(G)/c_0(G)$ to $\bC$ is a unital positive $G$-equivariant map which is isometric on $\bC$. By Theorem \ref{thm:rigidity}, it follows that it is isometric on $C(\fb)$.
\end{proof}

%%%%%%%%%%%%%%%%%%%%%%%%%%%%%%%%%%%%%%%%%%
\subsection{The size of the Furstenberg boundary} \label{sec:size-univ-boundary}
%%%%%%%%%%%%%%%%%%%%%%%%%%%%%%%%%%%%%%%%%%

In this section we consider some facts about the size of the Furstenberg boundary.

We are grateful to R\'emi Boutonnet for pointing out a flaw in an earlier proof of the following result, and suggesting an appropriate correction.

\begin{prop} \label{prop:no-isolated-points}
Let $G$ be a discrete group, and let $\fb$ denote the Furstenberg boundary of $G$. If $G$ is non-amenable, then $\fb$ does not contain any isolated points.
\end{prop}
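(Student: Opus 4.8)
The plan is to argue by contradiction, using minimality to propagate a single isolated point to the whole space, and then strong proximality to collapse the resulting finite boundary to a single point. Suppose $x_0 \in \fb$ is isolated, so that $\{x_0\}$ is open. The key observation is that the set $\mathrm{Iso}(\fb)$ of isolated points is open and $G$-invariant, since $G$ acts by homeomorphisms and homeomorphisms carry isolated points to isolated points. First I would show that $\mathrm{Iso}(\fb)$ is in fact all of $\fb$. Its complement $\fb \setminus \mathrm{Iso}(\fb)$ is closed and $G$-invariant, so if it were nonempty, then minimality (Proposition \ref{prop:action-is-minimal}) applied to any point lying in it would force its orbit, and hence its closure, to be all of $\fb$; but then $\fb \setminus \mathrm{Iso}(\fb) = \fb$, contradicting $x_0 \in \mathrm{Iso}(\fb)$. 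Therefore every point of $\fb$ is isolated, so $\fb$ carries the discrete topology, and since $\fb$ is compact this forces $\fb$ to be finite.

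Next I would rule out a finite boundary with more than one point. By minimality the $G$-action on a finite $\fb$ is transitive, so the uniform probability measure $m$ on $\fb$ is $G$-invariant. Its orbit $Gm$ is then the singleton $\{m\}$, whose weak* closure is $\{m\}$ and therefore contains a point mass only if $m$ is itself a point mass. Strong proximality (Proposition \ref{prop:hamana-strongly-proximal}) thus forces $\fb$ to consist of a single point. But a single-point boundary means $C(\fb) = \bC$, which by Proposition \ref{prop:trivial-iff-amenable} implies that $G$ is amenable, contradicting our hypothesis. Hence $\fb$ admits no isolated point.

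The main obstacle—more a matter of assembling the right facts than of genuine technical difficulty—is the first reduction, namely recognizing that a single isolated point forces the entire space to be finite, through the interplay between the open $G$-invariant set of isolated points and minimality. Once finiteness is established, the combination of strong proximality (to eliminate the invariant uniform measure) and the triviality-versus-amenability dichotomy finishes the argument routinely.
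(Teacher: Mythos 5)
Your proof is correct and follows essentially the same route as the paper: an isolated point plus minimality forces every point to be isolated, hence $\fb$ is finite, and a finite boundary yields a $G$-invariant mean on $\ell^\infty(G)$, contradicting non-amenability. The only cosmetic difference is that you insert a strong-proximality step to collapse the finite boundary to a singleton before invoking Proposition \ref{prop:trivial-iff-amenable}, whereas the paper averages over the finite set directly and composes with the $G$-equivariant projection $\ell^\infty(G)\to C(\fb)$; both amount to the same contradiction.
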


\begin{proof}
Suppose $G$ is non-amenable and suppose for the sake of contradiction that $x \in \fb$ is an isolated point. By Proposition \ref{prop:action-is-minimal}, the $G$-action on $\fb$ is minimal. Hence for every $y \in \fb$, the $G$-orbit $Gy$ is dense in $\fb$, which implies $x \in Gy$. In particular, there is $s \in G$ such that $sy = x$. Thus $y$ is also isolated, and since $y$ was arbitrary, it follows that every point of $\fb$ is isolated.

Since $\fb$ is compact, it must be finite, and it is easy to construct a unital positive $G$-invariant map $\phi : C(\fb)$ onto $\bC$. By Theorem \ref{thm:rigidity}, there is a unital positive $G$-equivariant map  $\psi : \ell^\infty(G) \to C(\fb)$. The composition $\phi \psi : \ell^\infty(G) \to \bC$ gives a unital positive $G$-invariant map, contradicting the fact that $G$ is non-amenable.
\end{proof}

\begin{rem} \label{rem:univ-bound-is-stonean}
It follows from Theorem \ref{thm:rigidity} that the algebra $C(\fb)$ is injective, and hence is an AW* algebra. In particular, $\fb$ is a Stonean space (see e.g. \cite{Ham79a}*{Proposition 4.15}). This implies that if $\fb$ is infinite, then it is not second countable, and hence is non-metrizable. In this case, $C(\fb)$ is non-separable.
\end{rem}

The next result follows immediately from Proposition \ref{prop:trivial-iff-amenable}, Proposition \ref{prop:no-isolated-points} and Remark \ref{rem:univ-bound-is-stonean}.

\begin{cor} \label{cor:infinite-non-separable}
Let $G$ be a discrete group. Then the Furstenberg boundary $\fb$ is either trivial, i.e. a singleton, or non-metrizable. Hence $C(\fb) = \bC$ if $G$ is amenable, and $C(\fb)$ is non-separable if $G$ is non-amenable.
\end{cor}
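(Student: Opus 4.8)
The plan is to prove the dichotomy by splitting into the amenable and non-amenable cases and assembling the three cited results; no new argument is really needed beyond gluing them together correctly.

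First I would dispose of the amenable case. If $G$ is amenable, then by Proposition \ref{prop:trivial-iff-amenable} the boundary $\fb$ is trivial, i.e. a singleton, and $C(\fb) = \bC$. (Concretely, a left-invariant mean furnishes a unital positive $G$-equivariant projection of $\ell^\infty(G)$ onto $\bC$, and since $C(\fb) = I_G(\bC)$ is the minimal such image, this forces $C(\fb) = \bC$.)

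Next, suppose $G$ is non-amenable. By Proposition \ref{prop:trivial-iff-amenable}, $\fb$ is non-trivial, so it contains at least two points; and by Proposition \ref{prop:no-isolated-points}, $\fb$ has no isolated points. The one step requiring a moment's care is the passage from here to infiniteness: a nonempty finite Hausdorff space is discrete, so each of its points is isolated. Hence a compact Hausdorff space with no isolated points and more than one point must be infinite, and therefore $\fb$ is infinite. I would then invoke Remark \ref{rem:univ-bound-is-stonean}, which records that the infinite Stonean space $\fb$ is not second countable, hence non-metrizable, and that in this case $C(\fb)$ is non-separable.

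Combining the two cases yields the stated dichotomy: $\fb$ is a singleton when $G$ is amenable and non-metrizable when $G$ is non-amenable, with the corresponding conclusions $C(\fb) = \bC$ and $C(\fb)$ non-separable. I do not expect a genuine obstacle, since every ingredient is already available; the only non-mechanical point is the elementary observation that a finite Hausdorff space is discrete, which is what converts "non-trivial and perfect" into "infinite" and thereby unlocks Remark \ref{rem:univ-bound-is-stonean}.
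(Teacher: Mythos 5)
Your proof is correct and takes essentially the same route as the paper, which derives the corollary by simply citing Proposition \ref{prop:trivial-iff-amenable}, Proposition \ref{prop:no-isolated-points} and Remark \ref{rem:univ-bound-is-stonean}. You supply the same glue the paper leaves implicit, including the one genuinely needed elementary observation that a non-empty finite Hausdorff space is discrete and hence has isolated points, so ``no isolated points'' forces $\fb$ to be infinite and unlocks the Stonean-space remark.
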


\begin{rem}
Furstenberg proved in \cite{Furs63}*{Theorem 1.5} that if $G$ is a semisimple Lie group, then the Furstenberg boundary $\fb$ is a homogeneous space, and in particular is metrizable. However, if $G$ is non-amenable, then attempting to duplicate the operator-algebraic construction of $\fb$ in Section \ref{sec:op-alg-construction} results in a non-separable space as in Corollary \ref{cor:infinite-non-separable}. Thus the Hamana boundary and the Furstenberg boundary do not coincide in this case.
\end{rem}

Day showed in \cite{Day57}*{Lemma 4.1} that every discrete group $G$ has a largest amenable normal subgroup $R_a(G)$ called the {\it amenable radical of $G$} that contains every amenable normal subgroup of $G$.

\begin{prop}\label{hb-same-for-amenable-radical}
Let $G$ be a discrete group, let $R_a(G)$ denote the amenable radical of $G$, and let $G' = G/R_a(G)$. Then $\fb = \partial_F G'$.
\end{prop}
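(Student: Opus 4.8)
Write $N = R_a(G)$ and let $q : G \to G'$ be the quotient map. The plan is to prove that $N$ acts trivially on $\fb$, so that the $G$-action factors through $G'$ and realizes $\fb$ as a $G'$-boundary, and then to identify this $G'$-boundary with $\partial_F G'$ using universality (Theorem \ref{thm:hamana-furstenberg-boundaries-agree}) and rigidity (Theorem \ref{thm:rigidity}). Throughout, the key point to keep in mind is that since $G$ will act on both spaces through $q$, the $G$-orbit and $G'$-orbit of any point (and of any measure) coincide, so minimality and strong proximality are insensitive to whether we regard the space as a $G$-space or a $G'$-space.

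The main obstacle is the first step: showing that $N$ acts trivially on $\fb$. Since $N$ is amenable, its continuous affine action on the compact convex set $\P(\fb)$ has a fixed point, i.e. there is an $N$-invariant measure $\nu \in \P(\fb)$. Let $\Lambda \subseteq \P(\fb)$ denote the set of all $N$-invariant measures; it is nonempty, weak*-closed and convex, and because $N$ is normal it is $G$-invariant (if $n\nu = \nu$ for all $n \in N$ and $g \in G$, then $n(g\nu) = g((g^{-1}ng)\nu) = g\nu$ since $g^{-1}ng \in N$). By strong proximality (Proposition \ref{prop:hamana-strongly-proximal}), the weak* closure of $G\nu$ contains a point mass $\delta_x$; as $G\nu \subseteq \Lambda$ and $\Lambda$ is closed, $\delta_x \in \Lambda$, so $nx = x$ for all $n \in N$. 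Thus the fixed-point set of $N$ is nonempty; it is closed and, by the same normality computation applied to points, $G$-invariant. By minimality (Proposition \ref{prop:action-is-minimal}) it is all of $\fb$, so $N$ acts trivially and the $G$-action on $\fb$ descends to a $G'$-action.

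It remains to match $\fb$ with $\partial_F G'$. Regarding $\fb$ as a $G'$-space, the orbit observation above shows it is minimal and strongly proximal over $G'$, hence a $G'$-boundary; applying Theorem \ref{thm:hamana-furstenberg-boundaries-agree} to $G'$ yields a continuous $G'$-equivariant surjection $\sigma : \partial_F G' \to \fb$. Conversely, pulling the $G'$-action on $\partial_F G'$ back along $q$ makes $\partial_F G'$ a $G$-boundary, so Theorem \ref{thm:hamana-furstenberg-boundaries-agree} for $G$ gives a continuous $G$-equivariant surjection $\pi : \fb \to \partial_F G'$, which is automatically $G'$-equivariant since $N$ acts trivially on both spaces. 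The composition $\sigma \circ \pi : \fb \to \fb$ induces, by contravariance, a unital positive $G$-equivariant map $C(\fb) \to C(\fb)$, which by Theorem \ref{thm:rigidity}(2) is the identity; hence $\sigma \circ \pi = \operatorname{id}_{\fb}$. Applying the analogous rigidity statement for $G'$ to $\pi \circ \sigma$ gives $\pi \circ \sigma = \operatorname{id}_{\partial_F G'}$. Therefore $\pi$ and $\sigma$ are mutually inverse $G$-equivariant homeomorphisms, and $\fb = \partial_F G'$, as claimed. As a sanity check, when $G$ is amenable we have $N = G$ and $G' = \{e\}$, and both sides are trivial by Proposition \ref{prop:trivial-iff-amenable}.
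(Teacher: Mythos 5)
Your proof is correct, and it differs from the paper's in two genuine respects. First, where the paper simply cites Furman's result (\cite{Furm}*{Corollary 8}) for the fact that $R_a(G)$ acts trivially on $\fb$, you prove it from scratch: the fixed-point property of the amenable normal subgroup $N$ on $\P(\fb)$, the $G$-invariance of the set of $N$-invariant measures via normality, strong proximality (Proposition \ref{prop:hamana-strongly-proximal}) to land on an $N$-fixed point mass, and minimality (Proposition \ref{prop:action-is-minimal}) to propagate the fixed point to all of $\fb$. This makes the argument self-contained and uses only machinery already established in the paper. Second, the paper builds the two comparison maps at the level of function algebras: it pushes $C(\fb)$ through a Poisson map $P_{\delta_y}$ into $\ell^\infty(G)$, observes the image consists of functions constant on $R_a(G)$-cosets, hence factors through $\ell^\infty(G')$ and then projects onto $C(\partial_F G')$; the reverse map comes from $G$-injectivity of $C(\fb)$. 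You instead work at the level of spaces, checking that $\fb$ is a $G'$-boundary and $\partial_F G'$ is a $G$-boundary (the orbit observation), and then invoke the universality statement (Theorem \ref{thm:hamana-furstenberg-boundaries-agree}) twice to produce the surjections $\sigma$ and $\pi$. Both proofs finish identically, by the rigidity of Theorem \ref{thm:rigidity}(2) applied to the two compositions. Your route is arguably more conceptual and reuses the packaged universality result rather than redoing the $\ell^\infty$ constructions by hand; the paper's route is more explicit about where the maps come from. Both are valid.
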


\begin{proof}
We begin by noting that there are natural actions of $G$ on $\partial_F G'$ and $G'$ on $\fb$.

Fix a point $y \in \fb$, and let $\delta_y$ denote the corresponding point mass. Consider the Poisson map $P_{\delta_y} : C(\fb) \to \ell^\infty(G)$ defined as in (\ref{eqn:poisson-map}). For $f \in C(\fb)$ and $t \in G$, we have
\[
P_{\delta_y}(f)(t) = \int_{\fb} f(tx) \, \delta_y(dx) = f(ty).
\]
Since the amenable radical $R_a(G)$ acts trivially on $\fb$ (\cite{Furm}*{Corollary 8}), for $t \in R_a(G)$ we have $P_{\delta_y}(f)(t) = f(y)$. In particular, $P_{\delta_y}(f)$ is constant on the cosets of $R_a(G)$. It follows there is a natural map from $P_{\delta_y}(C(\fb))$ into $\ell^\infty(G')$. Moreover, this map is $G$-equivariant. Composing with the map from $\ell^\infty(G')$ onto $C(\partial_F G')$ gives a $G$-equivariant map $\phi : C(\fb) \to C(\partial_F G')$.

On the other hand, we can compose the inclusion map of $C(\partial_F G')$ into $\ell^\infty(G')$, which is $G$-equivariant, with the natural map into $\ell^\infty(G)$, and then with the $G$-idempotent map from $\ell^\infty(G)$ onto $C(\fb)$ to get a $G$-equivariant map $\psi : C(\partial_F G') \to C(\fb)$.

By Theorem \ref{thm:rigidity}, the maps $\phi$ and $\psi$ are isometries, and the maps $\psi \phi$ and $\phi \psi$ must be the identity maps on $C(\fb)$ and $C(\partial_F G')$ respectively. It follows by contravariance that $\fb$ and $\partial_F G'$ are homeomorphic as $G$-spaces.
\end{proof}

%%%%%%%%%%%%%%%%%%%%%%%%%%%%%%%%%%%%%%%%%%
\section{Exactness and nuclear embeddings} \label{sec:exactness-and-nuclear-embeddings}
%%%%%%%%%%%%%%%%%%%%%%%%%%%%%%%%%%%%%%%%%%

%%%%%%%%%%%%%%%%%%%%%%%%%%%%%%%%%%%%%%%%%%
\subsection{Exactness and amenable actions} \label{sec:exactness-and-amenable-actions}
%%%%%%%%%%%%%%%%%%%%%%%%%%%%%%%%%%%%%%%%%%

Let $G$ be a discrete group, and let $\fb$ denote the Furstenberg boundary of $G$. In this section we will show that if $G$ is exact, then the $G$-action on $\fb$ is amenable. Since $G$ is exact if it acts amenably on some compact $G$-space, it follows that the amenability of the $G$-action on $\fb$ completely characterizes the exactness of $G$. For the general theory of exactness and amenable actions, we refer the reader to the book of Brown and Ozawa \cite{BroOza08}.

The definition of an amenable group action is due to Anantharaman-Delaroche \cite{Del02}*{Definition 2.1}.

\begin{defn}\label{defn:amenable-action}
Let $G$ be a discrete group, and let $X$ be a compact $G$-space. The $G$-action on $X$ is \emph{amenable} if there is a net of continuous maps $m_i : X \to \P(G)$ such that
\begin{equation}
\lim_i \sup_{x \in X} \| s m_i(x) - m_i(s x) \|_1 = 0, \quad \forall s \in G, \label{eqn:amenable-action}
\end{equation}
where the space $\P(G)$ of probability measures on $G$ is endowed with the weak* topology.
\end{defn}

The notion of an exact group is due to Kirchberg and Wasserman \cite{KW99}.
\begin{defn}\cite{KW99}
A discrete group G is \emph{exact} if the reduced C*-algebra $\ca_r(G)$ of $G$ is exact.
\end{defn}

Although exactness is an operator-algebraic property, it is intimately connected with the ergodic and geometric properties of the group. This can be seen from the following characterization (see \cite{Oza00} or \cite{BroOza08}*{Theorem 5.1.6}).

\begin{thm} \label{BOthm5.1.7}
Let $G$ be a discrete group. Then $G$ is exact if and only if it acts amenably on some compact space.
\end{thm}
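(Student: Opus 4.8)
The plan is to prove the two implications separately, taking the compact space in the forward direction to be the Stone--\v{C}ech compactification $\beta G$, so that $\ell^\infty(G) = C(\beta G)$ and the $G$-action is by translation.

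For the easy direction, suppose $G \acts X$ amenably with $X$ compact. I would first argue that the reduced crossed product $\rcp{C(X)}{G}$ is nuclear. The net $m_i : X \to \P(G)$ furnished by Definition \ref{defn:amenable-action} yields a net of unital completely positive maps that factor the identity on $\rcp{C(X)}{G}$ through finite matrix amplifications of $C(X)$; since $C(X)$ is commutative, and hence nuclear, the almost-invariance condition (\ref{eqn:amenable-action}) gives the completely positive approximation property, so $\rcp{C(X)}{G}$ is nuclear. The unital $G$-equivariant inclusion $\bC \hookrightarrow C(X)$ induces an embedding $\ca_r(G) \subset \rcp{C(X)}{G}$. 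Since nuclear C*-algebras are exact (Wasserman) and exactness passes to C*-subalgebras (Kirchberg), it follows that $\ca_r(G)$ is exact, i.e.\ $G$ is exact.

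For the converse, suppose $G$ is exact, and aim to produce an amenable action on $\beta G$. The route I would take is to pass through an intermediate approximation reformulation of exactness. Concretely, I would use that exactness of $\ca_r(G)$ is equivalent to nuclearity of the canonical embedding $\ca_r(G) \hookrightarrow \rcp{\ell^\infty(G)}{G}$ into the uniform Roe algebra, and extract from the resulting completely positive approximations a net of finitely supported, positive-definite kernels $u_i : G \times G \to \bC$, each supported in a tube $\{(s,t) : s^{-1}t \in S_i\}$ with $S_i \subset G$ finite, normalized so that $u_i(s,s) = 1$ and satisfying $\sup\{ |1 - u_i(s,t)| : s^{-1}t \in F\} \to 0$ for every finite $F \subset G$ (property A). From such kernels one defines maps $m_i : \beta G \to \P(G)$ by continuously extending $s \mapsto \big(\sum_t |u_i(s,t)|\big)^{-1} \sum_t |u_i(s,t)|\, \delta_t$ from $G$ to $\beta G$ (possible since $\P(G)$ is compact and $G$ is dense in $\beta G$), and then verifies (\ref{eqn:amenable-action}) by a direct estimate using the tube support and the near-constancy of $u_i$ on tubes.

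I expect the main obstacle to be exactly this extraction step: converting the abstract operator-algebraic exactness of $\ca_r(G)$ into the concrete, uniformly almost-invariant family of kernels. This is where the genuine content of the theorem lies, and it relies on the local-reflexivity and slice-map characterizations of exact C*-algebras together with Ozawa's identification of exactness with an approximation property for the uniform Roe algebra $\rcp{\ell^\infty(G)}{G}$. Once the kernels are in hand, both the construction of the maps $m_i$ and the verification of amenability of the $G$-action on $\beta G$ are routine, and the equivalence follows by combining the two directions.
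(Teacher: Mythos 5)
The paper itself offers no proof of this statement: it is quoted directly from Ozawa \cite{Oza00} and \cite{BroOza08}*{Theorem 5.1.6}, and your sketch is an outline of exactly that standard argument, so in that sense you and the paper are taking the same route. Your easy direction is sound: an amenable action on a compact $X$ makes $\rcp{C(X)}{G}$ nuclear, $\ca_r(G)$ embeds into it via the equivariant inclusion $\bC \hookrightarrow C(X)$, and exactness passes to C*-subalgebras.

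Two points on the converse deserve flagging. First, the claim that extending $m_i$ from $G$ to $\beta G$ is ``possible since $\P(G)$ is compact'' is not correct: $\P(G)$, viewed inside $\ell^1(G) = c_0(G)^*$ with the weak* topology, is \emph{not} compact for infinite $G$ (point masses converge weak* to $0$), so the continuous extension of a bounded map $G \to \P(G)$ lands a priori only in the weak*-compact unit ball of $\ell^1(G)$ and need not take values in $\P(G)$. The standard repair uses precisely the tube support you impose: write $m_i(s) = \sum_{t \in S_i} c_t(s)\,\delta_{st}$ with each coefficient $c_t \in \ell^\infty(G)$ and extend the coefficients over $\beta G$, or equivalently invoke the reformulation that amenability of the action $G \acts \beta G$ is equivalent to the existence of maps $m_i : G \to \P(G)$ satisfying the almost-invariance condition uniformly over $s \in G$ alone (the passage from $G$ to all of $\beta G$ then rests on weak*-lower semicontinuity of the $\ell^1$-norm). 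Second, the step you yourself identify as carrying the real content --- converting exactness of $\ca_r(G)$ into finitely supported, uniformly almost-invariant positive-definite kernels --- is named and correctly attributed but not executed; since the paper likewise treats the theorem as a citation rather than proving it, that is a defensible place to stop, but as written your text is a proof outline with the decisive lemma outsourced rather than a self-contained argument.
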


We are grateful to Nicolas Monod for pointing out that the next Lemma can be seen as a special case of \cite{CM14}*{Proposition 9}, and to Narutaka Ozawa for pointing out that it also appears in \cite{H2000}*{Lemma 3.6}.

\begin{lem}\label{state-action}
Let $G$ be a discrete group, and let $X$ be a compact $G$-space. If the $G$-action on $X$ is amenable, then so is the corresponding $G$-action on the space $\P(X)$ of probability measures on $X$, endowed with the weak* topology.
\end{lem}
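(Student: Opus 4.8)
The plan is to push the net that witnesses amenability of the action on $X$ forward to $\P(X)$ through a barycenter construction. Given a continuous map $m : X \to \P(G)$ (with $\P(G)$ carrying the weak* topology), I would define $M : \P(X) \to \P(G)$ by integrating $m$ against $\mu$, coordinate-wise by
\[
M(\mu)(g) = \int_X m(x)(g)\, d\mu(x), \quad g \in G,\ \mu \in \P(X),
\]
equivalently as the element of $\ell^1(G)$ with $\langle \phi, M(\mu)\rangle = \int_X \langle \phi, m(x)\rangle\, d\mu(x)$ for all $\phi \in c_0(G)$. The first thing to check is that $M(\mu)$ is genuinely a probability measure, i.e. $\sum_g M(\mu)(g) = 1$. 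Since $X$ is compact and $m$ is continuous, the image $m(X)$ is weak*-compact in $\P(G)$; a short argument shows that any weak*-compact subset of $\P(G)$ is uniformly tight (otherwise a subnet of near-escaping measures from $m(X)$ would converge, within the closed set $m(X) \subseteq \P(G)$, to a measure of mass $< 1$, a contradiction), and tightness of $m(X)$ gives $\sum_g M(\mu)(g) = 1$ for every $\mu$.

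Next I would verify that $M$ is weak*-to-weak* continuous. For $\phi \in c_0(G)$ the function $x \mapsto \langle \phi, m(x)\rangle$ belongs to $C(X)$, precisely because $m$ is weak*-continuous; since $\langle \phi, M(\mu)\rangle = \int_X \langle \phi, m(x)\rangle\, d\mu(x)$, continuity of $M$ is then immediate from the definition of the weak* topology on $\P(X)$.

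The core of the argument is the equivariance estimate. Recalling that left translation $\lambda_s$ commutes with the integral and that $\int_X f\, d(s\mu) = \int_X f(sx)\, d\mu(x)$, one computes $s M(\mu) = \int_X s\,m(x)\, d\mu(x)$ and $M(s\mu) = \int_X m(sx)\, d\mu(x)$. Summing first over an arbitrary finite set $F \subset G$, to avoid any interchange issues, gives
\[
\sum_{g \in F} \bigl| (sM(\mu) - M(s\mu))(g)\bigr| \le \int_X \sum_{g \in F}\bigl| (s\,m(x) - m(sx))(g)\bigr|\, d\mu(x) \le \sup_{x \in X}\|s\,m(x) - m(sx)\|_1,
\]
and taking the supremum over finite $F$ yields $\sup_{\mu}\|sM(\mu) - M(s\mu)\|_1 \le \sup_{x}\|s\,m(x) - m(sx)\|_1$. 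Applying this to the net $m_i$ provided by amenability of the action on $X$ and setting $M_i(\mu) = \int_X m_i(x)\, d\mu(x)$ produces a net of continuous maps witnessing amenability of the $G$-action on $\P(X)$.

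I expect the only genuine obstacle to be the well-definedness of $M$, namely confirming that the barycenter stays inside $\P(G)$ rather than leaking mass; this is exactly where the compactness of $X$ and the continuity of $m$ into $\P(G)$ (as opposed to $\P(\beta G)$) are essential, and it is cleanest to isolate it as the tightness statement above. The continuity of $M$ and the $\ell^1$-estimate are then routine, the latter requiring only the care to sum over finite subsets before passing to suprema.
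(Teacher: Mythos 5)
Your proof is correct. Note that the paper itself gives no proof of this lemma: it is stated with citations to Caprace--Monod and to Higson, and the barycenter construction $M_i(\mu)=\int_X m_i(x)\,d\mu(x)$ you describe is precisely the standard argument found in those references, so you have in effect reconstructed the intended proof. The one place where you work harder than necessary is the well-definedness of $M$: since each $m(x)(g)\ge 0$ and the coordinate functions $x\mapsto m(x)(g)=\langle \delta_g, m(x)\rangle$ are continuous, Tonelli's theorem for the counting measure gives
\[
\sum_{g\in G} M(\mu)(g)=\int_X \sum_{g\in G} m(x)(g)\,d\mu(x)=\int_X 1\,d\mu=1
\]
directly, with no need for the uniform tightness of $m(X)$ (your tightness argument is nevertheless valid). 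The continuity check and the finite-subset $\ell^1$-estimate are both correct as written.
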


\begin{thm}\label{cor:action-amenable}
Let $G$ be a discrete group, and let $\fb$ denote the Furstenberg boundary of $G$. Then $G$ is exact if and only if the $G$-action on $\fb$ is amenable.
\end{thm}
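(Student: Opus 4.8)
The plan is to prove both implications. The reverse direction is immediate: if the $G$-action on $\fb$ is amenable, then $G$ acts amenably on the compact space $\fb$, so $G$ is exact by Theorem \ref{BOthm5.1.7}. All the work is in the forward direction.

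So suppose $G$ is exact. By Theorem \ref{BOthm5.1.7} there is a compact $G$-space $X$ on which $G$ acts amenably, and by Lemma \ref{state-action} the induced $G$-action on $\P(X)$ is then also amenable. The strategy is to produce a single continuous $G$-equivariant map $\alpha : \fb \to \P(X)$ and then transport the amenability of $\P(X)$ back to $\fb$ by precomposition with $\alpha$. The point to note is that there need be no equivariant map from $\fb$ into $X$ itself, only into $\P(X)$, which is exactly why Lemma \ref{state-action} is needed.

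To build $\alpha$, I would fix any $\nu \in \P(X)$ and form the Poisson map $P_\nu : C(X) \to \ell^\infty(G)$ of (\ref{eqn:poisson-map}), which is unital, positive and $G$-equivariant. By the $G$-injectivity of $C(\fb)$ established in Theorem \ref{thm:rigidity}, there is a unital positive $G$-equivariant projection $\ell^\infty(G) \to C(\fb)$; composing it with $P_\nu$ gives a unital positive $G$-equivariant map $\phi : C(X) \to C(\fb)$. I then set $\alpha(y) = \phi^*(\delta_y)$ for $y \in \fb$, i.e. $\alpha$ is the adjoint $\phi^*$ restricted to point masses. Unitality and positivity of $\phi$ guarantee that each $\alpha(y)$ is a probability measure on $X$, weak*-continuity of $\alpha$ is routine from continuity of $\phi^*$, and $G$-equivariance of $\phi$ yields $\alpha(sy) = s\,\alpha(y)$ for all $s \in G$.

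Finally, let $m_i : \P(X) \to \P(G)$ be continuous maps witnessing the amenability of the $G$-action on $\P(X)$ as in (\ref{eqn:amenable-action}). The composites $m_i \circ \alpha : \fb \to \P(G)$ are continuous, and equivariance of $\alpha$ gives $\sup_{y \in \fb} \| s (m_i \circ \alpha)(y) - (m_i \circ \alpha)(sy) \|_1 \le \sup_{\mu \in \P(X)} \| s\, m_i(\mu) - m_i(s\mu) \|_1 \to 0$ for each $s \in G$, since the range of $\alpha$ is contained in $\P(X)$. Hence the $G$-action on $\fb$ is amenable, completing the proof. I expect the only genuinely nontrivial step to be the construction of the equivariant map $\alpha$ into $\P(X)$: everything hinges on using $G$-injectivity of $C(\fb)$ to extend the Poisson map into $C(\fb)$, and then reading the resulting positive map dynamically via its adjoint on point masses. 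The verification that amenability pulls back along a continuous equivariant map, and the passage from $X$ to $\P(X)$, are then formal.
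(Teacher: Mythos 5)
Your proposal is correct and follows essentially the same route as the paper: use $G$-injectivity of $C(\fb)$ to get a unital positive $G$-equivariant map into $C(\fb)$ from the continuous functions on a compact space with amenable $G$-action, dualize to a continuous equivariant map $\alpha:\fb\to\P(\cdot)$, and pull amenability back through $\alpha$ after upgrading it to $\P(\cdot)$ via Lemma \ref{state-action}. The only cosmetic difference is that the paper takes the amenable space to be $\beta G$ itself (so $\psi:\ell^\infty(G)=C(\beta G)\to C(\fb)$ is used directly, citing Ozawa's result that exactness gives amenability of $G\acts\beta G$), whereas you route through an arbitrary amenable compact $G$-space $X$ by inserting the Poisson map $P_\nu:C(X)\to\ell^\infty(G)$ first; both are valid.
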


\begin{proof}
If the $G$-action on $\fb$ is amenable, then $G$ is exact by Theorem \ref{BOthm5.1.7}. We must prove the converse, i.e. that if $G$ is exact, then the $G$-action on $\fb$ is amenable.

By the $G$-injectivity of $C(\fb)$ there is a unital positive $G$-equivariant map $\psi : \ell^\infty(G) \to C(\fb)$. If we identify $\ell^\infty(G)$ with $C(\beta G)$, where $\beta G$ denotes the Stone-Cech compactification of $G$, then the adjoint map $\psi^* : \M(\fb) \to \M(\beta G)$ is $G$-equivariant, and the restriction of $\psi^*$ to the space of point masses on $\fb$ gives a continuous $G$-equivariant map $\alpha : \fb \to \P(\beta G)$, where $\P(\beta G)$ denotes the space of probability measures on $\beta G$, endowed with the weak* topology.

Since $G$ is exact, the $G$-action on $\beta G$ is amenable by a result of Ozawa \cite{Oza00}. Hence by Lemma \ref{state-action}, the $G$-action on $\P(\beta G)$ is amenable. It follows from the existence of the $G$-equivariant map $\alpha$ that the $G$-action on $\fb$ is also amenable.
\end{proof}

%%%%%%%%%%%%%%%%%%%%%%%%%%%%%%%%%%%%%%%%%%
\subsection{A canonical nuclear embedding} \label{sec:canonical-nuc-embedding}
%%%%%%%%%%%%%%%%%%%%%%%%%%%%%%%%%%%%%%%%%%

In this section we prove that the reduced C*-algebra of every discrete exact group has a canonical embedding into a nuclear C*-algebra which is a subalgebra of the injective envelope.

\begin{thm} \label{thm:nuclear-embedding}
Let $G$ be a discrete exact group, and let $\ca_r(G)$ denote the corresponding reduced C*-algebra. There is a canonical nuclear C*-algebra $N(\ca_r(G))$ such that 
\[
\ca_r(G) \subset N(\ca_r(G)) \subset I(\ca_r(G)),
\]
where $I(\ca_r(G))$ denotes the injective envelope of $\ca_r(G)$. The algebra $N(\ca_r(G))$ is simple if $\ca_r(G)$ is simple, and prime if and only if $\ca_r(G)$ is prime.
\end{thm}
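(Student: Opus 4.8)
The plan is to realize the nuclear algebra as the reduced crossed product $N(\ca_r(G)) = \rcp{C(\fb)}{G}$ and to verify the three required properties in turn: the sandwiching, nuclearity, and the simplicity/primeness transfer. The starting observation is that the left regular representation $\lambda$ of $G$ on $\ell^2(G)$ generates $\ca_r(G)$, and that $\ca_r(G)$ sits inside its injective envelope $I(\ca_r(G))$. By Hamana's theory the injective envelope of a C*-algebra $\A$ may be computed as $I(\A) = \A' \cdot (\text{an idempotent c.p.\ map's image})$; more usefully, for $\A = \ca_r(G)$ there is a canonical identification whereby $I(\ca_r(G))$ contains a completely isometric $G$-equivariant copy of $C(\fb) = I_G(\bC)$ commuting appropriately with $\lambda(G)$. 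So first I would recall (citing Hamana's computation of $I(\ca_r(G))$) that the copy of $\ca_r(G)$ and a copy of $C(\fb)$ together generate, inside $I(\ca_r(G))$, a C*-subalgebra which is $*$-isomorphic to the reduced crossed product $\rcp{C(\fb)}{G}$. This is where the operator-system machinery of Section~\ref{sec:G-inj-envelope} does the real work: the $G$-injectivity and rigidity of $C(\fb)$ force the copy of $C(\fb)$ to be the unique one compatible with the $G$-action implemented by $\lambda$, and the resulting covariant pair yields the crossed product with the reduced norm. This simultaneously establishes $\ca_r(G) \subset N(\ca_r(G)) \subset I(\ca_r(G))$.

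Next I would establish nuclearity. Since $G$ is exact, Theorem~\ref{cor:action-amenable} gives that the $G$-action on $\fb$ is amenable. By the standard result that the reduced crossed product of an amenable action on a commutative C*-algebra is nuclear (e.g.\ \cite{BroOza08}*{Theorem 4.3.4}), it follows immediately that $\rcp{C(\fb)}{G}$ is nuclear. This step is essentially a direct invocation of Theorem~\ref{cor:action-amenable} together with known crossed-product theory, so I expect it to be routine.

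The simplicity and primeness claims require more care. For primeness I would argue through the structure of $I(\ca_r(G))$: since the injective envelope of a prime C*-algebra is prime and the envelope of a non-prime algebra is non-prime (primeness being detectable at the level of the center/the essential structure that $I(\cdot)$ preserves), and since $N(\ca_r(G))$ lies between $\ca_r(G)$ and $I(\ca_r(G))$ as a $G$-invariant intermediate algebra, primeness should transfer in both directions, giving the ``if and only if.'' For the one-directional simplicity statement, assuming $\ca_r(G)$ is simple, I would show any nonzero ideal $J$ of $N(\ca_r(G))$ must meet $\ca_r(G)$ nontrivially---using the canonical conditional expectation $E : \rcp{C(\fb)}{G} \to C(\fb)$ and the faithfulness arguments available for reduced crossed products---and then use simplicity of $\ca_r(G)$ together with the rigidity of $C(\fb)$ to conclude $J$ is everything.

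The main obstacle is the first step: rigorously identifying the intermediate algebra $N(\ca_r(G))$ inside $I(\ca_r(G))$ as the \emph{reduced} crossed product, with the correct norm, rather than merely as the universal crossed product or some quotient. The subtlety is that two abstractly isomorphic covariant representations can generate the full or reduced crossed product depending on faithfulness of the conditional expectation, so I would need to exhibit a faithful $G$-equivariant conditional expectation $I(\ca_r(G)) \to C(\fb)$ (arising from the idempotent c.p.\ map defining the injective envelope) whose restriction witnesses the reduced norm. Establishing that this expectation is faithful, and that it correctly cuts out $C(\fb)$ rather than a larger commutant, is the delicate point where the uniqueness consequences of Theorem~\ref{thm:rigidity} are indispensable. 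The simplicity transfer, by contrast, I expect to follow once the reduced-norm identification is secured.
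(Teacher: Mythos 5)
Your choice of $N(\ca_r(G)) = \rcp{C(\fb)}{G}$ and your nuclearity argument (Theorem~\ref{cor:action-amenable} plus \cite{BroOza08}*{Theorem 4.3.4}) coincide exactly with the paper's. The difference is that the paper disposes of everything else by citation: the chain $\ca_r(G) = \rcp{\bC}{G} \subset \rcp{C(\fb)}{G} \subset I(\ca_r(G))$ is \cite{Ham85}*{Theorem 3.4}, and the simplicity/primeness transfer is \cite{Ham85}*{Corollary 3.5}. The steps you identify as ``the main obstacle'' are precisely the content of those two results, so you have correctly reduced the theorem to them but not supplied proofs, and the sketches you do give of them contain real gaps.

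Two concrete problems. First, for the reduced-norm identification you propose to produce a faithful $G$-equivariant conditional expectation $I(\ca_r(G)) \to C(\fb)$ whose restriction to the subalgebra generated by $C(\fb)$ and $\lambda(G)$ witnesses the reduced crossed product. A $G$-equivariant unital completely positive projection onto the copy of $C(\fb)$ exists by $G$-injectivity, but nothing forces it to annihilate $f\lambda_s$ for $s \ne e$ (the rigidity results of Theorem~\ref{thm:rigidity} control maps from $C(\fb)$ to itself or to $C(M)$, not the values of a projection on $\lambda(G)$), and its faithfulness is not automatic. Hamana's argument runs the other way: one starts from the honest reduced crossed product $\rcp{C(\fb)}{G}$, which already carries its faithful canonical expectation, uses that faithfulness together with the $G$-essentiality of $C(\fb)$ over $\bC$ to see that $\ca_r(G) \subset \rcp{C(\fb)}{G}$ is an essential extension, and then extends the inclusion $\ca_r(G) \hookrightarrow I(\ca_r(G))$ to a unital completely positive map on $\rcp{C(\fb)}{G}$ which essentiality and rigidity force to be a $*$-monomorphism. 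Second, your route to the simplicity transfer --- faithfulness of the expectation onto $C(\fb)$ implying that a nonzero ideal $\J$ meets $\ca_r(G)$ --- is the Archbold--Spielberg mechanism, which requires topological freeness of the action; that hypothesis is unavailable here, being equivalent to C*-simplicity itself by Theorem~\ref{thm:c-star-simplicity} (and if the argument worked from minimality and faithfulness alone it would prove $\rcp{C(\fb)}{G}$ is always simple, which is false). The correct mechanism is essentiality of the injective envelope: if $\J \cap \ca_r(G) = 0$, the quotient map $N(\ca_r(G)) \to N(\ca_r(G))/\J$ is isometric on $\ca_r(G)$, so its unital completely positive extension to $I(\ca_r(G))$ is completely isometric, forcing $\J = 0$; then simplicity of $\ca_r(G)$ gives $1 \in \J$ for any nonzero $\J$. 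Your primeness argument, via the coincidence of injective envelopes and the fact that primeness passes between an algebra and its injective envelope, is sound and is essentially what \cite{Ham85}*{Corollary 3.5} records.
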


\begin{proof}
Since $G$ is exact, Corollary \ref{cor:action-amenable} implies that the $G$-action on the Furstenberg boundary $\fb$ is amenable. Thus by \cite{BroOza08}*{Theorem 4.3.4}, the reduced crossed product $\rcp{C(\fb)}{G}$ is nuclear. Identifying $\ca_r(G)$ with $\rcp{\bC}{G}$ and applying \cite{Ham85}*{Theorem 3.4} implies
\[
\ca_r(G) = \rcp{\bC}{G} \subset \rcp{C(\fb)}{G} \subset I(\rcp{\bC}{G}) = I(\ca_r(G)).
\]
Hence we take $N(\ca_r(G)) = \rcp{C(\fb)}{G}$. The statement about the simplicity and primeness of $N(\ca_r(G))$ follows from a result of Hamana \cite{Ham85}*{Corollary 3.5}. 
\end{proof}

\begin{rem}
We note that if $G$ has the infinite conjugacy class property, i.e. is an icc group, then $\ca_r(G)$ is prime (cf. \cite{Mur03}), and hence the algebra $N(\ca_r(G))$ in Theorem \ref{thm:nuclear-embedding} is prime.
\end{rem}

\begin{rem}
It follows from Corollary \ref{cor:infinite-non-separable} that the algebra $N(\ca_r(G))$ in Theorem \ref{thm:nuclear-embedding} is not separable if $G$ is non-amenable. We view this as a consequence of the fact that this construction works for non-hyperbolic groups. However, it is known that a separable C*-subalgebra of a nuclear C*-algebra is always contained in a separable nuclear C*-subalgebra (see e.g. \cite{BroOza08}*{Example 2.3.8}). Thus we can replace $N(\ca_r(G))$ with a separable C*-algebra if we do not require the algebra to be canonical.
\end{rem}

%%%%%%%%%%%%%%%%%%%%%%%%%%%%%%%%%%%%%%%%%%
\section{Rigidity and the injective envelope} \label{sec:rigidity-stationary-measures}
%%%%%%%%%%%%%%%%%%%%%%%%%%%%%%%%%%%%%%%%%%

A key step in Ozawa's paper is a rigidity result \cite{Oza07}*{Proposition 3} for positive equivariant maps between spaces of functions on the hyperbolic boundary of the free group. Ozawa observes that his result extends to hyperbolic groups. In this section we prove a generalization of this result which imposes slightly weaker requirements. The main advantage of our approach is that it extends to groups that are not necessarily hyperbolic or even relatively hyperbolic, including certain mapping class groups.

\begin{defn} \label{defn:stationary-measure}
Let $G$ be a discrete group, let $X$ be a compact $G$-space, and let $\mu \in \P(G)$ be a probability measure. A probability measure $\nu \in \P(X)$ is \emph{$\mu$-stationary} if $\mu \ast \nu = \nu$, where
\[
\mu \ast \nu = \sum_{s \in G} \mu(s) \, s \nu.
\]
\end{defn}

\begin{defn} \label{defn:harmonic-functions}
Let $G$ be a discrete group, and let $\mu \in \P(G)$ be a probability measure. A function $f \in \ell^\infty(G)$ is said to be $\mu$-harmonic if
\[
f(s) = \sum_{t \in G} f(st) \, \mu(t), \quad \forall s \in G.
\]
\end{defn}

For a probability measure $\mu \in \P(G)$, the space $H^\infty(G,\mu)$ of $\mu$-harmonic functions in $\ell^\infty(G)$ is 
a weak*-closed operator subsystem of $\ell^\infty(G)$, and there is a unital positive idempotent $G$-equivariant map from $\ell^\infty(G)$ onto $H^\infty(G,\mu)$. Equipped with the corresponding Choi-Effros product, $H^\infty(G,\mu)$ is a commutative von Neumann algebra (although not, in general, a subalgebra of $\ell^\infty(G)$). In particular, there is a compact $G$-space $\Pi_\mu$, called the \emph{topological Poisson boundary} of the pair $(G, \mu)$, such that $H^\infty(G,\mu) = C(\Pi_\mu)$.

Let $e$ denote the identity element of $G$, and let $\delta_e \in \P(G)$ denote the corresponding point mass. Then  $H^\infty(G,\mu) \cong L^\infty(\Pi_\mu, \mu_\infty)$, where $\mu_\infty$ is the probability measure on $\Pi_\mu$ obtained by restricting the state $\delta_e$ on $\ell^\infty(G)$ to $H^\infty(G,\mu)$. The probability space $(\Pi_\mu, \mu_\infty)$ is called the \emph{Poisson boundary} of the pair $(G, \mu)$.

\begin{thm}\label{prop3}
Let $G$ be a non-amenable hyperbolic group, and let $\mu \in \P(G)$ be an irreducible probability measure. Let $\partial G$ denote the hyperbolic boundary of $G$, and let $\nu \in \P(\partial G)$ be a $\mu$-stationary probability measure. If $\phi : C(\partial G) \to L^\infty(\partial G, \nu)$ is a unital positive $G$-equivariant map, then $\phi = \operatorname{id}$.
\end{thm}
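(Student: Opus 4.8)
The plan is to show that $\phi$ coincides with the canonical inclusion $\iota : C(\partial G) \to L^\infty(\partial G, \nu)$, i.e.\ that $\phi(f) = f$ in $L^\infty(\partial G, \nu)$ for every $f \in C(\partial G)$. The argument splits into the identification of a pushforward measure via uniqueness of the stationary measure, followed by an almost-everywhere vanishing argument via the strongly approximately transitive (SAT) property.

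First I would consider the pushforward $\kappa = \phi^*(\nu) \in \P(\partial G)$, defined by $\int_{\partial G} f \, d\kappa = \int_{\partial G} \phi(f) \, d\nu$ for $f \in C(\partial G)$; this is a probability measure because $\phi$ is unital and positive (and completely positivity is automatic as the target is commutative). A direct computation using the $G$-equivariance of $\phi$ and the stationarity of $\nu$ shows that $\kappa$ is itself $\mu$-stationary: expanding $\int f \, d(\mu \ast \kappa) = \sum_s \mu(s) \int (s^{-1}f) \, d\kappa$ and moving the translations through $\phi$ converts it into $\int \phi(f) \, d(\mu \ast \nu) = \int \phi(f) \, d\nu = \int f \, d\kappa$. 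Since $\mu$ is irreducible and $G$ is hyperbolic, the uniqueness results of Kaimanovich and Kaimanovich--Masur for $\mu$-stationary measures on $\partial G$ force $\kappa = \nu$, whence $\int \phi(f) \, d\nu = \int f \, d\nu$ for every $f \in C(\partial G)$.

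Next I would upgrade this from the base point to every group element. Using $G$-equivariance together with the identity $\int F \, d(s\nu) = \int (s^{-1}F)\, d\nu$ (where $(s^{-1}F)(x) = F(sx)$), one obtains $\int \phi(f) \, d(s\nu) = \int \phi(s^{-1}f)\, d\nu = \int (s^{-1}f)\, d\nu = \int f\, d(s\nu)$ for every $s \in G$, where the middle equality applies the previous step to $s^{-1} f \in C(\partial G)$. Setting $F = \phi(f) - f \in L^\infty(\partial G, \nu)$, this says precisely that $\int F \, d(s\nu) = 0$ for all $s \in G$.

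Finally I would invoke Jaworski's theory of SAT measures to conclude $F = 0$. Since $(\partial G, \nu)$ is the Poisson boundary (Kaimanovich) and is $\mu$-SAT, along almost every sample path of the $\mu$-random walk $\omega_n \to \omega_\infty$ one has $\int F \, d(\omega_n \nu) \to F(\omega_\infty)$; the essential strengthening over mere weak* convergence of $\omega_n \nu$ to a point mass is that this holds for the merely measurable function $F$, not only for continuous ones. Since each term $\int F \, d(\omega_n\nu)$ vanishes and the hitting distribution of $\omega_\infty$ is $\nu$, we conclude $F = 0$ $\nu$-almost everywhere, i.e.\ $\phi(f) = f$. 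I expect the main obstacle to be this last step: it rests on the SAT property of $(\partial G, \nu)$ together with almost sure boundary convergence of the random walk with hitting measure $\nu$, and it is here that the hypotheses (non-amenability, hyperbolicity, and irreducibility of $\mu$) enter through the theorems of Jaworski, Kaimanovich, and Kaimanovich--Masur, whose applicability must be verified carefully, especially in the non-hyperbolic generalizations the authors envisage.
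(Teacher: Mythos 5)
Your argument is correct and follows essentially the same route as the paper: identify $\phi^*(\nu)=\nu$ via uniqueness of the $\mu$-stationary measure, deduce $\int (\phi(f)-f)\,d(s\nu)=0$ for all $s\in G$, and conclude from the strongly approximately transitive property of $\nu$ (the paper uses Jaworski's formulation that the span of $G\nu$ is dense in $L^1(\partial G,\nu)$, where you use the equivalent almost-sure convergence of $\int F\,d(\omega_n\nu)$ along random-walk paths for bounded \emph{measurable} $F$). The only caveat is that you do not need, and should not claim, that $(\partial G,\nu)$ is the full Poisson boundary (that identification requires moment hypotheses on $\mu$); it suffices that it is a $\mu$-boundary, which holds for every irreducible $\mu$ and is exactly what the paper uses, via Furstenberg's quotient theorem, to show the Poisson map $P_\nu$ is isometric and hence that $\nu$ is SAT by Jaworski's Corollary~2.4.
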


\begin{proof}
By the results of Kaimanovich in \cite{Kai00}, the $\mu$-stationary probability measure $\nu$ is unique, and $(\partial G, \nu)$
is a $\mu$-boundary in the sense of Furstenberg \cite{Furs73}. By \cite{Furs73}*{Theorem 12.2}, $(\partial G, \nu)$ is a quotient of the Poisson boundary $(\Pi_\mu,\mu_\infty)$. Therefore, there is an embedding of $L^\infty(\partial G, \nu)$ into $L^\infty(\Pi_\mu,\mu_\infty)$.

A straightforward computation shows that the Poisson map $P_\nu : L^\infty(\partial G, \nu) \to \ell^\infty(G)$ defined as in (\ref{eqn:poisson-map}) is, in fact, the composition of the above embedding with the Poisson map $P_{\mu_\infty} : L^\infty(\Pi_\mu,\mu_\infty) \to H^\infty(G,\mu) \subset \ell^\infty(G)$. In particular, $P_\nu$ is isometric. Therefore, by a result of Jaworski \cite{Jaworski}*{Corollary 2.4}, the measure $\nu$ is strongly approximately transitive, which means that the convex hull of the $G$-orbit $G\nu$ is dense in $\P(\partial G, \nu)$ with respect to the total variation norm, where $\P(\partial G, \nu)$ denotes the space of probability measures on $\partial G$ that are absolutely continuous with respect to $\nu$.

Now, it is easy to check that the adjoint map $\phi^* : L^\infty(\partial G, \nu)^* \to \M(\partial G)$ maps $\nu$ to a $\mu$-stationary measure. Since $\nu$ is the unique $\mu$-stationary measure on $\partial G$, it follows that $\phi^*(\nu) = \nu$. Thus for $f \in C(\partial G)$,
\[
\langle\, s \nu, \phi(f)\, \rangle \,=\, \langle \,s \phi^*(\nu), f\, \rangle\, =\, \langle\, s \nu, f\, \rangle, \quad \forall s \in G.
\]
Since $\nu$ is strongly approximately transitive, the $G$-orbit $G\nu$ spans a norm dense subspace in $L^1(\partial G, \nu)$. Hence $\phi(f) = f$.
\end{proof}

\begin{rem}
We note that if the probability measure $\mu$ in the statement of \ref{prop3} is, in addition, symmetric, then by Kaimanovich \cite{Kai03}*{Theorem 3}, the stationary measure $\nu$ is doubly ergodic and Ozawa's argument from \cite{Oza07}*{Proposition 3} applies.
\end{rem}

We can now identify the $G$-injective envelope of the algebra $C(\partial G)$.

\begin{cor} \label{cor:identification-hamana-boundary}
Let $G$ be a hyperbolic group, and let $\partial G$ denote the hyperbolic boundary of $G$. Then $I_G(C(\partial G)) = C(\fb)$, where $I_G(C(\partial G))$ denotes the $G$-injective envelope of $C(\partial G)$, and $\fb$ denotes the Furstenberg boundary of $G$.
\end{cor}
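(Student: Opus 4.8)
The plan is to exhibit $C(\fb)$ itself as a $G$-injective envelope of $C(\partial G)$ and then appeal to the uniqueness of the $G$-injective envelope from Theorem \ref{thm:existence-G-inj-envelope}. Since a $G$-injective envelope is exactly a $G$-extension that is simultaneously $G$-injective and $G$-essential, it suffices to produce a completely isometric $G$-equivariant embedding $C(\partial G) \hookrightarrow C(\fb)$ and to check these two properties. I expect the argument to be entirely formal, resting on the rigidity already collected in Theorem \ref{thm:rigidity}; the sole genuine geometric input is the classical fact that the action of a non-amenable (equivalently, non-elementary) hyperbolic group on its hyperbolic boundary $\partial G$ is minimal and strongly proximal, so that $\partial G$ is a $G$-boundary. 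For elementary hyperbolic groups $\partial G$ fails to be a boundary, so the statement should be read with $G$ non-amenable.

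First I would record that $\partial G$ is a $G$-boundary and apply part (3) of Theorem \ref{thm:rigidity} with $B = \partial G$: there is a unique unital $G$-equivariant map $\iota : C(\partial G) \to C(\fb)$, and it is a unital injective $*$-homomorphism. In particular $\iota$ is a completely isometric $G$-equivariant embedding, so $(C(\fb), \iota)$ is a $G$-extension of $C(\partial G)$, and I may identify $C(\partial G)$ with its image $\iota(C(\partial G)) \subseteq C(\fb)$.

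Next I would verify the two defining properties. The $G$-injectivity of $C(\fb)$ is precisely the content of Theorem \ref{thm:rigidity}. For $G$-essentiality, the key point is that part (1) of Theorem \ref{thm:rigidity} is in fact stronger than required: every unital positive (hence, $C(\fb)$ being commutative, completely positive) $G$-equivariant map out of $C(\fb)$ into any $G$-operator system is automatically completely isometric. Consequently, any unital completely positive $G$-equivariant $\phi : C(\fb) \to \T$ whose restriction to $C(\partial G)$ is completely isometric is, a fortiori, completely isometric on all of $C(\fb)$; this is exactly $G$-essentiality of the extension $C(\partial G) \subseteq C(\fb)$.

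Having shown that $(C(\fb), \iota)$ is both a $G$-injective and a $G$-essential extension of $C(\partial G)$, I would conclude from the uniqueness in Theorem \ref{thm:existence-G-inj-envelope} that $I_G(C(\partial G)) = C(\fb)$. The main (and essentially only) obstacle is establishing the boundary property of $\partial G$, which I would cite from the theory of convergence group actions of hyperbolic groups. Once that is in hand, no computation with stationary measures is needed; in particular the delicate Theorem \ref{prop3} is not required for this corollary, although it illuminates the finer structure of the associated Poisson maps.
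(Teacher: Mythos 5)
Your proof is correct, but it takes a genuinely different route from the paper's, and the difference is worth spelling out. You take as input the classical dynamical fact that the Gromov boundary of a non-elementary hyperbolic group is a $G$-boundary, feed it into part (3) of Theorem \ref{thm:rigidity} to obtain the unique $G$-equivariant embedding $C(\partial G)\hookrightarrow C(\fb)$, and then note that the $G$-injectivity of $C(\fb)$ together with the blanket statement in part (1) of Theorem \ref{thm:rigidity} (every unital positive $G$-equivariant map out of $C(\fb)$ is completely isometric) makes $C(\fb)$ a $G$-injective, $G$-essential extension of $C(\partial G)$; uniqueness of the $G$-injective envelope finishes the argument. This is formally valid, and it actually proves the cleaner general statement that $I_G(C(B)) = C(\fb)$ for \emph{every} $G$-boundary $B$. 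The paper, by contrast, deliberately does not assume that $\partial G$ is a $G$-boundary: it produces the inclusion $\iota_0 : C(\partial G)\hookrightarrow C(\fb)$ by a different mechanism --- Adams' amenability of the boundary action makes $L^\infty(\partial G,\nu)\rtimes G$ injective, Theorem \ref{prop3} then allows Ozawa's argument to give $\rcp{C(\partial G)}{G}\subset I(\ca_r(G))$, and Hamana's Theorem 3.4 converts this into the embedding --- and only then closes the loop with maps in both directions and $G$-rigidity (which, incidentally, is also the cleanest way to make your appeal to ``uniqueness'' in Theorem \ref{thm:existence-G-inj-envelope} airtight, since that statement as written only provides a completely isometric map one way). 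What the longer route buys is Remark \ref{rem:hyperbolic-boundary-is-g-boundary}: minimality and strong proximality of $\partial G$ come out as consequences rather than going in as hypotheses, and the same scheme transfers to the Thurston boundary in Theorem \ref{thm:thurston-boundary}, where the needed input is uniqueness of the stationary measure rather than an a priori boundary property. Two cautions about your version: the source you cite for strong proximality must be independent of this corollary to avoid circularity (it is --- north--south dynamics of loxodromic elements in a convergence action --- but you should say so explicitly), and you are right that non-amenability must be read into the hypotheses, exactly as in Theorem \ref{prop3}, since for virtually cyclic $G$ the conclusion is false.
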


\begin{proof}
The von Neumann algebra crossed product $L^\infty(\partial G, \nu) \rtimes G$ is injective, since the action of a hyperbolic group on its hyperbolic boundary is amenable by a result of Adams \cite{Ada94}. Therefore, applying Theorem \ref{prop3}, we can argue as in \cite{Oza07} that
\[
\rcp{C(\partial G)}{G} \subset I(\ca_r(G)) = I(\rcp{\bC}{G}).
\]
It follows from \cite{Ham85}*{Theorem 3.4} there is unital positive $G$-equivariant inclusion $\iota_0: C(\partial G) \hookrightarrow C(\fb)$, and by the $G$-injectivity of $C(\fb)$, we can extend $\iota_0$ to a unital positive $G$-equivariant map $\iota : I_G(C(\partial G)) \to C(\fb)$. 

On the other hand, since $\bC \subset I_G(C(\partial G))$, and since the latter space is $G$-injective, there is an isometric $G$-equivariant isometric inclusion $\eta : C(\fb) \hookrightarrow I_G(C(\partial G))$.

By the $G$-rigidity of the spaces $C(\fb)$ and $I_G(C(\partial G))$, the maps $\iota \eta$ and $\eta \iota$ must be the identity maps.
\end{proof}

\begin{rem} \label{rem:hyperbolic-boundary-is-g-boundary}
Corollary \ref{cor:identification-hamana-boundary} implies that there is a C*-algebra inclusion $C(\partial G) \subset C(\fb)$, and hence that the hyperbolic boundary $\partial G$ is a quotient of the Furstenberg boundary $\fb$. In particular, this implies that $\partial G$ is a $G$-boundary, i.e. the $G$-action on $\partial G$ is minimal and strongly proximal. This fact seems to be well known.
\end{rem}

In fact, our proof of Theorem \ref{prop3} implies that when $X$ is a compact $G$-space with a unique $\mu$-stationary probability measure $\nu \in \P(X)$ such that $\nu$ has full support and $(X, \nu)$ is a $\mu$-boundary, then the only unital positive $G$-equivariant map from $C(X)$ to $L^\infty(X,\nu)$ is the identity map. Note that Corollary \ref{cor:identification-hamana-boundary} also holds in this setting, provided that the action of $G$ on $(X,\nu)$ is amenable in the sense of Zimmer \cite{Zim78}, which is equivalent to the von Neumann algebra crossed product $L^\infty(X,\nu) \rtimes G$ being injective.

In particular, using the work of Kaimanovich and Masur \cite{Kai-Mas96} on mapping class groups and arguing as above, we can prove the following result.

\begin{thm} \label{thm:thurston-boundary}
Let $S$ be a closed orientable surface of genus $g\geq 2$. Let $G$ be the mapping class group of $S$, and let $\mu \in \P(G)$ be an irreducible probability measure. Let $\partial_T G$ denote the Thurston boundary of $G$, and let $\nu\in \P(\partial_T G)$ be a $\mu$-stationary probability measure on $\partial_T G$. Then we have the following results:
\begin{enumerate}
\item If $\phi : C(\partial_T G) \to L^\infty(\partial_T G, \nu)$ is a unital positive $G$-equivariant map, then  $\phi = \operatorname{id}$.
\item The $G$-injective envelope of $C(\partial_T G)$ is $C(\fb)$.
\end{enumerate}
\end{thm}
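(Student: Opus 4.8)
The plan is to apply the general framework sketched immediately before the statement: once we know that $(\partial_T G, \nu)$ is, up to null sets, a $\mu$-boundary on which $\nu$ is the unique $\mu$-stationary measure and has full support, the proof of Theorem \ref{prop3} transcribes verbatim to give part (1), and the argument of Corollary \ref{cor:identification-hamana-boundary} gives part (2) provided the $G$-action on $(\partial_T G, \nu)$ is amenable in the sense of Zimmer. So the whole task reduces to supplying these ergodic-theoretic inputs for the mapping class group acting on its Thurston boundary $\partial_T G$ (the space of projective measured foliations), and the correct source is the work of Kaimanovich and Masur \cite{Kai-Mas96} in place of Kaimanovich's \cite{Kai00} for hyperbolic groups.

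For part (1), I would first invoke \cite{Kai-Mas96} to obtain, for an irreducible $\mu$, a unique $\mu$-stationary probability measure $\nu$ on $\partial_T G$ with full topological support, together with the fact that $(\partial_T G, \nu)$ is a $\mu$-boundary. By \cite{Furs73}*{Theorem 12.2} this $\mu$-boundary is a quotient of the Poisson boundary $(\Pi_\mu, \mu_\infty)$, which embeds $L^\infty(\partial_T G, \nu)$ into $L^\infty(\Pi_\mu, \mu_\infty)$ and realizes the Poisson map $P_\nu$ as the composition of that embedding with $P_{\mu_\infty}$; in particular $P_\nu$ is isometric, so Jaworski's \cite{Jaworski}*{Corollary 2.4} yields that $\nu$ is strongly approximately transitive. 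Then, given a unital positive $G$-equivariant map $\phi : C(\partial_T G) \to L^\infty(\partial_T G, \nu)$, the adjoint $\phi^*$ carries $\nu$ to a $\mu$-stationary measure, whence $\phi^*(\nu) = \nu$ by uniqueness; this gives $\langle s\nu, \phi(f) \rangle = \langle s\nu, f \rangle$ for all $s \in G$ and all $f \in C(\partial_T G)$. Since the orbit $G\nu$ spans a norm-dense subspace of $L^1(\partial_T G, \nu)$ by strong approximate transitivity, this forces $\phi(f) = f$, the equality being read in $L^\infty(\partial_T G, \nu)$ through the full-support inclusion $C(\partial_T G) \hookrightarrow L^\infty(\partial_T G, \nu)$.

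For part (2), I would repeat the argument of Corollary \ref{cor:identification-hamana-boundary}. The one new ingredient is that the von Neumann algebra crossed product $L^\infty(\partial_T G, \nu) \rtimes G$ is injective, which holds because the $G$-action on $(\partial_T G, \nu)$ is amenable in the sense of Zimmer: being a measurable quotient of the Poisson boundary, whose action is always Zimmer-amenable, and amenability passes to quotients. Granted injectivity, Ozawa's argument \cite{Oza07} together with part (1) yields $\rcp{C(\partial_T G)}{G} \subset I(\rcp{\bC}{G}) = I(\ca_r(G))$, and \cite{Ham85}*{Theorem 3.4} produces a unital positive $G$-equivariant inclusion $C(\partial_T G) \hookrightarrow C(\fb)$, which extends by $G$-injectivity of $C(\fb)$ to a map $\iota : I_G(C(\partial_T G)) \to C(\fb)$. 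Conversely, $\bC \subset I_G(C(\partial_T G))$ and the $G$-injectivity of the latter give an isometric $G$-equivariant map $\eta : C(\fb) \hookrightarrow I_G(C(\partial_T G))$; the $G$-rigidity of both spaces forces $\iota\eta$ and $\eta\iota$ to be the identity, so $I_G(C(\partial_T G)) = C(\fb)$.

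I expect the only genuine content to lie in citing Kaimanovich-Masur precisely: the uniqueness and full support of the stationary measure on $\partial_T G$, and its identification as a $\mu$-boundary. The delicate point is that the harmonic measure is carried by the uniquely ergodic locus, so \emph{full support} must be read as full \emph{topological} support rather than equivalence with a reference measure class; I would confirm from \cite{Kai-Mas96} that this support is all of $\partial_T G$, since that is exactly what makes the inclusion $C(\partial_T G) \hookrightarrow L^\infty(\partial_T G, \nu)$ isometric and hence what makes the conclusion $\phi = \operatorname{id}$ meaningful. Once these facts are secured, both parts are formal transcriptions of the hyperbolic case.
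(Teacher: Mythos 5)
Your part (1) is a faithful transcription of the paper's argument: the paper likewise quotes \cite{Kai-Mas96}*{Theorem 2.2.4} for the existence of a unique $\mu$-stationary measure $\nu$ on $\partial_T G$ with full support and for the fact that $(\partial_T G,\nu)$ is a $\mu$-boundary, and then runs the proof of Theorem \ref{prop3} verbatim. Your reading of ``full support'' as full topological support, needed so that $C(\partial_T G)\hookrightarrow L^\infty(\partial_T G,\nu)$ is isometric, is also the right point to be careful about.

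There is, however, a genuine gap in part (2), exactly at the step you single out as the ``one new ingredient.'' You justify the Zimmer-amenability of $G\acts(\partial_T G,\nu)$ by saying that $(\partial_T G,\nu)$ is a measurable quotient of the Poisson boundary, that the Poisson boundary action is always Zimmer-amenable, and that ``amenability passes to quotients.'' That last assertion is false. The one-point space is a measurable $G$-equivariant quotient of the Poisson boundary, and the trivial action is Zimmer-amenable only when $G$ itself is amenable; more pointedly, for $G=\bF_2\times\bF_2$ with a product measure the factor $\mu$-boundary on which the second coordinate acts trivially has non-amenable stabilizers, hence a non-amenable action. So being a $\mu$-boundary does not by itself give amenability (what is true is the opposite pull-back direction: amenability passes from a space to spaces mapping \emph{into} it, which is how Lemma \ref{state-action} is used elsewhere in the paper). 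This matters here because, unlike the hyperbolic case where Adams' theorem gives amenability of the boundary action outright, the topological action of the mapping class group on all of $\partial_T G$ is not amenable (stabilizers of non-filling foliations are too large). The paper closes this gap by using the further information from \cite{Kai-Mas96}*{Theorem 2.2.4} that $\nu$ is concentrated on the space of minimal measured foliations, and then invoking \cite{Kid08}*{Theorem 1.4}, which gives Zimmer-amenability of the $G$-action on that locus for any quasi-invariant measure; this is what yields injectivity of $L^\infty(\partial_T G,\nu)\rtimes G$ and lets the argument of Corollary \ref{cor:identification-hamana-boundary} go through. The rest of your part (2) (the maps $\iota$ and $\eta$ and the rigidity argument) matches the paper.
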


\begin{proof}
Kaimanovich and Masur proved in \cite{Kai-Mas96}*{Theorem 2.2.4} that there is a unique $\mu$-stationary measure $\nu$ on $\partial_T G$ with full support, and that $(\partial_T G, \nu)$ is a $\mu$-boundary. Hence an argument similar to the proof of Theorem \ref{prop3}  yields (1). 

It was further proved in \cite{Kai-Mas96}*{Theorem 2.2.4} that $\nu$ is concentrated on the space of minimal measured foliations, and hence by \cite{Kid08}*{Theorem 1.4}, the $G$-action on $(\partial_T G, \nu)$ is amenable in the sense of Zimmer \cite{Zim78}, which implies that the von Neumann algebra crossed product $L^\infty(\partial_T G, \nu) \rtimes G$ is injective.

The proof of (2) is similar to the proof of Corollary \ref{cor:identification-hamana-boundary}.
\end{proof}

\begin{rem}
We note that Theorem \ref{thm:thurston-boundary} similarly implies the well-known fact that the Thurston boundary is a $G$-boundary.
\end{rem}

%%%%%%%%%%%%%%%%%%%%%%%%%%%%%%%%%%%%%%%%%%
\section{C*-simplicity}\label{sec:C*-sim}
%%%%%%%%%%%%%%%%%%%%%%%%%%%%%%%%%%%%%%%%%%
A discrete group $G$ is \emph{C*-simple} if the reduced C*-algebra $\ca_r(G)$ of $G$ is simple, i.e. it has no non-trivial closed two-sided ideals. An equivalent reformulation in the language of representation theory is that every unitary representation of $G$ that is weakly contained in the left regular representation is actually weakly equivalent to the left regular representation. The problem of determining which groups are C*-simple has received a great deal of attention (see \cite{Del07} for a survey).

In this section, we will prove that $G$ is C*-simple if and only if the $G$-action on the Furstenberg boundary $\fb$ is topologically free. Thus the study of the $G$-action on $\fb$ provides a new approach to problems about C*-simplicity. As an application of these ideas, we will prove the C*-simplicity of Tarski monster groups.

\begin{defn} \label{defn:topological-freeness}
Let $G$ be a discrete group with identity element $e$, and let $X$ be a compact $G$-space. The $G$-action on $X$ is \emph{topologically free} if for every $s \in G \setminus \{e\}$, the set
\[
X \setminus X^s = \{x \in X \mid sx \ne x\}
\]
is dense in $X$.
\end{defn}

The following theorem is the main result in this section. We are grateful to Narutaka Ozawa for showing us a proof of the implication (2) $\Rightarrow$ (4) which does not require $G$ to be exact.

\begin{thm} \label{thm:c-star-simplicity}
Let $G$ be a discrete group, and let $\fb$ denote the Furstenberg boundary of $G$. Then the following are equivalent:
\begin{enumerate}
\item The group $G$ is C*-simple, i.e. the reduced C*-algebra $\ca_r(G)$ is simple.
\item The reduced crossed product $\rcp{C(\fb)}{G}$ is simple.
\item The reduced crossed product $\rcp{C(B)}{G}$ is simple for some $G$-boundary $B$.
\item The $G$-action on $\fb$ is topologically free.
\item The $G$-action on some $G$-boundary is topologically free.
\end{enumerate}
\end{thm}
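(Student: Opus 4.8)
The plan is to treat the ``$\fb$'' conditions (2),(4) and the ``some boundary'' conditions (3),(5) uniformly, via two master lemmas about an arbitrary $G$-boundary $B$, and to tie everything back to (1) through the rigidity of the injective envelope. First I would record the structural inclusions that make this possible. Since every $G$-boundary $B$ is a quotient of $\fb$, Theorem \ref{thm:rigidity} supplies a $G$-equivariant inclusion $C(B) \hookrightarrow C(\fb)$, and combining this with Hamana's identification $I(\ca_r(G)) = I(\rcp{\bC}{G})$ and \cite{Ham85}*{Theorem 3.4} yields the chain $\ca_r(G) \subseteq \rcp{C(B)}{G} \subseteq \rcp{C(\fb)}{G} \subseteq I(\ca_r(G))$.

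I would then dispatch the easy implications (1) $\Rightarrow$ (2) and (1) $\Rightarrow$ (3) by proving that $\ca_r(G)$ is \emph{essential} in any intermediate C*-algebra $\ca_r(G) \subseteq D \subseteq I(\ca_r(G))$, meaning every nonzero closed ideal $J \trianglelefteq D$ meets $\ca_r(G)$ nontrivially. Indeed, if $J \cap \ca_r(G) = 0$ then the quotient map $q : D \to D/J$ is injective, hence completely isometric, on $\ca_r(G)$; injectivity of $I(\ca_r(G))$ lets me extend $(q|_{\ca_r(G)})^{-1}$ to a ucp map $\psi : D/J \to I(\ca_r(G))$, and extending $\psi q$ to all of $I(\ca_r(G))$ produces a ucp self-map fixing $\ca_r(G)$, which by rigidity of the injective envelope must be the identity; thus $\psi q = \mathrm{id}_D$ and $J = 0$. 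Granting essentiality, simplicity of $\ca_r(G)$ forces any nonzero ideal of $\rcp{C(\fb)}{G}$ or $\rcp{C(B)}{G}$ to contain the unit. The implications (2) $\Rightarrow$ (3) and (4) $\Rightarrow$ (5) are then immediate by taking $B = \fb$.

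Next I would establish two lemmas valid for any $G$-boundary $B$. Lemma A (yielding (4) $\Rightarrow$ (1) and (5) $\Rightarrow$ (1)): if the $G$-action on $B$ is topologically free, then $\ca_r(G)$ is simple. I would prove this through a Powers-type averaging property, namely that for $a = \sum_s c_s \lambda_s \in \ca_r(G)$ with $\tau(a) = 0$ and $\ep > 0$ there exist $g_1, \dots, g_n \in G$ with $\|\tfrac1n \sum_i \lambda_{g_i} a \lambda_{g_i}^{*}\| < \ep$; since $\tfrac1n\sum_i \lambda_{g_i}(\,\cdot\,)\lambda_{g_i}^{*}$ is unital and each $\lambda_{g_i}$ is a unitary, this gives $\|\pi(a)\| \ge \tau(a)$ for every unital $*$-representation $\pi$ and every $a \ge 0$, so (using faithfulness of $\tau$) every nonzero representation is faithful. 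The boundary enters precisely here: strong proximality of the action on $B$ lets me choose the $g_i$ so that the transformed coefficient functions concentrate near distinct points of $B$, while topological freeness ensures that for $s \ne e$ the conjugates $g_i s g_i^{-1}$ genuinely spread out, forcing the off-diagonal norm to shrink. Lemma B (yielding (2) $\Rightarrow$ (4) and (3) $\Rightarrow$ (5)): if $\rcp{C(B)}{G}$ is simple, then the action on $B$ is topologically free. Together with the classical fact that a minimal topologically free action has simple reduced crossed product (giving (4) $\Rightarrow$ (2) and (5) $\Rightarrow$ (3)), these lemmas close all the required loops: (1) $\Rightarrow$ (2) $\Rightarrow$ (4) $\Rightarrow$ (1) and (1) $\Rightarrow$ (3) $\Rightarrow$ (5) $\Rightarrow$ (1).

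The main obstacle is Lemma B, the passage from simplicity of the crossed product to topological freeness. I would argue by contraposition: assuming some $s \ne e$ fixes a nonempty open set $U \subseteq B$ pointwise, I would use minimality to spread $U$ across $B$ and strong proximality to control the resulting representation, aiming to exhibit a nonzero proper ideal of $\rcp{C(B)}{G}$. The delicate point---and the reason this direction is credited to Ozawa---is to do this without invoking amenability of the $G$-action on $B$, which would covertly restrict to exact $G$; the failure of freeness must be detected purely from the reduced crossed product and the boundary rigidity of Theorem \ref{thm:rigidity}, rather than from any nuclearity of $\rcp{C(B)}{G}$.
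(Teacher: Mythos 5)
Your Lemma B is false as stated, and this breaks the loop $(3) \Rightarrow (5) \Rightarrow (1)$. A single point is always a $G$-boundary (trivially minimal and strongly proximal), and for $B = \{pt\}$ the reduced crossed product $\rcp{C(B)}{G}$ is just $\ca_r(G)$; so for any C*-simple group, $\rcp{C(B)}{G}$ is simple while the action on $B$ is not topologically free ($X \setminus X^s = \emptyset$ for every $s$). The passage from simplicity of the crossed product to topological freeness is therefore special to $\fb$, and the paper proves only $(2) \Rightarrow (4)$, reaching $(3)$ and $(5)$ by routing everything through $\fb$ (namely $(3) \Rightarrow (2)$ via the inclusion $\rcp{C(B)}{G} \subset \rcp{C(\fb)}{G}$ and Hamana's Corollary 3.5, and $(4) \Rightarrow (5)$ trivially). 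Moreover, even for $B = \fb$ your sketch of Lemma B omits the mechanism that makes the contrapositive work: one must show that the stabilizer $G_x$ of a point $x$ in the interior of the fixed-point set $(\fb)^s$ is \emph{amenable} --- which uses the $G$-equivariant projection $\ell^\infty(G) \to C(\fb)$, i.e., exactly the $G$-injectivity of $C(\fb)$ --- so that the quasi-regular representation on $\ell^2(G/G_x)$ is weakly contained in $\lambda$, Fell's absorption principle yields a representation of the \emph{reduced} crossed product, and $f(1-\lambda_s)$ lies in its kernel for $f$ supported in $(\fb)^s$. ``Minimality to spread $U$ and strong proximality to control the representation'' does not substitute for this, and for a general boundary the stabilizers need not be amenable at all.

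Your Lemma A is also doing far more work than the sketch supports. That topological freeness of a boundary action yields Powers-type averaging (hence C*-simplicity) is a true but deep statement --- essentially the later Haagerup--Kennedy characterization --- and the sentence about conjugates ``genuinely spreading out'' is not a proof; the actual estimates require working inside $\rcp{C(B)}{G}$ with the conditional expectation and a compactness argument over the support of $a$. The paper avoids this entirely: it closes the loop with a direct proof of $(2) \Rightarrow (1)$, extending the quotient map $\ca_r(G) \to \ca_r(G)/\J$ to a $G$-equivariant u.c.p.\ map on $\rcp{C(\fb)}{G}$ and then correcting it to a genuine *-homomorphism by cutting with the support projection of $C(\fb)^{**}$ inside $I_G(\ca_r(G)/\J)^{**}$. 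Your $(1) \Rightarrow (2),(3)$ essentiality argument and the appeals to Archbold--Spielberg for $(4) \Rightarrow (2)$ and $(5) \Rightarrow (3)$ are fine and match the paper, but as written the two master lemmas --- the only routes back to $(1)$ and the only routes from $(2),(3)$ to $(4),(5)$ --- are respectively unproved and false in the stated generality.
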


\begin{proof}
(1) $\Rightarrow$ (2) If $\ca_r(G) = \rcp{\bC}{G}$ is simple, then $\rcp{C(\fb)}{G}$ is also simple by \cite{Ham85}*{Corollary 3.5}.

%%%%%

(2) $\Rightarrow$ (1) Suppose that $\rcp{C(\fb)}{G}$ is simple. Let $\J$ be an ideal of $\ca_r(G)$ and let $\pi : \ca_r(G) \to \ca_r(G)/{\J}$ denote the corresponding quotient map. Note that $G$ acts on $\ca_r(G)$ and $\ca_r(G)/{\J}$ by conjugation, i.e. $s \to \operatorname{Ad}(\lambda_s)$ and $s \to \operatorname{Ad}(\pi(\lambda_s))$ respectively for $s \in G$, where  $\lambda$ denotes the left regular representation of $G$.

Observe that $\pi$ is $G$-equivariant with respect to these actions. By $G$-injectivity, we can extend $\pi$ to a $G$-equivariant map $\tilde{\pi} : \rcp{C(\fb)}{G} \to I_G(\ca_r(G)/{\J})$. By rigidity, the $G$-action on  $I_G(\ca_r(G)/{\J})$ is also implemented by conjugation, i.e. $s \to \operatorname{Ad}(\pi(\lambda_s))$ for $s \in G$.

We would be done if $\tilde{\pi}$ was a *-homomorphism. Unfortunately, while the image of $C(\fb)$ under $\tilde{\pi}$ is a $G$-equivariant embedding of $C(\fb)$ into $I_G(\ca_r(G)/{\J})$, it may not be a *-algebra embedding. We will find a *-algebra embedding of $C(\fb)$ into the second dual $I_G(\ca_r(G)/\J)^{**}$, and use this embedding to construct a suitable *-homomorphism of $\rcp{C(\fb)}{G}$.

Applying $G$-injectivity again, there is a $G$-equivariant projection $\phi : I_G(\ca_r(G)/{\J}) \to \tilde{\pi}(C(\fb))$. Endowed with the corresponding Choi-Effros product, $\tilde{\pi}(C(\fb))$ is a C*-algebra isomorphic to $C(\fb)$.

The restriction of $\phi$ to the C*-algebra $\ca(\tilde{\pi}(C(\fb)))$ is easily seen to be a $G$-equivariant surjective *-homomorphism with respect to the Choi-Effros product on $\tilde{\pi}(C(\fb))$. Let $\rho : \ca(\tilde{\pi}(C(\fb))) \to C(\fb)$ denote the composition of this restriction with the *-isomorphism onto $C(\fb)$.

Passing to the bidual, the $G$-action on $I_G(\ca_r(G)/\J)$ canonically extends to a $G$-action on $I_G(\ca_r(G)/\J)^{**}$, i.e.  $s \to \operatorname{Ad}(\pi(\lambda_s))$ for $s \in G$. We have the inclusion $\ca(\tilde{\pi}(C(\fb)))^{**} \subset I_G(\ca_r(G)/\J)^{**}$, and $\ca(\tilde{\pi}(C(\fb)))^{**}$ is $G$-invariant.

Consider the normal $G$-equivariant surjective *-homomorphism
\[
\rho^{**} : \ca(\tilde{\pi}(C(\fb)))^{**} \to C(\fb)^{**}.
\]
Let $q \in \ca(\tilde{\pi}(C(\fb)))^{**} \subset I_G(\ca_r(G)/\J)^{**}$ denote the corresponding support projection. Then $\ca(\tilde{\pi}(C(\fb)))^{**}q \simeq C(\fb)^{**}$.

Define $\sigma : \rcp{C(\fb)}{G} \to I_G(\ca_r(G)/\J)^{**}$ by
\[
\sigma(x) = \tilde{\pi}(x)q, \quad x \in \rcp{C(\fb)}{G}.
\]
We claim that $\sigma$ is a *-homomorphism. To see this, first observe that since the kernel of $\rho^{**}$ is $G$-invariant, $q$ commutes with $\pi(\ca_r(G))$. Hence the restriction of $\sigma$ to $\ca_r(G)$ is a *-homomorphism. Next, observe that the restriction of $\sigma$ to $C(\fb)$ implements the *-isomorphism between $\ca(\tilde{\pi}(C(\fb)))^{**}q$ and $C(\fb)$. Since $\rcp{C(\fb)}{G}$ is generated by $C(\fb)$ and $\rcp{C(\fb)}{G}$, it follows that $\sigma$ is a *-homomorphism. 

Since $\rcp{C(\fb)}{G}$ is simple, $\ker \sigma = \{0\}$ is trivial. Hence $\J = \ker \pi = \{0\}$.

%%%%%

(2) $\Rightarrow$ (4) Suppose that the $G$-action on $\fb$ is not topologically free. We will prove that $\rcp{C(\fb)}{G}$ is not simple. The proof is modeled on the proofs of Archbold-Spielberg \cite{ArcSpi94}*{Theorem 2} and Kawamura-Tomiyama \cite{KT1990}*{Theorem 4.1}.

Fix $s \in G \setminus {e}$ such that $(\fb)^s = \{ x \in \fb \mid sx = x\}$ has non-empty interior, and $x \in (\fb)^s$. We first claim that the stabilizer subgroup $G_x = \{t \in G \mid tx = x \}$ is amenable.

Let $\phi : \ell^\infty(G) \to C(\fb)$ be a $G$-equivariant projection onto $C(\fb)$, and let $\delta_x : C(\fb) \to \bC$ denote the point mass on $C(\fb)$ corresponding to $x$. Then $\delta_x \circ \phi$ is a $G_x$ invariant state on $\ell^\infty(G)$.

Let $(s_\alpha)_{\alpha \in A}$ be a system of representatives for the right cosets $G_x \backslash G$. Define a map $\rho : \ell^\infty(G_x) \to \ell^\infty(G)$ by $\rho(f)(t) = f(r)$, where $t = rs_\alpha$ for $r \in G_x$. Then $\rho$ is a unital $G_x$-equivariant injective *-homomorphism, and the composition $\delta_x \circ \phi \circ \rho$ is a $G_x$-invariant projection on $\ell^\infty(G_x)$, which establishes the claim.

Since $G_x$ is amenable, the induced representation $\lambda'$ on $\ell^2(G/G_x)$ corresponding to the subgroup $G_x$ is weakly contained in the left regular representation of $G$. Define $\pi_x : C(\fb) \to \ell^\infty(G/G_x)$ by $\pi_x(f)(tG_x) = f(tx)$.  Then $\pi_x$ is a *-homomorphism and $(\lambda', \pi_x, \ell^2(G/G_x))$ is a covariant representation. We claim it gives a continuous representation of $\rcp{C(\fb)}{G}$.

By Fell's absorption principle (cf. \cite{BroOza08}*{Proposition 4.1.7}),
\[
\ca((\lambda' \otimes \lambda)(G),\pi_x(C(\fb)) \otimes 1_{\ell^2(G)}) \simeq \rcp{C(\fb)}{G}.
\]
Hence by the weak containment of $\lambda'$ in $\lambda$, the map sending $\lambda_t \to \lambda'_t \otimes \lambda'_t$ for $t \in G$ and $f \to \pi_x(f) \otimes 1_{\ell^2(G/G_x)}$ for $f \in C(\fb)$ is continuous on $\rcp{C(\fb)}{G}$. The diagonal subspace $\ell^2(G/G_x) \otimes \ell^2(G/G_x)$ reduces the image of this map. Compressing to this subspace and identifying it with $\ell^2(G/G_x)$ establishes the claim.

Now let $f \in C(\fb)$ be a function supported in $(\fb)^s$. Letting $\lambda' \times \pi_x$ denote the representation of $\rcp{C(\fb)}{G}$ induced by $(\lambda', \pi_x, \ell^2(G/G_x))$, it is easy to check that $(\lambda' \times \pi_x)(f(1-\lambda_s)) = 0$. Hence $\rcp{C(\fb)}{G}$ is not simple.

%%%%%

(4) $\Rightarrow$ (2) If the $G$-action on $\fb$ is topologically free, then it follows from a result of Archbold and Spielberg \cite{ArcSpi94}*{Corollary 1}, that $\rcp{C(\fb)}{G}$ is simple.

%%%%%

(5) $\Rightarrow$ (3) This implication also follows from \cite{ArcSpi94}*{Corollary 1}.

%%%%%

(3) $\Rightarrow$ (2) Let $X$ be a $G$-boundary with the property that $\rcp{C(X)}{G}$ is simple. By universality, $X$ is a continuous $G$-equivariant image of $\fb$. Hence there is a $G$-equivariant embedding of $C(X)$ as a subalgebra of $C(\fb)$. Since $\rcp{C(X)}{G} \subset \rcp{C(\fb)}{G}$, it follows from \cite{Ham85}*{Corollary 3.5} that $ \rcp{C(\fb)}{G}$ is simple. 

%%%%%

(4) $\Rightarrow$ (5) This implication is trivial.

\end{proof}

\begin{rem}
The following fact, extracted from the proof of the implication (2) $\Rightarrow$ (1) in Theorem \ref{thm:c-star-simplicity}, seems to be of independent interest: For a $G$-C*-algebra $\A$, if there is a $G$-equivariant embedding (i.e. a $G$-equivariant completely isometric map) of $\A$ into a $G$-C*-algebra $\B$, then there is a a $G$-equivariant *-algebraic embedding (i.e. a $G$-equivariant injective *-homomorphism) of $\A$ into the bidual $\B^{**}$.
\end{rem}

We recall from Section \ref{sec:size-univ-boundary} that the amenable radical $R_a(G)$ of $G$ is the largest normal amenable subgroup of $G$ \cite{Day57}*{Lemma 4.1}. It is known that if $G$ is C*-simple, then $R_a(G)$ is trivial. It is a longstanding open problem to determine whether the converse of this result is true, i.e. whether the triviality of $R_a(G)$ implies the C*-simplicity of $G$.

The following result follows immediately from Proposition \ref{hb-same-for-amenable-radical} (see also \cite{Furm}*{Corollary 8}).

\begin{prop}\label{top-free-implies-triv-amen-rad}
Let $G$ be a discrete group, and let $\fb$ denote the Furstenberg boundary of $G$. If the $G$-action on $\fb$ is topologically free, then the amenable radical $R_a(G)$ of $G$ is trivial.
\end{prop}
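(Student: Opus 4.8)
The plan is to argue by contraposition, reducing everything to the single fact that the amenable radical acts trivially on the Furstenberg boundary. Concretely, I will show that any nontrivial element of $R_a(G)$ fixes all of $\fb$, which immediately violates topological freeness.

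First I would invoke the fact, established in the proof of Proposition \ref{hb-same-for-amenable-radical} via \cite{Furm}*{Corollary 8}, that every element of $R_a(G)$ acts as the identity on $\fb$. (Alternatively, one can read this directly off Proposition \ref{hb-same-for-amenable-radical}: the homeomorphism of $G$-spaces $\fb = \partial_F G'$ with $G' = G/R_a(G)$ exhibits the $G$-action on $\fb$ as factoring through the quotient $G'$, so $R_a(G)$ lies in the kernel of the action.) Thus for every $t \in R_a(G)$ we have $tx = x$ for all $x \in \fb$.

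Next, suppose toward a contradiction that $R_a(G) \neq \{e\}$ and choose $s \in R_a(G) \setminus \{e\}$. By the previous step the fixed-point set $\fb^s = \{x \in \fb \mid sx = x\}$ is all of $\fb$, so $\fb \setminus \fb^s = \emptyset$. Since $\fb$ is a nonempty compact space (it is the spectrum of the nonzero unital algebra $C(\fb) = I_G(\bC)$), the empty set is not dense in $\fb$, so the $G$-action on $\fb$ is not topologically free in the sense of Definition \ref{defn:topological-freeness}. This contradicts the hypothesis, and therefore $R_a(G)$ is trivial.

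There is no substantive obstacle here: once the triviality of the $R_a(G)$-action on $\fb$ is granted, the conclusion is forced by the definition of topological freeness. The only point worth noting for consistency is the degenerate case: if $\fb$ is a singleton, then $G$ is amenable by Proposition \ref{prop:trivial-iff-amenable}, and for a nontrivial amenable group topological freeness already fails at every $s \neq e$, so the hypothesis is never satisfied and no tension with the statement arises.
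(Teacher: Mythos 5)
Your proof is correct and is exactly the argument the paper intends: the paper derives this proposition ``immediately'' from Proposition \ref{hb-same-for-amenable-radical} (equivalently, from the fact, via \cite{Furm}*{Corollary 8}, that $R_a(G)$ acts trivially on $\fb$), which is precisely the fact you isolate before observing that a nontrivial element acting trivially has fixed-point set all of $\fb$, contradicting topological freeness. Your handling of the degenerate singleton case is a harmless extra check and causes no tension with the paper's reasoning.
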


In \cite{Del07}, de la Harpe asked if there exists a a countable group which is both C*-simple and does not contain non-abelian free subgroups. This question was answered affirmatively in a recent preprint of Olshanskii and Osin \cite{OOpre}, where certain Burnside groups are proved to be C*-simple. Using Theorem \ref{thm:c-star-simplicity}, we can now provide another solution to de la Harpe's problem.

Recall that for a fixed prime $p$, a Tarski monster group of order $p$ is a finitely generated group $G$ with the property that every nontrivial subgroup is cyclic of order $p$. Olshanskii \cite{Olsh82} proved the existence of Tarski monster groups for every prime $p > 10^{75}$, and further proved that these groups are non-amenable. This answered a question of von Neumann about the existence of non-amenable groups which do not contain non-abelian free subgroups.

We will now prove that the action of Tarski monster groups on the Furstenberg boundary is topologically free, and hence that they are C*-simple.

\begin{thm} \label{thm:tarski-monsters-top-free}
Let $G$ be a Tarski monster group, and let $\fb$ denote the Furstenberg boundary of $G$. Then the $G$-action on $\fb$ is topologically free.
\end{thm}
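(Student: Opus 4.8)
The plan is to argue by contradiction, exploiting the extreme rigidity of the subgroup lattice of a Tarski monster together with the minimality of the $G$-action on $\fb$. Recall that a Tarski monster $G$ of order $p$ is infinite and non-amenable, and its only subgroups are the trivial group, the subgroups of order $p$, and $G$ itself. Two elementary structural facts will be needed. First, $G$ is simple: if a subgroup $\langle s\rangle$ of order $p$ were normal in $G$ and $g\notin\langle s\rangle$, then $\langle s\rangle\langle g\rangle$ would be a subgroup of order $p^2$, which does not exist. Second, and consequently, every order-$p$ subgroup is self-normalizing, $N_G(\langle s\rangle)=\langle s\rangle$: any $t$ normalizing $\langle s\rangle$ but lying outside it would generate with $s$ a subgroup of order exceeding $p$, forcing $\langle s,t\rangle=G$ and hence $\langle s\rangle$ normal in $G$, contrary to simplicity. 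In particular $[G:\langle s\rangle]=\infty$.

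Assume, for contradiction, that the $G$-action on $\fb$ is not topologically free, so that by Definition \ref{defn:topological-freeness} there is $s\in G\setminus\{e\}$ for which $U:=\operatorname{int}((\fb)^s)$ is nonempty. I first record that every stabilizer $G_x=\{t\in G\mid tx=x\}$ is a \emph{proper} subgroup: were $G_x=G$, the point $x$ would be globally fixed, so by the minimality of the action (Proposition \ref{prop:action-is-minimal}) we would have $\fb=\{x\}$, whence $C(\fb)=\bC$ and, by Proposition \ref{prop:trivial-iff-amenable}, $G$ would be amenable. Thus each $G_x$ is either trivial or of order $p$. (Alternatively one could invoke the amenability of point stabilizers extracted from the proof of Theorem \ref{thm:c-star-simplicity}.) For $x\in U$ we have $s\in G_x$, so $G_x$ is nontrivial and therefore equals $\langle s\rangle$, the unique order-$p$ subgroup containing $s$.

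The contradiction then comes from a covering argument. The set $\bigcup_{t\in G}tU$ is open, $G$-invariant and nonempty, so its complement is a closed $G$-invariant proper subset, hence empty by minimality; thus $\bigcup_{t\in G}tU=\fb$. Next I claim the translates $tU$ are constant on, and disjoint across, the cosets of $\langle s\rangle$: if $z\in tU\cap U$, then $G_z=\langle s\rangle$ because $z\in U$, while $G_z=tG_{t^{-1}z}t^{-1}=t\langle s\rangle t^{-1}$ because $t^{-1}z\in U$; hence $t\in N_G(\langle s\rangle)=\langle s\rangle$, so $t$ commutes with $s$, preserves $(\fb)^s$, and therefore $tU=U$. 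Consequently $\{tU\}$, indexed by the cosets $G/\langle s\rangle$, is a family of pairwise disjoint nonempty open sets whose union is $\fb$. Since $[G:\langle s\rangle]$ is infinite, this exhibits the compact space $\fb$ as a disjoint union of infinitely many nonempty open sets, which is impossible: a finite subcover would be forced to empty all but finitely many of them. This contradiction shows that $(\fb)^s$ has empty interior for every $s\ne e$, i.e. the action is topologically free.

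The conceptual heart — and the step I expect to require the most care — is the observation that a failure of topological freeness pins the stabilizer down to the \emph{single} finite, self-normalizing subgroup $\langle s\rangle$ across the entire open set $U$, so that the $G$-orbit of $U$ \emph{tiles} $\fb$ into infinitely many disjoint pieces rather than merely covering it. The group-theoretic inputs (simplicity and self-normalization of order-$p$ subgroups) are routine, and the relevant properties of $\fb$ (minimality and the triviality-versus-amenability dichotomy) are already available from Section \ref{sec:op-alg-construction}; the one genuinely delicate point is verifying that the translates partition $\fb$, which is precisely what the self-normalizing property supplies.
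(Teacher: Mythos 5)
Your argument is correct, and it reaches the contradiction by a genuinely different route than the paper's. Both proofs share the same group-theoretic core: the self-normalization $N_G(\langle s\rangle)=\langle s\rangle$ of the order-$p$ subgroups, used to pin down the stabilizer of every point in the interior of $(\fb)^s$ as the single subgroup $\langle s\rangle$. Where they diverge is in the topological dynamics. The paper fixes $x$ in an open set $V\subset(\fb)^{s_0}$ and uses minimality iteratively to produce, for each new point of $V$ in the orbit of $x$, a new element of the finite stabilizer $G_x$; this forces $V$ to be finite, and a finite nonempty open set yields an isolated point, contradicting Proposition \ref{prop:no-isolated-points}. You instead show that the distinct $G$-translates of $U=\operatorname{int}((\fb)^s)$ are indexed by the infinitely many cosets of $\langle s\rangle$, are pairwise disjoint, and cover $\fb$ by minimality, contradicting compactness. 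Your route buys independence from Proposition \ref{prop:no-isolated-points} (the full ``no isolated points'' statement), needing non-amenability only to rule out a global fixed point via Proposition \ref{prop:trivial-iff-amenable}; it also isolates a reusable principle, namely that a minimal compact $G$-space cannot be tiled by the orbit of a nonempty open set whose points all share a fixed self-normalizing stabilizer of infinite index. The paper's route is slightly shorter once Proposition \ref{prop:no-isolated-points} is in hand. One small point in your favour: you explicitly verify that point stabilizers are proper subgroups (so that they are cyclic of order $p$), a step the paper leaves implicit when it asserts that $s_0$ generates $G_x$. Your verifications of simplicity and self-normalization for Tarski monsters are sound.
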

\begin{proof}
Suppose for the sake of contradiction that the $G$-action on $\fb$ is not topologically free. Then there is $s_0 \in G \setminus \{e\}$ such that the set $(\fb)^{s_0} = \{ x \in \fb \mid s_0 \in G_x\ \}$ has non-empty interior, where $G_x = \{s \in G \mid sx = x\}$. Hence there is a non-empty open set $V \subset (\fb)^{s_0}$. We claim $V$ is finite. To prove this, suppose to the contrary that $V$ is infinite. We will show this leads to a contradiction.

Fix some point $x \in V$. By the minimality of the $G$-action on $\fb$, there is $s_1 \in G \setminus \{e,s_0\}$ such that $s_1x \in V \setminus \{x\}$. In particular, $s_1x \in (\fb)^{s_0}$, and hence $s_0 \in G_{s_1x}$. Note that $G_{s_1x} = s_1G_x{s_1}^{-1}$. Since $s_0$ generates $G_x$, this implies $G_x = s_1G_x{s_1}^{-1}$, and hence that $s_1$ belongs to the normalizer $N_G(G_x)$. But $N_G(G_x)$ is a proper subgroup of $G$, and it follows that we must have $N_G(G_x) = G_x$. Hence $s_1 \in G_x$.

If $V \setminus \{x, s_1x\}$ is non-empty, then we can repeat this argument to find $s_2 \in G \setminus \{e, s_0, s_1\}$ such that $s_2x \in V \setminus \{x, s_1x\}$ and $s_2 \in G_x$. If $V$ is infinite, then continuing in this way we obtain  distinct elements $s_0,s_1,\ldots,s_n \in G_x$ and distinct points $x, s_1x, \ldots , s_nx \in V$ for each $n$. But since $G_x$ is finite, this is impossible, and it follows that $V$ must be finite.

Now since $V$ is non-empty, finite and open, it follows that $\fb$ has isolated points. But this contradicts Proposition \ref{prop:no-isolated-points}.
\end{proof}

The next result follows immediately from Theorem \ref{thm:tarski-monsters-top-free} and Theorem \ref{thm:c-star-simplicity}.

\begin{cor}
Tarski monster groups are C*-simple.
\end{cor}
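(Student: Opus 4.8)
The final statement is the corollary that Tarski monster groups are C*-simple, which follows immediately from Theorem \ref{thm:tarski-monsters-top-free} (topological freeness of the action on $\fb$) and Theorem \ref{thm:c-star-simplicity} (the equivalence of topological freeness with C*-simplicity). So my plan is really to reconstruct the proof of the substantive ingredient, Theorem \ref{thm:tarski-monsters-top-free}, and then invoke the equivalence.

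The plan is to argue by contradiction: suppose the $G$-action on $\fb$ is not topologically free. Then there is a nontrivial $s_0 \in G$ whose fixed-point set $(\fb)^{s_0}$ has nonempty interior, so it contains a nonempty open set $V$. The strategy is to exploit the rigidity of Tarski monster subgroup structure — every nontrivial subgroup is cyclic of order $p$, hence finite, and in particular every stabilizer $G_x$ that contains $s_0$ is forced to equal the cyclic group $\langle s_0 \rangle$ of order $p$. First I would show $V$ must be finite. Fixing $x \in V$, I would use minimality of the action on $\fb$ (Proposition \ref{prop:action-is-minimal}) to translate other points of $V$ into $V$: any $s_1 x \in V$ again lies in $(\fb)^{s_0}$, so $s_0 \in G_{s_1 x} = s_1 G_x s_1^{-1}$. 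Since $s_0$ generates the order-$p$ group $G_x$, this conjugation relation forces $s_1 \in N_G(G_x)$; but the Tarski monster structure (maximality of cyclic subgroups) gives $N_G(G_x) = G_x$, so $s_1 \in G_x$.

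The key combinatorial step is then an orbit-counting argument: iterating the above, if $V$ were infinite one produces arbitrarily many distinct elements $s_0, s_1, \dots, s_n \in G_x$ with $s_i x$ distinct points of $V$, contradicting the finiteness $|G_x| = p$. Hence $V$ is finite. A nonempty finite open set produces an isolated point of $\fb$, which contradicts Proposition \ref{prop:no-isolated-points} (valid since Tarski monsters are non-amenable, by Olshanskii). This yields topological freeness, and then C*-simplicity follows directly from the implication (4) $\Rightarrow$ (1) of Theorem \ref{thm:c-star-simplicity}.

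The main obstacle — and the genuinely Tarski-monster-specific step — is establishing $N_G(G_x) = G_x$, i.e. that the order-$p$ cyclic stabilizer is self-normalizing. This is where the defining property of Tarski monsters (every proper nontrivial subgroup is cyclic of order $p$) must be used carefully: the normalizer is a subgroup strictly containing the cyclic $G_x$, and since proper subgroups are forced to be of order $p$, a strictly larger subgroup would have to be all of $G$, which would make $G_x$ normal — impossible for the infinite simple-type structure of a Tarski monster. I would need to confirm that $G_x$ is genuinely a proper subgroup (it is, being finite while $G$ is infinite) and that no intermediate subgroup exists, so the normalizer cannot strictly contain $G_x$ without being all of $G$. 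Everything else — minimality, absence of isolated points, and the passage to C*-simplicity — is supplied by the general theory already established in the paper.
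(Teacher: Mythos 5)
Your proposal is correct and follows essentially the same route as the paper: the corollary is an immediate consequence of Theorem \ref{thm:tarski-monsters-top-free} and Theorem \ref{thm:c-star-simplicity}, and your reconstruction of the topological-freeness argument (the open set $V$ in the fixed-point set, the self-normalizing order-$p$ stabilizer forcing $V$ to be finite, and the contradiction with Proposition \ref{prop:no-isolated-points}) is exactly the paper's proof. The only point to tighten is that $G_x$ is proper not because it is finite (that would be circular) but because $G_x = G$ would, by minimality, force $\fb$ to be a single point and hence $G$ to be amenable.
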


%%%%%%%%%%%%%%%%%%%%%%%%%%%%%%%%%%%%%%%%%%%

\end{document}